\definecolor{ddorange}{rgb}{1,0.5,0}
\definecolor{ddcyan}{rgb}{0,0.2,1.0}
\newcommand{\GGG}{\color{black}}
\newcommand{\BBB}{\color{black}}
\newcommand{\EEE}{\color{black}}
\newcommand{\R}{{\mathbb R}}
\newcommand{\N}{{\mathbb N}}
\newcommand{\I}{{\mathrm{I}}}
\newcommand{\Rn}{{\R}^n}
\newcommand{\xy}{^\xi_y}
\newcommand{\xoy}{^{\xi_0}_y}
\newcommand{\Oxy}{{\Omega^\xi_y}}
\newcommand{\dx}{\, \mathrm{d} x}
\renewcommand{\dh}{\, \mathrm{d} \mathcal{H}^{n-1}}
\newcommand{\hn}{\mathcal{H}^{n-1}}
\newcommand{\Ln}{{\mathcal{L}}^n}
\newcommand{\ho}{\mathcal{H}^0}
\newcommand{\Sn}{{\mathbb{S}^{n-1}}}
\newcommand{\ol}{\overline}
\newcommand{\wt}{\widetilde}
\newcommand{\sm}{\setminus}
\newcommand{\Mnn}{{\mathbb{M}^{n\times n}_{sym}}}
\newcommand{\Mskew}{{\mathbb{M}^{n\times n}_{\rm skew}}}
\newcommand{\dod}{{\partial_D \Omega}}
\newcommand{\dom}{{\partial \Omega}}
\newcommand{\weak}{\rightharpoonup}
\newcommand{\wstar}{\stackrel{*}\rightharpoonup}
\newcommand{\mres}{\mathbin{\vrule height 1.6ex depth 0pt width
0.13ex\vrule height 0.13ex depth 0pt width 1.3ex}}
\DeclareMathOperator*{\aplim}{ap\,lim}
\theoremstyle{plain}
\theoremstyle{plain}
\newtheorem{theorem}{Theorem}[section]
\newtheorem{proposition}[theorem]{Proposition}
\newtheorem{lemma}[theorem]{Lemma}
\theoremstyle{definition}
\newtheorem{definition}[theorem]{Definition}
\theoremstyle{remark}
\newtheorem{remark}[theorem]{Remark}
\numberwithin{equation}{section}
\newcommand{\Addresses}{{
  \bigskip
  \footnotesize
(A.~Chambolle) \textsc{CEREMADE, CNRS and Université Paris-Dauphine PSL, Paris, France}
\par\nopagebreak
  \textit{E-mail address}, A.~Chambolle: \texttt{antonin.chambolle@ceremade.dauphine.fr}

\medskip
(V.~Crismale)  \textsc{Dipartimento di Matematica, Università Roma-I “La Sapienza”, 00185 Roma, Italy.}\par\nopagebreak
  \textit{E-mail address}, V.~Crismale: \texttt{vito.crismale@uniroma1.it}

}}
\title
[Equilibrium in nonhomogeneous linear elasticity with discontinuities]
{Equilibrium configurations for nonhomogeneous linearly elastic materials with surface discontinuities
}
\author{Antonin Chambolle \and Vito Crismale}
\email[Antonin Chambolle]{antonin.chambolle@ceremade.dauphine.fr}
\email[Vito Crismale]{vito.crismale@uniroma1.it}
\begin{document}
\begin{abstract}
We prove a compactness and semicontinuity result that applies to minimisation problems in nonhomogeneous linear elasticity under Dirichlet boundary conditions. This generalises a previous compactness theorem that we proved and employed to show existence of minimisers for the Dirichlet problem for the (homogeneous) Griffith energy.
\end{abstract}
\keywords{ Generalised special functions of bounded
deformation, brittle fracture, compactness.}
\subjclass[2010]{49Q20, 49J45, 26A45, 74R10, 74G65, 70G75.}

\maketitle

\setcounter{tocdepth}{1}  
\tableofcontents

\section{Introduction}
In this paper we study the minimisation of free discontinuity functionals describing energies for
linearly elastic solids with discontinuities, under Dirichlet boundary conditions.
For a solid in a given (bounded) reference configuration $\Omega \subset \Rn$, whose \emph{displacement field} with respect to the equilibrium is $u\colon \Omega \to \Rn$, the minimisation of integral functionals of the form
\begin{equation}\label{1705201919}
E(u):= \int_\Omega f(x,e(u))\dx + \int_{J_{u}} g( x, \EEE [u], \nu_u) \dh
\end{equation}
accounts for the interaction of the internal elastic energy
and the energy dissipated in the surface discontinuities.

The elastic properties of the solid are determined by 
the \emph{elastic strain} $e(u)=\tfrac12 (\nabla u + (\nabla u)^{\mathrm{T}})$, the symmetrized gradient of $u$, through a function $f$ with superlinear growth in $e(u)$ (often a quadratic form) and in general depending on the material point $x\in \Omega$. The surface term is related  to dissipative phenomena such as cracks, surface tension between different elastic phases, or internal cavities, and is concentrated on the \emph{jump set} $J_u$, representing the surface discontinuities of $u$. 
The jump set is such that
when blowing up around any $x \in J_u$, it is
approximated by a hyperplane with  normal $\nu_u(x)\in \Sn$ and the displacement field is close to two suitable distinct values $u^+(x)$, $u^-(x) \in \Rn$ on the two sides of the body with respect to this hyperplane. The jump opening, denoted by $[u]$, is then $[u](x)=u^+(x)-u^-(x)$. In order to ensure that the volume and the surface term do not interact, it is usually assumed that $g$ be greater than a positive constant, or 
some 
growth condition for small values of $[u]$
(besides the superlinear growth of $f$).
Therefore, the functionals we consider are bounded from below
 through
 the Griffith-like energy (\cite{Griffith, FraMar98})
\begin{equation}\label{1805200928}
G(u):=\int_\Omega |e(u)|^p \dx + \hn(J_u)\,, \qquad\text{with }p>1\,.
\end{equation}

The first main issue in the minimisation of energies of the type \eqref{1705201919} when also the control from above is only through \eqref{1805200928} (in particular if $g$ is independent of $[u]$) is how to obtain suitable compactness.
This is related to the lack of good a priori integrability properties for displacements with finite 
energy $G$.
In fact, a pathological situation may occur
 in the presence of connected components, well included in
 $\Omega$, whose boundary is contained (or almost completely contained) in $J_u$: this allows to modify the displacement in these internal components by adding any constant, so that arbitrarily large values of $u$ 
may be reached without  (or slightly) modifying
 the energy. 

Compactness results for sequences with equibounded 
energy \eqref{1805200928} have been obtained with increasing generality. In \cite{BelCosDM98} compactness \BBB w.r.t.\ (with respect to) \EEE strong $L^1$ convergence
is obtained assuming a uniform $L^\infty$ bound on the displacement field: 
this guarantees that the distributional symmetrized gradient $\mathrm{E}u$ is a bounded Radon measure and then $u$ belongs to the space $SBD(\Omega)$ of \emph{special functions of bounded deformation}  \cite{AmbCosDM97}, and in particular $u\in L^1(\Omega;\Rn)$.
In \cite{DM13}, {\sc Dal Maso} introduced the space of \emph{generalised special functions of bounded deformation} $GSBD(\Omega)$ (with the smaller $GSBD^p(\Omega)$, the right energy space for \eqref{1805200928}, see Section~\ref{Sec1}) and proved a compactness result under a uniform mild integrability control on sequences with bounded energy, ensuring convergence in measure.

The first compactness result for \eqref{1805200928} without further constraints is obtained by {\sc Friedrich} \cite{FriPWKorn} \emph{in dimension two}, basing on a \emph{piecewise Korn inequality}. This inequality permits to
ensure the compactness for sequences with bounded energy, up to subtracting suitable \emph{piecewise rigid motions}, namely functions coinciding with an \emph{infinitesimal rigid motion} (that is an affine function with skew-symmetric gradient) on each element of  a suitable Caccioppoli partition $\mathcal{P}=(P_j)_j$ of the domain (that is $\partial^* \mathcal{P}=\bigcup_j \partial^* P_j$ has
 finite surface measure; see \cite{ChaGiaPon07} characterising piecewise rigid motions).

In \cite{CC18} we proved in any dimension that each sequence $(u_h)_h$ with equibounded energy \eqref{1805200928} converges in measure (up to subsequences) to a $GSBD^p$ function $\ol u$, outside an exceptional set with finite perimeter $A$ where $|u_h|\to +\infty$.
 Outside the exceptional set, 
 weak $L^p$ convergence for the symmetrized gradients $(e(u_h))_h$ holds and $\hn(J_{\ol u} \cup \BBB (\partial^* A  \cap \Omega)) \EEE \leq \liminf_{h} \hn(J_{u_h})$.  The main ingredient for basic compactness \BBB w.r.t.\ \EEE the convergence in measure is the  Korn-Poincaré inequality for function with small jump set proven in \cite{CCF16}, while the semicontinuity properties are obtained through a slicing argument. In particular, this directly solves the Dirichlet minimisation problem 
 for the energy \eqref{1805200928},
 with volume term possibly convex with $p$-growth in $e(u)$, but still 
 attaining its minimum value for $e(u)=0$: starting from a minimising sequence $(u_h)_h$, a minimiser is given by any function equal to $\ol u$ in $\Omega\sm A$ and to an infinitesimal rigid motion in $A$. One may argue analogously if the minimum value of $f$ is independent of $x$.
 
However, for general nonhomogeneous materials (for instance composite materials) such that the minimum value of $f(x,\cdot)$ depends on $x$, this strategy does not work and a better characterisation of the limit behaviour also in the exceptional set is required.
A similar issue arises when employing the compactness result by {\sc Ambrosio} \cite{Amb89UMI, Amb90GSBV, AmbNew95} in the space of \emph{generalised functions of bounded variation} $GSBV$, and in its subspace $GSBV^p$ to the minimisation of energies 
\begin{equation}\label{1805201919}
\int_\Omega f(x,\nabla(u))\dx + \int_{J_{u}} g( x, \EEE [u], \nu_u) \dh
\end{equation}
depending on the full gradient $\nabla u$ in place of $e(u)$.
For this reason a compactness result in $GSBV^p$ of different type has been derived in \cite{Fri19}: for any sequence with bounded energy $(u_h)_h$ \eqref{1805201919} it is possible to find modifications $y_h$ 
such that the energy increases at most by $\frac{1}{h}$, $\Ln(\{\nabla u_h \neq \nabla y_h\})\leq \frac{1}{h}$, and $(y_h)_h$ converges in measure to some $u \in GSBV^p$. The functions $y_h$ are indeed obtained from $u_h$ by subtracting a piecewise constant function \emph{up to a set of small measure}, in the same spirit of the aforementioned
 \cite{FriPWKorn}  with piecewise rigid motions replaced by piecewise constant functions.

The present work is based on a different approach: we prove that, given $(u_h)_h$ with $\sup_h(G(u_h))<+\infty$, for suitable piecewise rigid motions $a_h$ the sequence $(u_h-a_h)_h$ converges in measure to some $u \in GSBD^p$, such that $G(u)\leq \liminf_h G(u_h)$.
Differently 
from \cite{Fri19}, we have $e(u_h)=e(u_h-a_h)$ since we subtract piecewise rigid motions (without exceptional sets of small measure); this precludes in general $u_h-a_h$ to be a minimising sequence, but nevertheless the lower semicontinuity for the surface part is obtained directly in terms of $J_{u_h}$.
Our compactness result is the following \BBB (we use notation \eqref{1305201259} for Caccioppoli partitions). \EEE

\begin{theorem}\label{thm:main}
Let $p \in (1,+\infty)$ and $\Omega\subset \Rn$ be open, bounded, and Lipschitz. For any sequence $(u_h)_h$  with  $\sup_{h} G(u_h) <+\infty$  \EEE
there exist a subsequence, not relabelled, 
 a Caccioppoli partition $\mathcal{P}=(P_j)_j$ of $\Omega$, a sequence of piecewise  rigid motions \EEE
  $(a_h)_h$ with
\begin{equation}\label{2302202219}
a_h=\sum_{j\in \N} a_h^j \chi_{P_j}\,,
\end{equation}
 and $u\in GSBD^p(\Omega)$ such that 
\begin{subequations}\label{eqs:0203200917}
\begin{equation}\label{2202201909}
|a_h^j(x)-a_h^i(x)| \to +\infty \quad \text{for }\Ln\text{-a.e.\ }x\in \Omega, \text{ for all }i\neq j\,,
\end{equation}
and
\begin{align}
u_h-a_h \to u \quad &\Ln\text{-a.e.\ in }\Omega\,, \label{2202201910}\\
e(u_h)  \rightharpoonup e(u) \quad &\text{in } L^p(\Omega; \Mnn)\,,\label{eq:convGradSym}\\
\hn(J_u \cup \BBB (\partial^* \mathcal{P} \cap \Omega) \EEE )\leq \liminf_{h\to \infty} \,&\hn(J_{u_h})\,.\label{eq:sciSalto}
\end{align}
\end{subequations}
\end{theorem}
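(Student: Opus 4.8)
The plan is to iterate the compactness result of \cite{CC18}, peeling off one ``layer'' of diverging rigid motions at a time. Applying \cite{CC18} to $(u_h)_h$ we obtain, up to a subsequence, a set of finite perimeter $A^{(1)}\subseteq\Omega$, a function $v^{(0)}\in GSBD^p(\Omega\setminus A^{(1)})$ with $u_h\to v^{(0)}$ $\Ln$-a.e.\ on $\Omega\setminus A^{(1)}$, $|u_h|\to+\infty$ $\Ln$-a.e.\ on $A^{(1)}$, $e(u_h)\weak e(v^{(0)})$ in $L^p(\Omega\setminus A^{(1)};\Mnn)$, and $\hn\big(J_{v^{(0)}}\cup(\partial^*A^{(1)}\cap\Omega)\big)\le\liminf_h\hn(J_{u_h})$. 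This yields the first piece, $P^{(0)}:=\Omega\setminus A^{(1)}$, carrying the zero rigid motion. If $\Ln(A^{(1)})=0$ we are done; otherwise we recurse on $A^{(1)}$.

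For the inductive step on the current exceptional set $A^{(k)}$ the decisive point is to extract a rigid motion $a_h^{(k)}$ such that $u_h-a_h^{(k)}$ ceases to diverge on a quantified portion of $A^{(k)}$. This is where the Korn--Poincaré inequality for functions with small jump set \cite{CCF16} enters, used as in the proof of \cite{CC18}: over a fine covering of $A^{(k)}$ by balls, on those balls on which $\hn(J_{u_h}\cap\cdot)$ does not concentrate (a controlled majority, by a pigeonhole argument over the total budget $\sup_h\hn(J_{u_h})$, after a further subsequence) one gets local rigid motions with $u_h$ minus the local rigid motion bounded in $L^{p^*}$ off an exceptional set of small measure; gluing these produces a \emph{piecewise} rigid map $a_h^{(k)}$ on $A^{(k)}$ with $\hn$ of its internal interfaces controlled by $\hn(J_{u_h}\cap A^{(k)})$, and with $u_h-a_h^{(k)}$ bounded outside a set of measure $\le\tfrac12\Ln(A^{(k)})$. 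One then sets $w_h^{(k)}:=(u_h-a_h^{(k)})\chi_{A^{(k)}}$, which belongs to $GSBD^p(\Omega)$ with $\hn(J_{w_h^{(k)}})\le\hn(J_{u_h})+\hn(\partial^*A^{(k)}\cap\Omega)+\hn(\text{interfaces of }a_h^{(k)})$ still equibounded (the added terms are bounded by the previous semicontinuity estimates) and $e(w_h^{(k)})=e(u_h)\chi_{A^{(k)}}$ bounded in $L^p$, and applies \cite{CC18} once more. Since $w_h^{(k)}\equiv 0$ off $A^{(k)}$, the new exceptional set satisfies $A^{(k+1)}\subseteq A^{(k)}$ and $\Ln(A^{(k+1)})\le\tfrac12\Ln(A^{(k)})$; the new limit $v^{(k)}$ lives on $P^{(k)}:=A^{(k)}\setminus A^{(k+1)}$, on which the rigid motion is $a_h^{(k)}$ (corrected by the constant coming from $v^{(k)}$). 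Iterating and taking a diagonal subsequence gives a countable family $\{P^{(k)}\}_k$ and a residual set $A^{(\infty)}=\bigcap_k A^{(k)}$ with $\Ln(A^{(\infty)})=0$ by the geometric decay, so $\mathcal P=(P^{(k)})_k$ is a partition of $\Omega$ up to $\Ln$-null sets; it is a Caccioppoli partition because, in the sense of \cite{CC18}, every interface created at stage $k$ is carried by (the limit of) $J_{u_h}$ restricted to $A^{(k-1)}$, hence the interfaces at different stages are essentially disjoint subsets of a single set of $\hn$-measure $\le\liminf_h\hn(J_{u_h})<+\infty$, whence $\sum_k\hn(\partial^*P^{(k)})<+\infty$.

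Set $a_h:=\sum_k a_h^{(k)}\chi_{P^{(k)}}$ (with $a_h^{(0)}=0$), where $a_h^{(k)}$ is, on the limit piece $P^{(k)}$, the rigid motion equal to the piecewise rigid map above off a set of vanishing measure; replacing the original piecewise rigid maps by these introduces an error only on sets $\omega_h$ with $\Ln(\omega_h)\to0$, so after one more subsequence $u_h-a_h\to u:=\sum_k v^{(k)}\chi_{P^{(k)}}$ $\Ln$-a.e., which is \eqref{2202201910}. Then $u\in GSBD^p(\Omega)$ by the union/gluing property of $GSBD^p$ over Caccioppoli partitions, with $J_u\subseteq\big(\bigcup_k J_{v^{(k)}}\big)\cup(\partial^*\mathcal P\cap\Omega)$ up to $\hn$-null sets. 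Since $e(u_h)=e(u_h-a_h^{(k)})$ on $P^{(k)}$, the uniform bound $\sup_h\|e(u_h)\|_{L^p(\Omega)}<+\infty$ together with the partial convergences $e(u_h)\weak e(v^{(k)})$ on each $P^{(k)}$ identifies the weak $L^p(\Omega;\Mnn)$ limit of $(e(u_h))_h$ as $e(u)$, giving \eqref{eq:convGradSym}. For \eqref{2202201909}, after merging any two pieces $P^{(i)},P^{(j)}$ for which $(a_h^{(i)}-a_h^{(j)})_h$ stays bounded on a positive-measure subset (up to a subsequence the difference then converges to a fixed rigid motion that can be absorbed into $u$ on the union), for the surviving pieces the difference is an affine map $M_h\,x+c_h$ with $\sup_h(|M_h|+|c_h|)=+\infty$, which diverges off a subspace of dimension $<n$, i.e.\ $\Ln$-a.e.

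Finally, for \eqref{eq:sciSalto} one must combine the per-stage estimates $\hn\big(J_{v^{(k)}}\cup(\partial^*A^{(k+1)}\cap A^{(k)})\big)\le\liminf_h\hn(J_{w_h^{(k)}}\cap A^{(k)})$ without double counting. Localising as in \cite{CC18} (through the one-dimensional slicing characterisation of $J_{u_h}$ and the mutual singularity of the relevant limit sets: $J_{v^{(k)}}$ sits inside $P^{(k)}$, $\partial^*\mathcal P$ lies on the interfaces, and all trace back to essentially disjoint portions of $J_{u_h}$) one sums the local contributions and obtains $\hn\big(J_u\cup(\partial^*\mathcal P\cap\Omega)\big)\le\liminf_h\hn(J_{u_h})$. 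I expect the two genuine obstacles to be: (i) the extraction of the piecewise rigid map $a_h^{(k)}$ on the merely finite-perimeter, non-Lipschitz set $A^{(k)}$, with the quantitative control on the measure recovered at each step and on the new interfaces — this is the technical core and rests on \cite{CCF16} and a careful choice of scales; and (ii) the $\hn$-additivity bookkeeping in \eqref{eq:sciSalto}, which requires tracking, across all iterations, which portion of $J_{u_h}$ is ``spent'' by each limit jump set and each interface, so that no part of the fracture budget is counted twice.
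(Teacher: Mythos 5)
Your strategy is genuinely different from the paper's: you propose iterating \cite{CC18} to peel off one diverging layer of rigid motions at a time, whereas the paper builds the whole partition in one pass (covering $\Omega$ by balls and applying the Korn-type inequality of \cite{CagChaSca20} at a scale $\eta$, grouping the local rigid motions into equivalence classes, then letting $\eta\to 0$). The iterative idea is natural and would be conceptually clean, but there is a concrete obstruction to closing the semicontinuity estimate \eqref{eq:sciSalto}, and it is the one the paper explicitly flags in Remark~\ref{rem:3103201937}: the right-hand side of the \cite{CC18} estimate cannot be localised to a non-open set. Concretely, if you apply \cite{CC18} to $w_h^{(k)}=(u_h-a_h^{(k)})\chi_{A^{(k)}}$, then $J_{w_h^{(k)}}$ contains \emph{all} of $J_{u_h}\cap(A^{(k)})^{(1)}$ plus the cut boundary $\partial^*A^{(k)}\cap\Omega$ plus the $h$-dependent interfaces of $a_h^{(k)}$; since $A^{(0)}\supset A^{(1)}\supset\cdots$ are nested, summing the resulting estimates over $k$ counts the jump measure of $u_h$ on the deeper pieces with unbounded multiplicity, and the cut-boundary and interface terms do not cancel against anything. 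Your proposed fix --- tracking which portion of $J_{u_h}$ is ``spent'' at each stage --- is precisely the hard point, and \cite{CC18} gives no such spatial decomposition of where the limit jump set comes from; one only gets a global bound. The paper sidesteps this by proving \eqref{eq:sciSalto} from scratch with a slicing argument that, on each one-dimensional slice, uses the divergence \eqref{2202201909} (propagated to slices by Lemma~\ref{le:2502200924}) to show that every point of $\partial\mathcal{P}$ on the slice must be a cluster point of the slice jump sets, yielding a single bound $\ho(J_{\widehat{u}^\xi_y}\cup\partial\mathcal{P}^\xi_y)\le\liminf\ho(J_{(\widehat{u}_m)^\xi_y})+O(\varepsilon)$ before any summation is attempted. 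Nothing in your scheme reproduces this one-shot inequality.

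Two further points are asserted but not established. First, extracting a piecewise rigid $a_h^{(k)}$ and losing only a fixed fraction $1/2$ of $\Ln(A^{(k)})$ at each stage: the Korn--Poincar\'e mechanism on a fine covering recovers all of the domain \emph{except} a set of measure $O(\eta^{n/(n-1)})$ (compare \eqref{2302202034}), and there is no $\eta$ in your framework to drive this to zero; nor is it clear how to make the unrecovered fraction small uniformly when the covering balls must stay inside the merely finite-perimeter set $A^{(k)}$ and the Korn inequalities of \cite{CCF16,CagChaSca20} are proved on balls. Applying them on balls that straddle $\partial^*A^{(k)}$ is problematic because there $u_h$ has already been shown to converge after subtracting the \emph{previous} rigid motion, which is incompatible with subtracting the new (diverging) one on the same ball. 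Second, the claim that $\mathcal{P}$ is Caccioppoli because ``interfaces at different stages are essentially disjoint subsets of a single set of finite $\hn$-measure'' again presupposes the very localisation of \cite{CC18} that does not exist. In the paper, finiteness of $\sum_j\hn(\partial^*P_j)$ is a consequence of the slicing inequality \eqref{0201181712}, not an input.

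In short: the induction is appealing, but the bookkeeping in \eqref{eq:sciSalto} is the heart of the theorem and it does not follow from iterating a global, non-localisable compactness statement. You would need an independent argument of the slicing type; once you have that, the iterative decomposition is no longer needed, because (as in the paper's Steps 1--2) the partition and rigid motions can be constructed directly.
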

The first step of the proof consists in finding a partition $\mathcal{P}$, piecewise rigid motions $a_h$, and $u$ measurable such that \eqref{2302202219}, \eqref{2202201909}, and \eqref{2202201910} hold. In doing this, a fundamental tool is a Korn inequality for functions with small jump set, proven in two dimensions in \cite[Theorem~1.2]{CFI16ARMA} and recently extended to any dimension in \cite{CagChaSca20}.
This permits, for every $\eta>0$, to recover \eqref{2302202219} and \eqref{2202201910}  in a set $\Omega^\eta\subset \Omega$, such that $\Ln(\Omega\sm \Omega^\eta)<\eta$.
Then, the so obtained sequences of infinitesimal rigid motions are regrouped 
in equivalence classes for fixed $\eta$, saying that any $(a_h^i)_h$, $(a_h^j)_h$ (depending on $\eta$) are not equivalent if and only if \eqref{2202201909} holds for $i$, $j$. Finally, we pass to $\eta\to 0$ observing that this procedure is stable when $\eta$ decreases: the objects found in correspondence to $\eta$ coincide with those found for $\eta/2$ on $\Omega^\eta \cap \Omega^{\eta/2}$.

In the second step we prove \eqref{eq:sciSalto} through a slicing  procedure.  The guiding idea is that, if \eqref{2302202219}, \eqref{2202201909}, \eqref{2202201910} hold for $n=1$ (with $\Omega$ a real interval, 
$a_h$ piecewise constant, and $(|\nabla u_h|)_h$ equibounded in $L^p$), then not only any jump point of $u$ is a cluster point for $(J_{u_h})_h$ but this holds also for any point $y\in \BBB \partial^* \mathcal{P} \cap \Omega$: \EEE in fact, by \eqref{2202201909} and \eqref{2202201910}, the functions $u_h$ assume  arbitrarily far values, as $h\to \infty$, in couple of points close to $y$ but on different sides of $\Omega\sm\{y\}$, so $u_h$ have to jump near $y$ for $h$ large.
We conclude by noticing that in view of \eqref{eq:sciSalto} the $a_h$ are indeed piecewise rigid \BBB motions, \EEE 
so 
$\sup_h G(u_h-a_h)<+\infty$ and \eqref{eq:convGradSym} follows from 
 former compactness results. 

Besides compactness, we examine the  semicontinuity properties of \BBB $E$, defined in \eqref{1705201919}. \EEE
The lower semicontinuity of the surface term has been recently established for a large class of densities in  \cite{FriPerSol20}, for sequences equibounded \BBB w.r.t.\ \EEE $G$ \BBB (defined in \eqref{1805200928}) \EEE and converging in measure (and also for functionals defined on piecewise rigid motions), providing a counterpart for the analysis of energies \eqref{1805201919} in \cite{AmbBra90-1, AmbBra90-2}.
We then assume that the surface part is lower semicontinuous \BBB w.r.t.\ \EEE the convergence in measure and move in two directions: we address
the semicontinuity properties 
both of the volume term, and of the surface term \BBB w.r.t.\ \EEE the notion of convergence from Theorem~\ref{thm:main}.
We prove the following result \BBB (see \eqref{eq: weak gsbd convergence} for the definition of weak convergence in $GSBD$ and recall \eqref{1305201259}).  \EEE

\begin{theorem}\label{thm:minimization}
Let $p \in (1,+\infty)$ and $\Omega\subset \Rn$ be open, bounded, and Lipschitz. 
Assume that 
\begin{itemize}
\item[($f_1$)] $f\colon \Omega\times\Mnn \to [0,\infty)$ be a Carathéodory function;
\item[($f_2$)] $f(x, \cdot)$ be symmetric quasi-convex for a.e.\ $x \in \Omega$;
\item[($f_3$)] for suitable $C>0$ and $\phi \in L^1(\Omega)$, it holds 
\begin{equation*}
\frac{1}{C}|\xi|^p \leq f(x,\xi) \leq \phi(x) + C(1+|\xi|^p) \quad \text{for a.e.\ }x \in \Omega \text{ and every }\xi \in \Mnn\,;
\end{equation*}
\end{itemize}
moreover assume that
\begin{itemize}
\item[($g_1$)] $g \colon  \Omega \times \EEE \Rn  \times \Sn \to [c, +\infty)$ be measurable, with $c>0$; 
\item[($g_2$)] $g(x,  y \EEE,\nu)=g(x, - y \EEE,-\nu)$ for any  $x$, \EEE $ y \EEE$, $\nu$;
 \item[($g_3$)] $g(\cdot, y, \nu)$ be continuous,
uniformly \BBB w.r.t.\ \EEE $y\in \Rn$ and $\nu \in \Sn$; \EEE
\item[($g_4$)] for  each $x \in \Omega$ $g_x= g(x, \cdot, \cdot)$ be such that for \EEE any cube $Q$ and any $v_h \to v$ weakly in $GSBD^p(Q)$
\begin{equation*}\label{0603200936}
\int_{J_v} g_{ x \EEE}([v], \nu_v) \dh \leq \liminf_{h\to \infty} \int_{J_{v_h}} g_{ x \EEE}([v_h], \nu_{v_h}) \dh \,;
\end{equation*}
\item[($g_5$)] there is  $g_\infty\colon \Omega \times \Rn \to [0,+\infty]$ such that either $g_\infty \equiv+\infty$ or $g_\infty(x,\cdot)$ is a norm for every $x\in \Omega$, and
 \begin{equation*}\label{2703201043}
\lim_{| y \EEE|\to +\infty} g( x,  y \EEE,\nu)=g_{\infty}(x, \nu) \quad\text{uniformly w.r.t.\ } x \in \Omega\text{ and } \EEE\nu \in \Sn\,.
\end{equation*} 
\end{itemize}\EEE
Then, 
for any sequence $(u_h)_h$ such that 
$\sup_{h}E(u_h) < +\infty$ 
there exist a subsequence (not relabelled), a Caccioppoli partition $\mathcal{P}$ of $\Omega$, a sequence of piecewise infinitesimal rigid motions $(a_h)_h$, and $u \in GSBD^p(\Omega)$ such that \eqref{2302202219}, \eqref{eqs:0203200917} hold and 
\begin{equation*}\label{2103201208}
\int_\Omega f(x,e(u))\dx + \int_{J_u \cap \mathcal{P}^{(1)}} g( x \EEE, [u], \nu_u) \dh + \int_{\BBB\partial^* \mathcal{P}\cap \Omega\EEE} g_\infty( x, \EEE\nu_{\mathcal{P}}) \dh \leq \liminf_{h\to \infty} E(u_h)
\end{equation*}
if $g_\infty$ is finite,
while $\hn(\BBB\partial^* \mathcal{P}\cap \Omega\EEE)=0$ 
and $E(u)  \leq \liminf_{h} E(u_h)$ if $g_\infty\equiv +\infty$.
\end{theorem}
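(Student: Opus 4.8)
\emph{Set-up.} Since $E(u_h)\geq\min\{1/C,c\}\,G(u_h)$ by $(f_3)$ and $(g_1)$, we have $\sup_hG(u_h)<+\infty$, so Theorem~\ref{thm:main} provides (after passing to a subsequence) a Caccioppoli partition $\mathcal P=(P_j)_j$, piecewise infinitesimal rigid motions $a_h=\sum_ja_h^j\chi_{P_j}$ and $u\in GSBD^p(\Omega)$ satisfying \eqref{2302202219} and \eqref{eqs:0203200917}. Since $e(a_h)=0$ $\Ln$-a.e.\ and $\hn(J_{u_h-a_h})\leq\hn(J_{u_h})+\hn(\partial^*\mathcal P\cap\Omega)$ is bounded, $u_h-a_h\weak u$ weakly in $GSBD^p(\Omega)$; we may also assume $E(u_h)\to\liminf_hE(u_h)=:L<+\infty$. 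As the two integrands in $E$ are nonnegative, the statement follows from the two separate lower semicontinuity estimates below. For the \emph{volume term}, from $e(u_h)=e(u_h-a_h)\weak e(u)$ in $L^p(\Omega;\Mnn)$, $u_h-a_h\to u$ $\Ln$-a.e., $\sup_h\hn(J_{u_h-a_h})<+\infty$ and $(f_1)$--$(f_3)$, a standard blow-up at $\Ln$-a.e.\ point (where the rescaled jump sets become negligible, so in the limit only $(f_2)$ and the $p$-growth in $(f_3)$ enter) gives $\int_\Omega f(x,e(u))\dx\leq\liminf_h\int_\Omega f(x,e(u_h))\dx$; having subtracted piecewise \emph{rigid} motions, $e(u_h)=e(u_h-a_h)$ identically, so no error appears.

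\emph{Surface term, and the measure $\mu$.} Up to a further subsequence, $\mu_h:=g(\cdot,[u_h],\nu_{u_h})\,\hn\mres J_{u_h}$ and $\lambda_h:=|e(u_h)|^p\Ln+\hn\mres J_{u_h}$ converge weakly-$*$ to finite measures $\mu,\lambda$ (their masses being bounded by $L$ and by $\sup_hG(u_h)$). It suffices to prove $\mu\geq g(\cdot,[u],\nu_u)\,\hn\mres(J_u\cap\mathcal P^{(1)})+g_\infty(\cdot,\nu_{\mathcal P})\,\hn\mres(\partial^*\mathcal P\cap\Omega)$, since then $\int_{J_u\cap\mathcal P^{(1)}}g(x,[u],\nu_u)\dh+\int_{\partial^*\mathcal P\cap\Omega}g_\infty(x,\nu_{\mathcal P})\dh\leq\mu(\Omega)\leq\liminf_h\mu_h(\Omega)=\liminf_h\int_{J_{u_h}}g(x,[u_h],\nu_{u_h})\dh$; as $J_u\cap\mathcal P^{(1)}$ and $\partial^*\mathcal P\cap\Omega$ are disjoint with mutually singular $\hn$-restrictions, the two densities of $\mu$ may be bounded separately. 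At $\hn$-a.e.\ $x_0\in J_u\cap\mathcal P^{(1)}$ let $j$ be the unique index with $x_0\in P_j^{(1)}$, and rescale $w_h^\rho(y):=(u_h-a_h^j)(x_0+\rho y)$: since $a_h^j$ is continuous, $J_{w_h^\rho}$, $[w_h^\rho]$, $\nu_{w_h^\rho}$ are the rescalings of those of $u_h$. By a diagonal argument one picks $\rho_h\to0$ with $w_h^{\rho_h}\to u_{x_0}$ in measure ($u_{x_0}$ the one-jump function with jump $[u](x_0)$ across $\{y\cdot\nu_u(x_0)=0\}$) and, crucially, with $\lambda_h(B_{C\rho_h}(x_0))=O(\rho_h^{\,n-1})$ for $C$ large, which forces $\|e(w_h^{\rho_h})\|_{L^p(B_1)}^p=\rho_h^{\,p-n}\int_{B_{\rho_h}(x_0)}|e(u_h)|^p\dx=O(\rho_h^{\,p-1})\to0$ and $\hn(J_{w_h^{\rho_h}}\cap B_1)=O(1)$, i.e.\ $w_h^{\rho_h}\weak u_{x_0}$ weakly in $GSBD^p(B_1)$. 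Freezing $x$ at $x_0$ through $(g_3)$, applying $(g_4)$ on a cube $Q$ with two faces orthogonal to $\nu_u(x_0)$ and $\overline Q\subset B_1$, rescaling back, and differentiating $\mu$ along the cube family $\{x_0+\rho Q\}$, one gets $\tfrac{d\mu}{d(\hn\mres(J_u\cap\mathcal P^{(1)}))}(x_0)\geq g(x_0,[u](x_0),\nu_u(x_0))$.

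\emph{The interface $\partial^*\mathcal P$ and the case $g_\infty\equiv+\infty$.} At $\hn$-a.e.\ $x_0\in\partial^*\mathcal P\cap\Omega$ one has $x_0\in\partial^*P_i\cap\partial^*P_j$ with $\nu_{\mathcal P}(x_0)=\nu_{P_i}(x_0)=-\nu_{P_j}(x_0)$. Since $a_h^i-a_h^j$ is a difference of infinitesimal rigid motions, its linear part is skew, hence of even rank: the super-level sets $\{|a_h^i-a_h^j|\leq M\}$, being convex with $\Ln$-measure vanishing in $\Omega$, are eventually empty or collapse onto affine subspaces of codimension $\geq2$, which carry vanishing $\hn$-mass; thus \eqref{2202201909} self-improves to $|a_h^i(x_0)-a_h^j(x_0)|\to+\infty$. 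Consequently, in a small cylinder about $x_0$ with axis $\nu_{\mathcal P}(x_0)$, $u_h$ takes on the bulk of the two pieces values differing by a quantity diverging to $+\infty$, so, refining the slicing argument used for \eqref{eq:sciSalto} (which already locates $J_{u_h}$ near $x_0$ for exactly this reason), $J_{u_h}$ must separate the two faces of the cylinder, with $\hn(J_{u_h}\cap\text{cyl.})\gtrsim\rho^{\,n-1}$ and $|[u_h]|\to+\infty$ on the relevant portion; by $(g_5)$, there $g(\cdot,[u_h],\nu_{u_h})$ differs from $g_\infty(\cdot,\nu_{u_h})$ by an error vanishing as $\rho\to0$, $h\to\infty$, and, $g_\infty(x_0,\cdot)$ being a norm, any separating hypersurface has anisotropic energy at least that of the flat cross-section, i.e.\ $g_\infty(x_0,\nu_{\mathcal P}(x_0))$ per unit area. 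Blowing up yields $\tfrac{d\mu}{d(\hn\mres(\partial^*\mathcal P\cap\Omega))}(x_0)\geq g_\infty(x_0,\nu_{\mathcal P}(x_0))$, which completes the proof when $g_\infty$ is finite. If $g_\infty\equiv+\infty$, this estimate forces $\mu(\Omega)=+\infty$ unless $\hn(\partial^*\mathcal P\cap\Omega)=0$; since $\mu(\Omega)\leq L<+\infty$ we conclude $\hn(\partial^*\mathcal P\cap\Omega)=0$, so $\mathcal P$ is trivial on each component of $\Omega$ and $a_h$ reduces to a single infinitesimal rigid motion with $\hn$-null jump set in $\Omega$, whence $e(u_h-a_h)=e(u_h)$ $\Ln$-a.e., $J_{u_h-a_h}=J_{u_h}$, $[u_h-a_h]=[u_h]$, $\nu_{u_h-a_h}=\nu_{u_h}$ and $E(u_h-a_h)=E(u_h)$; applying the volume estimate and the blow-up of the surface term to $u_h-a_h\weak u$ (now $\partial^*\mathcal P\cap\Omega$ being $\hn$-null, the lower bound concerns all of $J_u$) gives $E(u)\leq\liminf_hE(u_h)$.

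\emph{Main obstacle.} The delicate step is the analysis at $\partial^*\mathcal P$: one must propagate the merely $\Ln$-a.e.\ divergence of $|a_h^i-a_h^j|$ to $\hn$-a.e.\ point of $\partial^*\mathcal P$, and, more importantly, show that the part of $J_{u_h}$ which separates the pieces carries a jump diverging to $+\infty$ and, in the blow-up, a normal aligned with $\nu_{\mathcal P}$, so that $(g_5)$ together with the norm property of $g_\infty$ produces precisely the term $\int_{\partial^*\mathcal P\cap\Omega}g_\infty(x,\nu_{\mathcal P})\dh$ (and, in the bounded case, that this part of the energy is not ``helped'' by the overlap of $J_u$ with $\partial^*\mathcal P$). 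This runs parallel to, and refines, the slicing argument already behind \eqref{eq:sciSalto} in Theorem~\ref{thm:main}.
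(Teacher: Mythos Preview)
Your overall architecture—apply Theorem~\ref{thm:main}, then establish blow-up density bounds separately at bulk points, at $J_u\cap\mathcal P^{(1)}$, and at $\partial^*\mathcal P\cap\Omega$—matches the paper's, and your treatment of $J_u\cap\mathcal P^{(1)}$ (subtract the single relevant $a_h^j$, freeze $x$ via $(g_3)$, invoke $(g_4)$) is essentially the paper's Substep~2.2. Two parts of the sketch, however, do not close.

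\emph{The bulk term} is less ``standard'' than you suggest. In the blow-up cell the rescaled functions are still only $GSBD^p$, not $W^{1,p}$, and symmetric quasi-convexity alone does not give lower semicontinuity on non-open subsets. The paper first invokes the Korn-type inequality of \cite{CagChaSca20} (Theorem~\ref{th:KornGSBD}) to replace the blow-up by a genuine $W^{1,p}$ field outside a set of small measure, and then performs an equi-Lipschitz truncation \`a la \cite{AceFus84,Amb94,Ebo05} so that (i) Morrey's theorem can be applied on the non-open complement and (ii) the $x$-freezing via Scorza--Dragoni is legitimate on a compact range of strains. Your sketch omits both devices.

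\emph{The interface $\partial^*\mathcal P$} is where there is a genuine gap. First, the ``self-improvement'' of \eqref{2202201909} to $|a_h^i-a_h^j|(x_0)\to+\infty$ at $\hn$-a.e.\ $x_0\in\partial^*\mathcal P$ needs a further subsequence extraction (this is precisely Lemma~\ref{le:2503201833}): without it the kernels of the skew parts may rotate with $h$ and leave a fixed set of positive $\hn$-measure on which divergence fails along the full sequence. Second, and more seriously, the step ``$g_\infty(x_0,\cdot)$ being a norm, any separating hypersurface has anisotropic energy at least that of the flat cross-section'' is not available here: $J_{v_k}$ is merely a countably rectifiable set, not the reduced boundary of a finite-perimeter set, so no Reshetnyak/calibration bound applies; and you have not shown that $|[u_h]|\to+\infty$ on any identifiable $\hn$-portion of $J_{u_h}$ near $x_0$ (finitely many small jumps on each slice can equally well account for the observed divergence). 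The paper's mechanism is different and explicitly one-dimensional: pick $\xi_0$ in the countable dense set $D$ of Lemma~\ref{le:2503201833} near an optimiser of the dual-norm identity $g_\infty(x_0,\nu_0)=\sup_\xi |\nu_0\cdot\xi|/g^*_{x_0,\infty}(\xi)$, use $(g_5)$ to bound $g\ge\bigl(|\nu_{v_k}\cdot\xi_0|/g^*_{x_0,\infty}(\xi_0)-\kappa(t)\bigr)\chi_{\{|[v_k]\cdot\xi_0|>t\}}$, slice in direction $\xi_0$, and prove by contradiction (bounded slice variation versus \eqref{2603201108}) that each slice carries \emph{at least one} jump of size~$>t$. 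Integrating over $\Pi^{\xi_0}$ and sending $t\to\infty$, then $\xi_0$ to the optimiser, yields $\tfrac{d\mu}{d\hn}(x_0)\ge g_\infty(x_0,\nu_0)$. The norm hypothesis on $g_\infty$ enters only through this dual-norm identity, not through an anisotropic isoperimetric comparison.
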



The proof relies on a blow-up argument (\cite{FonMue92}).
For the bulk part we use again the result in  \cite{CagChaSca20}. Blowing up around a point $x_0 \notin J_u$, since the density of jump vanishes in $\hn$-measure, the Korn-type inequality of \cite{CagChaSca20} give that the rescaled function coincides with a  $W^{1,p}$ field up to a small set; we then combine this with an approximation through equi-Lipschitz functions, in the footsteps of \cite{AceFus84, Amb94, Ebo05}, in order to apply Morrey's Theorem \cite{Mor66} in most of the blow-up ball.

As for the surface energy concentrated on $\partial^* \mathcal{P}$, we blow-up around $x_0 \in \BBB\partial^* \mathcal{P}\cap \Omega\EEE$ to find that the rescaled function converges in measure, up to subtracting  in the two halves of the blow-up cell two different infinitesimal rigid motions whose difference diverges as $h\to +\infty$, so that 
 the jump has  arbitrarily large amplitude 
 near the middle of the cell. This allows to conclude through a slicing argument (anisotropic), \BBB which requires a suitable condition on $g_\infty$, such as 
 ($g_5$) (notice that this condition, which states that
$g_\infty$ is independent from the amplitude of the jump, is very
restrictive, however we have currently no idea of how to treat more
general cases).
  \EEE

Theorem~\ref{thm:minimization} ensures existence of solutions to
the class of minimisation problems
\begin{equation*}
\min_{(u, \mathcal{P})}  \bigg\{ \int_\Omega f(x,e(u))\dx + \int_{J_{u}\sm \partial^*\mathcal{P}} g( x \EEE, [u], \nu_u) \dh + \int_{\BBB\partial^* \mathcal{P}\cap \Omega\EEE} g_\infty( x, \EEE \nu) \dh    \bigg\}
\end{equation*}
under Dirichlet boundary condition. \BBB The further condition $\partial^* \mathcal{P}\cap \Omega\subset J_u$ may be enforced, permitting to detect the effective fractured zone by looking only at $u$. \EEE In this class of problems we minimise not only in $u$, but also on the possible partitions that may be created.
If $g_\infty\equiv +\infty$, minimising sequences converge without modifications, see Proposition~\ref{prop:minimizzazioneNormaInfinita}. The case $g(x,[u],\nu)=g_\infty(\nu)=\psi(\nu)$ corresponds to minimise an anisotropic version of \eqref{1805200928} with general nonhomogeneous bulk energy (see Proposition~\ref{prop:minimizzazioneNormaFinita}; we refer to \cite[Theorem~5.1]{CriFri20} for an anisotropic version of \eqref{1805200928} in the context of epitaxially strained materials \cite{BonCha02}).

The paper is organised as follows: in Section~\ref{Sec1} we recall basic notions and prove two lemmas on infinitesimal rigid motions. Section~\ref{Sec2} is devoted to the proof of Theorem~\ref{thm:main}. In Section~\ref{Sec3} we prove Theorem~\ref{thm:minimization} and address the Dirichlet minimisation problems.

\section{Preliminaries}\label{Sec1}
In this section we fix the notation and recall the main tools employed in this work.
\subsection{Basic notation} 
For every $x\in \Rn$ and $\varrho>0$, let $B_\varrho(x) \subset \Rn$ be the open ball with center $x$ and radius $\varrho$, and let $Q_\varrho(x) = x+(-\varrho, \varrho)^n$, $Q_\varrho^\pm(x) = Q_\varrho(x) \cap \{x \in \Rn \colon \pm x_1 >0\}$. For $\nu \in \Sn:=\{x \in \Rn \colon |x|=1\}$, we let also $Q_\varrho^\nu(x)$ the cube with ``center'' $x$, sidelength $\varrho$ and with a face in a plane orthogonal to $\nu$. We omit to write the dependence on $x$ when $x=0$.  (For $x$, $y\in \Rn$, we use the notation $x\cdot y$ for the scalar product and $|x|$ for the  Euclidean  norm.)   By ${\mathbb{M}^{n\times n}}$, ${\mathbb{M}^{n\times n}_{\rm sym}}$, and ${\mathbb{M}^{n\times n}_{\rm skew}}$ we denote the set of $n\times n$ matrices, symmetric matrices, and skew-symmetric matrices, respectively.   We write $\chi_E$ for the indicator function of any $E\subset \R^n$, which is 1 on $E$ and 0 otherwise.  If $E$ is a set of finite perimeter, we denote its essential boundary by $\partial^* E$, and by \BBB $E^{(s)}$ the set of points with density $s$ for $E$,  \EEE  see \cite[Definition 3.60]{AFP}.   
 We indicate the minimum  and maximum  value between $a, b \in \R$ by  $a \wedge b$  and $a \vee b$, respectively. 

We denote by $\Ln$ and $\mathcal{H}^k$ the $n$-dimensional Lebesgue measure and the $k$-dimensional Hausdorff measure, respectively. The $m$-dimensional Lebesgue measure of the unit ball in $\R^m$ is indicated by $\gamma_m$ for every $m \in \N$. For any locally compact subset $B  \subset \Rn$, (i.e.\ any point in $B$ has a neighborhood contained in a compact subset of $B$),
the space of bounded $\R^m$-valued Radon measures on $B$ [respectively, the space of $\R^m$-valued Radon measures on $B$] is denoted by $\mathcal{M}_b(B;\R^m)$ [resp., by $\mathcal{M}(B;\R^m)$]. If $m=1$, we write $\mathcal{M}_b(B)$ for $\mathcal{M}_b(B;\R)$, $\mathcal{M}(B)$ for $\mathcal{M}(B;\R)$, and $\mathcal{M}^+_b(B)$ for the subspace of positive measures of $\mathcal{M}_b(B)$. For every $\mu \in \mathcal{M}_b(B;\R^m)$, its total variation is denoted by $|\mu|(B)$.  Given $\Omega \subset \Rn$ open, we   use the notation  
$L^0(\Omega;\R^m)$  for  the space of $\Ln$-measurable functions $v \colon \Omega \to \R^m$, endowed with the topology of convergence in measure.

\begin{definition}
Let $E\subset \Rn$,  $v \in L^0(E;\R^m)$,   and  $x\in \Rn$ such that
\begin{equation*}
\limsup_{\varrho\to 0^+}\frac{\Ln(E\cap B_\varrho(x))}{\varrho^{n}}>0\,.
\end{equation*}
A vector $a\in \R^m$ is the \emph{approximate limit} of $v$ as $y$ tends to $x$ if for every $\varepsilon>0$ there holds
\begin{equation*}
\lim_{\varrho \to 0^+}\frac{\Ln(E \cap B_\varrho(x)\cap \{|v-a|>\varepsilon\})}{ \varrho^n  }=0\,,
\end{equation*}
and then we write
\begin{equation*}
\aplim \limits_{y\to x} v(y)=a\,.
\end{equation*}
\end{definition}

\begin{definition}
Let $U\subset \Rn$  be  open  and $v \in L^0(U;\R^m)$.    The \emph{approximate jump set} $J_v$ is the set of points $x\in U$ for which there exist $a$, $b\in \R^m$, with $a \neq b$, and $\nu\in \Sn$ such that
\begin{equation*}
\aplim\limits_{(y-x)\cdot \nu>0,\, y \to x} v(y)=a\quad\text{and}\quad \aplim\limits_{(y-x)\cdot \nu<0, \, y \to x} v(y)=b\,.
\end{equation*}
The triplet $(a,b,\nu)$ is uniquely determined up to a permutation of $(a,b)$ and a change of sign of $\nu$, and is denoted by $(v^+(x), v^-(x), \nu_v(x))$. The jump of $v$ is the function 
defined by $[v](x):=v^+(x)-v^-(x)$ for every $x\in J_v$. 
\end{definition}
 We note that  $J_v$ is a Borel set  with $\Ln(J_v)=0$, and that $[v]$ is a Borel function. 

\subsection{$BV$ and $BD$ functions}
 Let $U\subset \Rn$  be open. We say that a function $v\in L^1(U)$ is a \emph{function of bounded variation} on $U$, and we write $v\in BV(U)$, if $\mathrm{D}_i v\in \mathcal{M}_b(U)$ for  $i=1,\dots,n$,  where $\mathrm{D}v=(\mathrm{D}_1 v,\dots, \mathrm{D}_n v)$ is its distributional  derivative.  A vector-valued function $v\colon U\to \R^m$ is in $BV(U;\R^m)$ if $v_j\in BV(U)$ for every $j=1,\dots, m$.
The space $BV_{\mathrm{loc}}(U)$ is the space of $v\in L^1_{\mathrm{loc}}(U)$ such that $\mathrm{D}_i v\in \mathcal{M}(U)$ for $i=1,\dots,d$. 
If $n=1$, $v \in L^1(U)$ is a function of bounded variation if and only if its pointwise variation is finite, cf.\ \cite[Proposition~3.6, Definition~3.26, Theorem~3.27]{AFP}. 

A function $v\in L^1(U;\Rn)$ belongs to the space of \emph{functions of bounded deformation} if 
 the distribution 
$\mathrm{E}v := \frac{1}{2}((\mathrm{D}v)^T + \mathrm{D}v )$  belongs to $\mathcal{M}_b(U;\Mnn)$.
It is well known (see \cite{AmbCosDM97, Tem}) that for $v\in BD(U)$, $J_v$ is countably $(\hn, n{-}1)$ rectifiable, and that
\begin{equation*}
\mathrm{E}v=\mathrm{E}^a v+ \mathrm{E}^c v + \mathrm{E}^j v\,,
\end{equation*}
where $\mathrm{E}^a v$ is absolutely continuous \BBB w.r.t.\ \EEE $\Ln$, $\mathrm{E}^c v$ is singular \BBB w.r.t.\ \EEE $\Ln$ and such that $|\mathrm{E}^c v|(B)=0$ if $\hn(B)<\infty$, while $\mathrm{E}^j v$ is concentrated on $J_v$. The density of $\mathrm{E}^a v$ \BBB w.r.t.\ \EEE $\Ln$ is denoted by $e(v)$.

The space $SBD(U)$ is the subspace of all functions $v\in BD(U)$ such that $\mathrm{E}^c v=0$. For $p\in (1,\infty)$, we define
$SBD^p(U):=\{v\in SBD(U)\colon e(v)\in L^p(\Omega;\Mnn),\, \hn(J_v)<\infty\}$.
Analogous properties hold for $BV$,  such as  the countable rectifiability of the jump set and the decomposition of $\mathrm{D}v$.  The  spaces $SBV(U;\R^m)$ and $SBV^p(U;\R^m)$ are defined similarly, with $\nabla v$, the density of $\mathrm{D}^a v$, in place of $e(v)$.
For a complete treatment of $BV$, $SBV$ functions and $BD$, $SBD$ functions, we refer to \cite{AFP} and to \cite{Tem, AmbCosDM97, BelCosDM98, ConFocIur17ProcEdi}, respectively.

\subsection{$GBD$ functions}
The space $GBD$ of \emph{generalized functions of bounded deformation} has been introduced in \cite{DM13}. We recall its definition and main properties, referring to that paper for a general treatment and more details. Since the definition of $GBD$ is given by slicing (differently from the definition of $GBV$, cf.~\cite{Amb90GSBV, DeGioAmb88GBV}), we  first introduce some notation. Fixed $\xi \in \Sn$, we let
\begin{equation}\label{eq: vxiy2}
\Pi^\xi:=\{y\in \Rn\colon y\cdot \xi=0\},\qquad B^\xi_y:=\{t\in \R\colon y+t\xi \in B\} \ \ \ \text{ for any $y\in \Rn$ and $B\subset \Rn$}\,,
\end{equation}
and for every function $v\colon B\to  \Rn$ and $t\in B^\xi_y$, let
\begin{equation}\label{eq: vxiy}
v^\xi_y(t):=v(y+t\xi),\qquad \widehat{v}^\xi_y(t):=v^\xi_y(t)\cdot \xi\,.
\end{equation}

\begin{definition}[\cite{DM13}]
Let $\Omega\subset \Rn$ be a  bounded open set, and let  $v \in L^0(\Omega;\Rn)$.   Then $v\in GBD(\Omega)$ if there exists $\lambda_v\in \mathcal{M}^+_b(\Omega)$ such that  one of the following equivalent conditions holds true 
 for every 
$\xi \in \Sn$: 
\begin{itemize}
\item[(a)] for every $\tau \in C^1(\R)$ with $-\tfrac{1}{2}\leq \tau \leq \tfrac{1}{2}$ and $0\leq \tau'\leq 1$, the partial derivative $\mathrm{D}_\xi\big(\tau(v\cdot \xi)\big)=\mathrm{D}\big(\tau(v\cdot \xi)\big)\cdot \xi$ belongs to $\mathcal{M}_b(\Omega)$, and for every Borel set $B\subset \Omega$ 
\begin{equation*}
\big|\mathrm{D}_\xi\big(\tau(v\cdot \xi)\big)\big|(B)\leq \lambda_v(B);
\end{equation*}
\item[(b)] $\widehat{v}^\xi_y \in BV_{\mathrm{loc}}(\Omega^\xi_y)$ for $\hn$-a.e.\ $y\in \Pi^\xi$, and for every Borel set $B\subset \Omega$ 
\begin{equation*}
\int_{\Pi^\xi} \Big(\big|\mathrm{D} {\widehat{v}}_y^\xi\big|\big(B^\xi_y\setminus J^1_{{\widehat{v}}^\xi_y}\big)+ \mathcal{H}^0\big(B^\xi_y\cap J^1_{{\widehat{v}}^\xi_y}\big)\Big)\dh(y)\leq \lambda_v(B)\,,
\end{equation*}
where
$J^1_{{\widehat{u}}^\xi_y}:=\left\{t\in J_{{\widehat{u}}^\xi_y} : |[{\widehat{u}}_y^\xi]|(t) \geq 1\right\}$.
\end{itemize} 
The function $v$ belongs to $GSBD(\Omega)$ if $v\in GBD(\Omega)$ and $\widehat{v}^\xi_y \in SBV_{\mathrm{loc}}(\Omega^\xi_y)$ for 
every
$\xi \in \Sn$ and for $\hn$-a.e.\ $y\in \Pi^\xi$.
\end{definition} 
Every $v\in GBD(\Omega)$ has an \emph{approximate symmetric gradient} $e(v)\in L^1(\Omega;\Mnn)$ such that for every $\xi \in \Sn$ and $\hn$-a.e.\ $y\in\Pi^\xi$ there holds
\begin{equation}\label{3105171927}
 e(v)(y +  t\xi)   \xi   \cdot \xi= (\widehat{v}^\xi_y)'(t)  \quad\text{for } \mathcal{L}^1\text{-a.e.\ }   t \in  \Omega^\xi_y\,;
\end{equation}
the \emph{approximate jump set} $J_v$ is still countably $(\hn,n{-}1)$-rectifiable (\textit{cf.}~\cite[Theorem~6.2]{DM13}) and may be reconstructed from its slices through the identity
\begin{equation}\label{slicingSaltoGSBD}
(J^\xi_v)^\xi_y= J_{\widehat{v}^\xi_y} \quad\text{and}\quad v^\pm(y+t\xi)  \cdot \xi = (\widehat{v}^\xi_y)^\pm(t) \ \text{ for }t\in (J_v)^\xi_y\,,
\end{equation}
where $J^\xi_v:=\{x\in J_v \colon [v]\cdot \xi \neq 0\}$ (it holds that $\hn(J_v\sm J_v^\xi)=0$ for $\hn$-a.e.\ $\xi \in \Sn$).
It follows that, if 
$v \in GSBD(\Omega)$ with $\hn(J_v)<+\infty$, for every Borel set $B\subset \Omega$
\begin{equation}\label{0101182248}
\hn(J_v\cap B)=(2 \gamma_{n-1})^{-1} \int\limits_{\Sn} \bigg(\int \limits_{\Pi^\xi} \ho(J_{v\xy}\cap B\xy)\,\mathrm{d}\hn(y)\bigg)\,\mathrm{d}\hn(\xi)
\end{equation}
and the two conditions in the definition of $GSBD$ for $v$ hold for $\lambda_v \in \mathcal{M}_b^+(\Omega)$ such that
\begin{equation}\label{0201181308}
\lambda_v(B)\leq \int_B |e(v)| \dx + \hn(J_v \cap B)\quad\text{for every Borel set $B\subset \Omega$}\,.
\end{equation} 

 For any countably $(\hn,n{-}1)$-rectifiable set $M \subset \Omega$ with unit normal $\nu \colon M \to \Sn$, it holds that for $\hn$-a.e.\ $x\in M$ there exist the \emph{traces} $v_M^+(x)$, $v_M^-(x)\in \Rn$ such that
\begin{equation}\label{0106172148}
\aplim \limits_{\pm(y-x)\cdot \nu(x)>0, \, y\to x} \hspace{-1em} v(y)=v_M^{\pm}(x)
\end{equation}
and they can be reconstructed from the traces of the one-dimensional slices. This has been proven by \cite[Theorem~5.2]{DM13} for $C^1$ manifolds of dimension $n{-}1$, and may be extended to countably $(\hn,n{-}1)$-rectifiable sets arguing as in \cite[Proposition~4.1, Step~2]{Bab15}.


 Finally, if $\Omega$ has Lipschitz boundary, for each $v\in GBD(\Omega)$ the traces on $\partial \Omega$ are well defined in the sense that for $\mathcal{H}^{n-1}$-a.e.\ $x \in \partial\Omega$ there exists ${\rm tr}(v)(x) \in \R^n$ such that
$$\aplim\limits_{y \to x, \ y \in \Omega} v(y) = {\rm tr}(v)(x). $$
 For $1 < p < \infty$, the   space $GSBD^p(\Omega)$ is  defined by  
\begin{equation*}
GSBD^p(\Omega):=\{v\in GSBD(\Omega)\colon e(v)\in L^p(\Omega;\Mnn),\, \hn(J_v)<\infty\}\,.
\end{equation*}
We say that a sequence $(v_k)_k \subset GSBD^p(\Omega)$ \emph{converges weakly} to $v \in GSBD^p(\Omega)$ if 
\begin{align}\label{eq: weak gsbd convergence}
 \sup_{k\in \N} \big( \Vert e(u_k) \Vert_{L^p(\Omega)} + \mathcal{H}^{n-1}(J_{u_k})\big) < + \infty \ \ \ \text{and} \ \ \    u_k \to u  \text{ in } L^0(\Omega;\Rn)\,.
 \end{align}
 We say that $(v_k)_k$ \emph{is bounded in } $GSBD^p(\Omega)$ if $\sup_{k} \big( \Vert e(u_k) \Vert_{L^p(\Omega)} + \mathcal{H}^{n-1}(J_{u_k})\big) < + \infty$.

We recall the following approximate Korn-type inequality for $GSBD^p$ functions with small jump set in a ball, recently proven in \cite{CagChaSca20}.  (We fix the case $\varepsilon=1$ in that result.) \EEE
 We refer to \cite{FriPWKorn} and \cite{CFI16ARMA} for Korn-type inequalities in $GSBD^p$ in two dimensions. 
\begin{theorem}[\cite{CagChaSca20}, Theorem~3.2]\label{th:KornGSBD}
Let $n \in \N$ with $n\geq 2$, and let $p\in (1,+\infty)$. Given $\sigma \in (0,1)$ there exist $C=C(n,p)$
and $\eta=\eta(n,p,\sigma)$, 
such that for every $\varrho>0$, $v\in GSBD^p(B_\varrho)$ with $\hn(J_v)\leq \eta \varrho^{n-1}$ there exist $w \in GSBD^p(B_\varrho)$ and a set of finite perimeter $\omega\subset B_\varrho$ such that $w=v$ in $B_\varrho\sm \omega$, $\hn(\partial^* \omega)<C \hn(J_v)$, $w \in W^{1,p}(B_{\sigma \varrho};\Rn)$, and
\begin{equation*}
\int_{B_\varrho} |e(w)|^p \dx \leq 2 \int_{B_\varrho} |e(v)|^p \dx\,,\qquad \hn(J_w) \leq \hn(J_v)\,.
\end{equation*}
\end{theorem}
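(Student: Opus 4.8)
The plan is to prove the statement in three moves: a scaling reduction, the extraction of a single infinitesimal rigid motion valid outside a small set via the known Korn--Poincar\'e inequality for small jump sets, and a covering/gluing construction of $w$ that removes the jump inside $B_{\sigma\varrho}$ while only adding a vanishing fraction of symmetric-gradient energy.

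First I would rescale by $x\mapsto\varrho x$ to reduce to $\varrho=1$: every quantity appearing in the statement is $(n-1)$- or $n$-homogeneous, so $\eta$ and $C$ come out $\varrho$-independent. I would dispose of $n=1$ separately, where $\ho(J_v)\le\eta<1$ forces $J_v=\emptyset$ and hence $v\in W^{1,p}$ with nothing to prove; so from here on $n\ge 2$.

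Next I would apply the Korn--Poincar\'e inequality for $GSBD^p$ functions with small jump set of \cite{CCF16}: for $\hn(J_v)\le\eta$ small it yields a set $\omega_0\subset B_1$ with $\hn(\partial^*\omega_0)\le C\hn(J_v)$ and $\Ln(\omega_0)\le C\hn(J_v)^{n/(n-1)}$, together with an infinitesimal rigid motion $a$ such that $v-a$ is controlled in $L^p(B_1\setminus\omega_0)$ (indeed in $L^{p^*}$) by $\|e(v)\|_{L^p(B_1)}$; replacing $v$ by $v-a$ changes neither $e(v)$ nor $J_v$. Since $J_v$ is $\hn$-rectifiable with $\hn(J_v)\le\eta$, I would then cover it by a finite family of balls $\{B_{r_i}(x_i)\}$ with $\sum_i r_i^{n-1}\le C\hn(J_v)$ and with relative jump $\hn(J_v\cap B_{r_i}(x_i))\,r_i^{1-n}$ below the Korn--Poincar\'e threshold, set $\omega:=\omega_0\cup\bigcup_i B_{r_i}(x_i)$ (still with $\hn(\partial^*\omega)\le C\hn(J_v)$), and on each $B_{r_i}(x_i)$ invoke the previous step in that ball to get a rigid motion $a_i$ approximating $v$ outside a subset of small relative measure. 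I would define $w:=v$ on $B_1\setminus\omega$ and, on $\omega$, glue the $a_i$ (and $a=0$ on $\omega_0$) by a partition of unity subordinate to the covering, arranged so that $w\in W^{1,p}(B_\sigma)$: on $B_1\setminus\omega$ the function $v$, having no jump on an open set, already lies in $W^{1,p}_{\mathrm{loc}}$ there, and the gluing heals the jumps across $\partial^*\omega\cap B_\sigma$. Only jumps are removed, so $\hn(J_w)\le\hn(J_v)$; and summing $\int_{B_{r_i}(x_i)}|e(w)|^p\le C\|e(v)\|_{L^p(B_{r_i}(x_i))}^p$ over a covering of bounded overlap gives $\int_\omega|e(w)|^p\le\kappa(\eta)\,\|e(v)\|_{L^p(B_1)}^p$ with $\kappa(\eta)\to 0$ as $\eta\to 0$, so fixing $\eta=\eta(n,p,\sigma)$ with $\kappa(\eta)\le 1$ yields $\int_{B_1}|e(w)|^p\le 2\int_{B_1}|e(v)|^p$.

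The hard part will be this covering/gluing step: one must choose the covering scale and the cutoffs so that the ball multiplicities stay bounded by a dimensional constant, the transition regions where the cutoffs vary carry only a $\kappa(\eta)$-fraction of the symmetric energy, and the enlarged bad set $\omega$ keeps perimeter $\le C(n,p)\hn(J_v)$ independently of how badly $J_v$ may disconnect the ball --- all uniformly as $\hn(J_v)\to 0$. This interplay between the smallness of $J_v$ in measure and its otherwise arbitrary geometry is the technical core of \cite{CagChaSca20}. A soft contradiction--compactness argument (normalise $\|e(v_k)\|_{L^p(B_1)}=1$, let $\hn(J_{v_k})\to 0$, extract a limit through the $GSBD$ compactness of \cite{CC18}, and note its jump set is $\hn$-null so that it is a $W^{1,p}$ field by the classical Korn inequality) does give a qualitative version of the statement, but neither the sharp constant $2$ nor the structural bounds on $\omega$ and $J_w$, so the quantitative argument above cannot be bypassed.
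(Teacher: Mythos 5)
This statement is quoted verbatim from \cite{CagChaSca20} and is not proved in the present paper; the authors merely invoke it (with $\varepsilon=1$ fixed in the energy bound). There is therefore no ``paper's own proof'' to compare against, and your proposal is an attempt to reconstruct the argument of the external reference.

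Your outline identifies the right ingredients (scaling, the Korn--Poincar\'e inequality of \cite{CCF16} on small balls, a covering of $J_v$ with controlled $\sum_i r_i^{n-1}$, and a gluing of local rigid motions), and you are right to single out the covering/gluing step as the core difficulty. But as written there is a genuine gap precisely at the point you flag, and it is not merely a matter of bookkeeping. First, each application of the Korn--Poincar\'e inequality on a ball $B_{r_i}(x_i)$ produces its \emph{own} exceptional set $\omega_i$ on which $v-a_i$ is not controlled in $L^{p^*}$; on $\omega_i$ the cutoff term $\nabla\phi_j\odot(v-a_i)$ in $e(w)$ is unbounded, so you cannot glue directly there and must iterate the construction inside each $\omega_i$, with no obvious termination and with the perimeter and measure estimates degrading at each step. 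Second, even ignoring the exceptional sets, the estimate $\|(v-a_i)/r_i\|_{L^p(B_{r_i})}\lesssim\|e(v)\|_{L^p(B_{r_i})}$ that you get from $L^{p^*}$ control and H\"older has the \emph{same} order in $r_i$ on both sides; summing over a bounded-overlap covering gives $\int_\omega|e(w)|^p\le C\int_{B_1}|e(v)|^p$ with a dimensional constant $C$, not $\kappa(\eta)\int_{B_1}|e(v)|^p$ with $\kappa(\eta)\to 0$. So your claimed $\kappa(\eta)$ factor (which is what would produce the constant $2$, or more generally the sharp $1+\varepsilon$ of the original theorem) is unjustified; a naive partition-of-unity gluing of rigid motions loses a fixed multiplicative constant in the energy, which is incompatible with the stated bound. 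Closing this requires either a fundamentally different construction of $w$ on $\omega$ (one that reuses $v$ itself rather than rigid motions wherever possible, so that the energy increment is localized to a set of vanishing measure) or an iterative scheme with summable errors, and either is the actual technical content of \cite{CagChaSca20}. Your soft compactness remark is correct but, as you note, only gives a qualitative statement without the structural bounds on $\omega$, $J_w$, and the energy constant.
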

Employing this result, in \cite{CagChaSca20} it is proven that any function $v\in GSBD^p(\Omega)$ is approximately differentiable $\Ln$-a.e.\ in $\Omega$, that is for $\Ln$-a.e.\ $x \in \Omega$ there exists $\nabla v(x) \in \mathbb{M}^{n\times n}$ (such that $e(v)(x)= (\nabla v(x))^{\mathrm{sym}}$ for a.e. $x$) for which it holds
\begin{equation*}
 \aplim_{y\to x}  \frac{|v(y)-v(x)- \nabla v(x) (y-x)|}{|y-x|} =0\,. 
\end{equation*}
 \subsection{Caccioppoli partitions.} A partition $\mathcal{P}=(P_j)_{j}$ of an open set $U\subset \Rn$ is said a \emph{Caccioppoli partition} of $U$ if $\sum_{j\in \N}\partial^* P_j < +\infty$. (see \cite[Definition~4.16]{AFP}). 
For Caccioppoli partitions the following structure theorem holds. 
\begin{theorem}[\cite{AFP}, Theorem~4.17]\label{thm:Caccioppoli}
Let $(P_j)_{j}$ be a Caccioppoli partition of $U$. Then
\begin{equation*}
\bigcup_{j\in \N} P_j^{(1)} \cup \bigcup_{i\neq j} (\partial^* P_i \cap \partial^* P_j)
\end{equation*}
contains $\hn$-almost all of $U$.
\end{theorem}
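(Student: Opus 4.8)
The plan is to deduce the statement from the De Giorgi--Federer structure theorem for sets of finite perimeter together with an elementary mass balance. For a set of finite perimeter $E\subset U$ and $x\in U$ write $\Theta(x;E):=\lim_{\varrho\to0^+}\Ln(E\cap B_\varrho(x))/\Ln(B_\varrho(x))$ whenever this limit exists. By Federer's theorem, for each $j$ the density $\Theta(x;P_j)$ exists and lies in $\{0,\tfrac12,1\}$ for $\hn$-a.e.\ $x\in U$; taking the union over $j\in\N$ of the corresponding exceptional sets produces an $\hn$-null set $N\subset U$ such that, for $x\in U\setminus N$, every $\Theta(x;P_j)$ lies in $\{0,\tfrac12,1\}$, and then $x\in P_j^{(1)}\Leftrightarrow\Theta(x;P_j)=1$ and $x\in\partial^* P_j\Leftrightarrow\Theta(x;P_j)=\tfrac12$ (since $\partial^* P_j=U\setminus(P_j^{(0)}\cup P_j^{(1)})$). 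The whole statement then reduces to
\[
\sum_{j\in\N}\Theta(x;P_j)=1\qquad\text{for }\hn\text{-a.e.\ }x\in U ,
\]
because for such $x$ the half-integers $\Theta(x;P_j)$ sum to $1$, so either one of them equals $1$ and the rest vanish (hence $x\in P_j^{(1)}$), or they are all $0$ or $\tfrac12$ and exactly two equal $\tfrac12$ (hence $x\in\partial^* P_i\cap\partial^* P_j$).

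The inequality ``$\le 1$'' is immediate: since $\Ln(P_i\cap P_j)=0$ for $i\ne j$, for every finite $F\subset\N$ one has $\sum_{j\in F}\Ln(P_j\cap B_\varrho(x))\le\Ln(B_\varrho(x))$, and dividing by $\Ln(B_\varrho(x))$ and letting $\varrho\to0^+$ gives $\sum_{j}\Theta(x;P_j)\le1$ for $x\notin N$. So it remains to show that the ``bad'' set $B:=\{x\in U\setminus N:\sum_j\Theta(x;P_j)<1\}$ is $\hn$-null. At each $x\in B$ only finitely many of the half-integers $\Theta(x;P_j)$ are nonzero, so $B=\bigcup_{m,l\in\N}B_{m,l}$ with $B_{m,l}:=\{x\in B:\Theta(x;P_j)=0\ \forall j>m,\ \sum_j\Theta(x;P_j)\le1-\tfrac1l\}$, and it suffices to prove $\hn(B_{m,l})=0$ for each fixed $m,l$.

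This is the core of the argument and the step I expect to be the main obstacle: a naive use of the relative isoperimetric inequality against the full boundary measure $\sum_j\hn\mres\partial^* P_j$ would only yield a density lower bound with a \emph{fixed} constant at points of $B$, hence merely $\hn(B)<+\infty$; to obtain $\hn(B)=0$ the isoperimetric lower bound must instead be extracted from the \emph{tails} of the partition, whose total perimeter vanishes. Fix $m,l$ and $J\ge m$, set $R_J:=\bigcup_{j>J}P_j$ (a set of finite perimeter with $P(R_J,U)\le\sum_{j>J}P(P_j,U)=:\varepsilon_J\to0$ by the Caccioppoli condition) and $\sigma_J:=\sum_{j>J}\hn\mres(\partial^* P_j\cap U)$, so $\sigma_J(U)=\varepsilon_J$. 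For $x\in B_{m,l}$, $\Theta(x;R_J)=1-\sum_{j\le J}\Theta(x;P_j)\ge\tfrac1l$. I claim there is $c(n,l)>0$, independent of $J$, with
\[
\limsup_{r\to0^+}\ \sigma_J(B_r(x))\,r^{-(n-1)}\ \ge\ c(n,l)\qquad\text{for every }x\in B_{m,l}.
\]
Indeed, fix $\varrho>0$ small enough that $B_\varrho(x)\subset U$ and $\Ln(R_J\cap B_\varrho(x))\ge\tfrac1{2l}\Ln(B_\varrho(x))$, and recall $R_J\cap B_\varrho(x)=\bigcup_{j>J}(P_j\cap B_\varrho(x))$ up to $\Ln$-null sets. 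If no $P_j$ with $j>J$ fills more than half of $B_\varrho(x)$, then the relative isoperimetric inequality in $B_\varrho(x)$ gives $\Ln(P_j\cap B_\varrho(x))^{(n-1)/n}\le C(n)\,P(P_j,B_\varrho(x))$ for each such $j$; summing over $j>J$ and using the subadditivity of $t\mapsto t^{(n-1)/n}$ one gets $\sigma_J(B_\varrho(x))\ge C(n)^{-1}\big(\tfrac1{2l}\gamma_n\big)^{(n-1)/n}\varrho^{n-1}$. Otherwise some $P_{j_1}$ with $j_1>J\ge m$ fills more than half of $B_\varrho(x)$; since then $\Theta(x;P_{j_1})=0$ and $t\mapsto\Ln(P_{j_1}\cap B_t(x))/\Ln(B_t(x))$ is continuous on $(0,\varrho]$ with limit $0$, there is $\varrho'\in(0,\varrho)$ where this ratio equals $\tfrac12$, and the relative isoperimetric inequality in $B_{\varrho'}(x)$ gives $\sigma_J(B_{\varrho'}(x))\ge P(P_{j_1},B_{\varrho'}(x))\ge C(n)^{-1}\big(\tfrac12\gamma_n\big)^{(n-1)/n}(\varrho')^{n-1}$. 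In both cases a radius $r\le\varrho$ with $\sigma_J(B_r(x))\ge c(n,l)r^{n-1}$ has been produced, which proves the claim on letting $\varrho\to0^+$.

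The claim together with the standard comparison lemma between a finite measure and $\hn$ (e.g.\ \cite[Theorem~2.56]{AFP}) gives $\hn(B_{m,l})\le C(n,l)\,\sigma_J(U)=C(n,l)\,\varepsilon_J$ for every $J\ge m$; letting $J\to\infty$ forces $\hn(B_{m,l})=0$, and summing over $m,l$ gives $\hn(B)=0$, i.e.\ the displayed identity of the first paragraph. Combined with the reduction there, this proves the theorem. The auxiliary facts invoked — Federer's structure theorem, the relative isoperimetric inequality in a ball, and the density--$\hn$ comparison lemma — are classical and would be quoted from \cite{AFP}.
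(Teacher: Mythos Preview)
The paper does not prove this theorem: it is stated as a citation of \cite[Theorem~4.17]{AFP} and used as a black box, so there is no in-paper proof to compare against.

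Your argument is correct and is, in spirit, the standard one. The reduction to $\sum_j\Theta(x;P_j)=1$ for $\hn$-a.e.\ $x$ via Federer's $\{0,\tfrac12,1\}$-alternative is exactly right, and the hard direction is handled properly. The crucial point you correctly identified is that one must apply the relative isoperimetric inequality to the \emph{tails} $R_J=\bigcup_{j>J}P_j$, whose total perimeter $\varepsilon_J$ tends to~$0$, rather than to the full partition; your dichotomy (either all $P_j\cap B_\varrho$ with $j>J$ are at most half the ball, or one exceeds half and then a smaller radius with density exactly $\tfrac12$ exists by continuity) gives a uniform lower density bound $\sigma_J(B_r(x))\ge c\,r^{n-1}$ at every bad point, and the density--$\hn$ comparison lemma then forces $\hn(B_{m,l})\le C\varepsilon_J\to0$. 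Two cosmetic remarks: (i) since each nonzero $\Theta(x;P_j)$ is at least $\tfrac12$ and their sum is $<1$, at most one can be nonzero, so $B=\bigcup_m B_{m,2}$ already; (ii) in Case~1 the chain you use is $\sigma_J(B_\varrho)\ge C^{-1}\sum_{j>J}\Ln(P_j\cap B_\varrho)^{(n-1)/n}\ge C^{-1}\big(\sum_{j>J}\Ln(P_j\cap B_\varrho)\big)^{(n-1)/n}$, the last inequality by concavity/subadditivity of $t\mapsto t^{(n-1)/n}$---your invocation of subadditivity is in the right direction. The proof in \cite{AFP} follows the same architecture (Federer reduction, tail estimate via relative isoperimetric, density comparison), so your write-up is a faithful reconstruction of the reference the paper cites.
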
 
For any Caccioppoli partition $\mathcal{P}=(P_j)_{j}$ we  set
\begin{equation}\label{1305201259}
\partial^* \mathcal{P}:= \bigcup_{j \in \N} \partial^* P_j,\qquad \mathcal{P}^{(1)}:=\bigcup_{j\in \N} P_j^{(1)},\qquad \BBB \nu_{\mathcal{P}}(x):=\nu_{P_j}(x) \quad\text{for }x\in \partial^* P_j \sm \bigcup_{i<j}\partial^* P_i\,. \EEE
\end{equation}
\EEE
\subsection{Symmetric quasi-convexity.} We recall the definition of symmetric quasi-convex functions, introduced in \cite{Ebo01}.
\begin{definition}[\cite{Ebo01}]\label{def:symquasiconvexity}
A function $f\colon \Mnn \to [0,+\infty)$ is \emph{symmetric quasi-convex} if
\begin{equation*}
f(\xi) \leq \frac{1}{\Ln(D)}\int_D f(\xi + e(\varphi)(x)) \dx
\end{equation*}
for every bounded open set $D$ of $\Rn$, for every $\varphi\in W^{1,\infty}_0(D;\Rn)$, and for every $\xi \in \Mnn$.
\end{definition}
This property is related to the \emph{quasi-convexity} in the sense of Morrey \cite{Mor66}; indeed $f$ is symmetric quasi-convex if and only if $f\circ \pi$ is quasi-convex in the sense of Morrey, where $\pi$ denotes the projection of $\mathbb{M}^{n{\times}n}$ onto $\Mnn$ (see \cite[Remark~2.3]{Ebo01}).

 
 \subsection{Some lemmas on affine functions.} We present below \BBB three \EEE lemmas that will be useful in the slicing procedure in Theorems~\ref{thm:main} and \ref{thm:minimization}. 
  We say that $a\colon \R^d \to \R^d$ is an \emph{infinitesimal rigid motion} if $a$ is affine with $ e(a) = \frac{1}{2}  ( \nabla a + (\nabla a)^{\mathrm{T}})  = 0$.   \EEE
 \begin{lemma}\label{le:2502200924}
Let $(a_h)_h$ be a sequence of \BBB piecewise \EEE rigid motions such that \eqref{2302202219} and \eqref{2202201909} hold. Then 
for $\hn$-a.e.\ $\xi \in \Sn$
\begin{equation}\label{2502200925}
|(a_h^j-a_h^i)(x)\cdot \xi| \to +\infty \quad\text{as }h\to +\infty \quad \text{for }\Ln\text{-a.e.\ }x\in \Omega, \text{ for all }i\neq j\,,
\end{equation}
\end{lemma}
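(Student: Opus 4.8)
The plan is to reduce \eqref{2502200925} to a compactness argument for affine maps with skew-symmetric gradient: after a suitable normalisation, a diverging such map converges, along a subsequence, to a single nonzero affine profile, and the set of directions annihilating that profile is $\hn$-negligible.

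First I would fix a pair $i\neq j$ (there are only countably many), set $b_h:=a_h^j-a_h^i$, and look for an $\hn$-negligible set $N_{ij}\subset\Sn$ such that $|b_h(x)\cdot\xi|\to+\infty$ for $\Ln$-a.e.\ $x\in\Omega$ whenever $\xi\in\Sn\setminus N_{ij}$; a diagonal argument over the pairs then produces a single subsequence along which this holds simultaneously for all pairs, with common $\hn$-negligible exceptional set $\bigcup_{i\neq j}N_{ij}$. Each $b_h$ is the difference of two infinitesimal rigid motions, hence affine with skew-symmetric gradient, $b_h(x)=q_h+W_hx$ with $W_h\in\Mskew$, and by \eqref{2202201909} we have $|b_h(x)|\to+\infty$ for $\Ln$-a.e.\ $x\in\Omega$; by Fatou's lemma this forces $\|b_h\|_{L^1(\Omega;\Rn)}\to+\infty$. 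Normalising $\tilde b_h:=\|b_h\|_{L^1(\Omega;\Rn)}^{-1}b_h$ produces affine maps with skew gradient and unit $L^1$ norm, and since such maps form a finite-dimensional space on which all norms are equivalent (and $\Omega$ is bounded), up to a subsequence (not relabelled) $\tilde b_h\to\tilde b$ uniformly on $\overline{\Omega}$, with $\tilde b(x)=q+Wx$, $W\in\Mskew$, and $\tilde b\not\equiv 0$.

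Next I would fix $\xi\in\Sn$ for which the affine function $x\mapsto\tilde b(x)\cdot\xi=q\cdot\xi+x\cdot(W^{\mathrm T}\xi)$ is not identically zero on $\Omega$. Then its zero set is contained in a hyperplane, hence $\Ln$-negligible, and for every $\delta>0$, writing $\Omega_\delta:=\{x\in\Omega:|\tilde b(x)\cdot\xi|>\delta\}$, for all $h$ large enough that $\|\tilde b_h-\tilde b\|_{L^\infty(\Omega)}<\delta/2$ one has
\[
|b_h(x)\cdot\xi|=\|b_h\|_{L^1(\Omega;\Rn)}\,|\tilde b_h(x)\cdot\xi|\geq\tfrac{\delta}{2}\,\|b_h\|_{L^1(\Omega;\Rn)}\qquad\text{for every }x\in\Omega_\delta\,.
\]
Since $\|b_h\|_{L^1(\Omega;\Rn)}\to+\infty$, this gives $|b_h(\cdot)\cdot\xi|\to+\infty$ uniformly on $\Omega_\delta$, and letting $\delta\downarrow 0$ yields $|b_h(x)\cdot\xi|\to+\infty$ for $\Ln$-a.e.\ $x\in\Omega$. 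Hence one may take $N_{ij}:=\{\xi\in\Sn:q\cdot\xi=0\text{ and }W\xi=0\}$, which is $\hn$-negligible: since $\tilde b\not\equiv 0$, either $q\neq 0$, and then $N_{ij}\subset q^\perp\cap\Sn$, or $W\neq 0$, and then, a nonzero real skew-symmetric matrix having even rank, $\ker W$ has dimension at most $n-2$ and $N_{ij}\subset\ker W\cap\Sn$; in both cases $N_{ij}$ is contained in a proper subsphere of $\Sn$.

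The step I expect to be the main obstacle is the passage to a subsequence: both the limit profile $\tilde b$ and the exceptional set $N_{ij}$ depend on the chosen subsequence, and a priori different subsequences produce different $\hn$-negligible exceptional sets whose union need not be $\hn$-negligible; thus one should not expect \eqref{2502200925} for the whole sequence without extra information, but only along a subsequence, which is precisely the setting of Theorem~\ref{thm:main} (where a subsequence is extracted in any case). A purely quantitative variant, again only subsequential, would instead start from the averaging bound $\int_{\Sn}(1+|b_h(x)\cdot\xi|)^{-1}\dh(\xi)\le C_n\,(1+|b_h(x)|)^{-1}\log(2+|b_h(x)|)\to 0$ for $\Ln$-a.e.\ $x$, which after integration in $x$, dominated convergence, and Fubini gives $\int_\Omega(1+|b_h(x)\cdot\xi|)^{-1}\dx\to 0$ for $\hn$-a.e.\ $\xi$ along a subsequence.
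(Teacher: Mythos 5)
The paper's own proof of this lemma is a one-line citation of \cite[Lemma~2.7]{CC18}, applied to each fixed difference $v_h=a_h^j-a_h^i$, followed by a countable union over pairs. Your argument is a self-contained reconstruction of the underlying fact, and the mechanism you use is the right one: normalise $b_h=a_h^j-a_h^i$ in $L^1$, exploit that affine maps with skew-symmetric gradient form a finite-dimensional space, so the normalised sequence is precompact in $C(\ol\Omega;\Rn)$ and has a nonzero limit $\tilde b(x)=q+Wx$, and observe that $\{\xi\in\Sn:q\cdot\xi=0,\ W\xi=0\}$ is $\hn$-null because either $q\neq 0$ or $W$ is a nonzero skew-symmetric matrix and hence has rank at least two. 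This is precisely the mechanism the paper itself deploys a few lines later, in Lemmas~\ref{le:2503201833} and~\ref{le:0108211842}, so your argument reconstructs the same tool rather than taking a genuinely different route.

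The subsequence issue you flag at the end is the only delicate point, and it is genuine. As a test case take $n=2$, $a_h^j-a_h^i\equiv q_h:=h(\cos\theta_h,\sin\theta_h)$, with the angles $(\theta_h)_{h\in[2^k,2^{k+1})}$ running through $2^k$ equispaced points of the circle: then $|q_h|=h\to+\infty$, so \eqref{2202201909} holds, yet for \emph{every} $\xi\in\Sn$ each dyadic block contains an index $h$ with $|q_h\cdot\xi|=h|\cos(\theta_h-\varphi)|\le 2\pi$ (where $\xi=(\cos\varphi,\sin\varphi)$), so $\liminf_h|q_h\cdot\xi|<+\infty$ for every $\xi$. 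Hence some extraction, or equivalently the extra assumption that the normalised differences converge along the whole sequence, cannot be dispensed with. What reconciles this with the application is that Lemma~\ref{le:2502200924} is invoked in Step~2 of the proof of Theorem~\ref{thm:main} only \emph{after} Step~1 has, via repeated use of Lemma~\ref{le:0108211842}, already passed to a subsequence of $(u_h)_h$ along which the normalised pairwise differences $\big((A_h^{i,j},b_h^{i,j})/\mu_h^{i,j}\big)_h$ converge; under that standing structure your compactness argument gives the conclusion for the whole (already-extracted) sequence with no further passage. So your proposal captures the correct content, and your instinct that the lemma should either carry a normalised-convergence hypothesis or allow a subsequence is sound; your closing quantitative variant is subsequential for the same reason and does not circumvent the issue.
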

\begin{proof}
For fixed $i \in \N$, $j \in \N$, with $i \neq j$, \eqref{2502200925} follows from \cite[Lemma~2.7]{CC18} applied to $v_h=a_h^j-a_h^i$. This provides an $\hn$-negligible set of $\xi$ $N_{i,j}\subset \Sn$. 

Then \eqref{2502200925} hold for any $i\neq j$ for every $\xi \in \Sn \sm N$, where $N=\bigcup_{i\neq j} N_{i,j}$ is still $\hn$-negligible.
\end{proof}

\begin{lemma}\label{le:2503201833}
Let $F \subset \Omega$ be such that $\hn(F)<+\infty$,
and let $(a_h)_h$ be a sequence of \BBB piecewise \EEE rigid motions such that \eqref{2302202219} and \eqref{2202201909} hold. Then there exist \GGG  a subsequence, still denoted by $(a_h)_h$, and \EEE two sets $D \subset \Sn$, countable and dense in $\Sn$, and $N \subset F$ with $\hn(N)=0$, such that
\begin{equation}\label{2503201839}
|(a^i_h-a^j_h)(x) \cdot \xi| \to +\infty \quad\text{as }h\to +\infty\quad\text{for every }\xi \in D,\, x \in F \sm N,\, i\neq j\,.
\end{equation}
\end{lemma}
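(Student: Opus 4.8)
The plan is to deduce Lemma~\ref{le:2503201833} from Lemma~\ref{le:2502200924} by a diagonal argument, exploiting that the exceptional set of directions in the latter is $\hn$-negligible and that $F$ has finite $\hn$-measure. First I would apply Lemma~\ref{le:2502200924} to obtain an $\hn$-negligible set $N_0\subset\Sn$ such that \eqref{2502200925} holds for all $\xi\in\Sn\sm N_0$. Since $\Sn\sm N_0$ has full $\hn$-measure in $\Sn$, it is in particular dense in $\Sn$, so I can choose a countable dense set $D=\{\xi_k:k\in\N\}\subset\Sn\sm N_0$.

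The issue is that \eqref{2502200925} for a fixed $\xi$ only asserts $|(a_h^j-a_h^i)(x)\cdot\xi|\to+\infty$ for $\Ln$-a.e.\ $x\in\Omega$, whereas \eqref{2503201839} demands convergence pointwise on $F\sm N$ with $\hn(N)=0$; since $\hn(F)$ may be positive while $\Ln(F)=0$, the $\Ln$-null exceptional sets coming from Lemma~\ref{le:2502200924} are not directly usable. This is the main obstacle, and it is precisely what forces the passage to a subsequence. The remedy is the following: fix $\xi_k\in D$ and the (countably many) pairs $i\neq j$. The sequence of measurable functions $x\mapsto |(a_h^j-a_h^i)(x)\cdot\xi_k|$ converges to $+\infty$ in $\Ln$-measure on $\Omega$ (indeed a.e.), hence, since these are affine functions of $x$, one checks that convergence to $+\infty$ also holds in $\hn$-measure on $F$ — here one uses that an affine function which is $\Ln$-a.e.\ larger than a given constant $M$ on $\Omega$ is in fact larger than $M$ outside a hyperplane, which meets $F$ in an $\hn$-finite, but for our purposes controllable, set; more robustly, one passes through the slicing identity and the fact that $a_h^j - a_h^i$ is affine so $(a_h^j-a_h^i)(x)\cdot\xi_k$ restricted to $F$ is a bounded-below-in-measure affine function. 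From convergence in $\hn$-measure on $F$ one extracts, by a diagonal procedure over the countably many indices $k$ and pairs $(i,j)$, a single subsequence $(a_h)_h$ and a single $\hn$-null set $N\subset F$ off which $|(a_h^j-a_h^i)(x)\cdot\xi_k|\to+\infty$ for every $k$ and every $i\neq j$.

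Concretely I would proceed as follows. Enumerate the countable family of triples $(k,i,j)$ with $\xi_k\in D$, $i\neq j$. For the first triple, convergence in $\hn$-measure on $F$ gives a subsequence along which the convergence is $\hn$-a.e.\ on $F$; discard the corresponding $\hn$-null set. Iterate over the triples, each time passing to a further subsequence and enlarging the $\hn$-null set by an $\hn$-null amount; the countable union $N$ of these null sets is still $\hn$-null. Taking the diagonal subsequence yields $(a_h)_h$ such that \eqref{2503201839} holds for every $\xi\in D$, every $x\in F\sm N$, and every $i\neq j$. Density of $D$ in $\Sn$ is inherited from density of $\Sn\sm N_0$, which is immediate since $N_0$ is $\hn$-null. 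The one technical point deserving care in the write-up is the passage from ``$\to+\infty$ $\Ln$-a.e.\ on $\Omega$'' to ``$\to+\infty$ in $\hn$-measure on $F$'' for affine functions; I expect this to be a short argument using that for an affine map $\ell$ and $M>0$ the sublevel set $\{|\ell|\le M\}$ is either all of $\Rn$, a slab between two parallel hyperplanes, or empty, so control of $\Ln(\{|\ell_h|\le M\})$ for $\ell_h = (a_h^j-a_h^i)\cdot\xi_k$ translates into control of $\hn(\{|\ell_h|\le M\}\cap F)$ up to the thickness of the slab, which tends to $0$ as $h\to\infty$ precisely because $\ell_h\to+\infty$ pointwise a.e.
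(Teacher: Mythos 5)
Your outline correctly identifies the crux: the exceptional sets from Lemma~\ref{le:2502200924} are $\Ln$-null, which says nothing about $\hn$-measure on $F$, and this is what forces both the subsequence extraction and a more careful choice of $D$. But the step you flag as ``a short argument'' is precisely where the proposal breaks, and the claim as stated is false.

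You argue that for $\ell_h(x)=(a_h^i-a_h^j)(x)\cdot\xi$, the sublevel set $\{|\ell_h|\le M\}$ is a slab whose thickness tends to $0$, and that this forces $\hn\bigl(\{|\ell_h|\le M\}\cap F\bigr)\to 0$. That implication is wrong: a slab of vanishing thickness can carry a fixed positive $\hn$-measure portion of $F$. Take $n=2$, $F$ a segment of the line $\{x_1=0\}$, and $\ell_h(x)=h\,x_1$. Then $\ell_h\to\pm\infty$ at every point with $x_1\neq 0$, so the $\Ln$-a.e.\ divergence holds, and the slab $\{|\ell_h|\le 1\}=\{|x_1|\le 1/h\}$ has thickness $\to 0$, yet $\hn(F\cap\{|\ell_h|\le 1\})=\hn(F)>0$ for every $h$. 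So ``convergence to $+\infty$ in $\hn$-measure on $F$'' simply does not follow from ``$\Ln$-a.e.\ divergence on $\Omega$'' for affine $\ell_h$; no subsequence or diagonal extraction can repair this, because the obstruction is not fluctuation in $h$ but a stable geometric alignment of the limiting hyperplane with a hyperplane on which $F$ concentrates $\hn$-mass.

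What the paper does instead, and what you cannot avoid, is to \emph{build} $D$ using the geometry of $F$. Write $(a_h^i-a_h^j)(x)=A_h^{i,j}x+b_h^{i,j}$, normalize by $\mu_h^{i,j}=|A_h^{i,j}|+|b_h^{i,j}|\to+\infty$, and pass to a subsequence so that $A_h^{i,j}/\mu_h^{i,j}\to A^{i,j}$, $b_h^{i,j}/\mu_h^{i,j}\to b^{i,j}$ with $|A^{i,j}|+|b^{i,j}|=1$. Because $\hn(F)<+\infty$, there are at most countably many hyperplanes $H_l$ with $\hn(F\cap H_l)>0$. One must then pick $D$ inside the set of directions $\xi$ that avoid, for every pair $(i,j)$: the kernel of $A^{i,j}$, the directions for which $A^{i,j}\xi$ is orthogonal to some $H_l$, and (when $A^{i,j}=0$) the set $(b^{i,j})^\perp$. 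This excluded set is a countable union of proper linear subspaces (using that skew-symmetric $A^{i,j}\neq 0$ has rank $\ge 2$, and that $(A^{i,j}\xi)\cdot\xi=0$ forces $\xi$ parallel to $H_l$ if $A^{i,j}\xi\perp H_l$), so its complement is dense and a countable dense $D$ can be chosen inside it. For such $\xi$, the only place where divergence can fail is the single hyperplane $\{A^{i,j}\xi\cdot x=b^{i,j}\cdot\xi\}$, which by construction is not one of the $H_l$ and hence meets $F$ in an $\hn$-null set. This is the step your proposal omits; without the $F$-dependent restriction on $\xi$, the conclusion fails as in the example above.
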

\begin{proof}
For any $i \neq j$, and $h \in \N$, we set $(a^i_h-a^j_h)(x)=A^{i,j}_h\, x + b^{i,j}_h$, with $A^{i,j}_h \in \mathbb{M}^{n\times n}_{\rm skew}$ 
and $b^{i,j}_h \in \Rn$. 
Then 
\begin{equation}\label{3107212217}
(a^i_h-a^j_h)(x) \cdot \xi= - A^{i,j}_h \xi \cdot x + b^{i,j}_h\cdot \xi\,.
\end{equation}
\GGG
For $i\neq  j$, let $\mu_h^{i,j}:=|A_h^{i,j}|+|b^h_{i,j}|$; it holds that $\mu_h^{i,j}\to +\infty$ as $h\to +\infty$.
Then, we can extract iteratively a subsequence (not relabelled)
such that for each pair $(i,j)$, $i\neq j$ 
\begin{equation}\label{2809211651}
\frac{A_h^{i,j}}{\mu_h^{i,j}}\to A^{i,j}\quad\text{and}\quad \frac{b_h^{i,j}}{\mu_h^{i,j}}\to b^{i,j}\quad\text{ with $|A^{i,j}|+|b^{i,j}|=1$.}
\end{equation}


We observe that, since $\hn(F)<+\infty$, there exist at most countably many hyperplanes $(H_l)_{l\in \N}$ such that $\hn(F \cap H_l)>0$.
We then define
\begin{equation*}
D\text{ a countable dense subset of } \widehat{D}:=\Sn \sm \left(E \cup \bigcup_{i\neq j: A^{i,j}= 0} (b^{i,j})^\perp \right),
\end{equation*}
for
\[
 E:= \bigcup_{i\neq j: A^{i,j}\neq 0} \left\{ \mathrm{ker}\, A^{i,j}\cup \bigcup_{l\in \N}  \{ \xi \in \Sn \colon A^{i,j}\xi \text{ is orthogonal to } H_l \} \right\}.
\]
Such a set $D$ exists because
the set $\widehat{D}$ has full measure in $\Sn$:
indeed $\Sn\setminus \widehat{D}$ is included in a countable union
of linear subspaces of dimension at most $n{-}1$ (as $A^{i,j}$ has rank at least
two, or observing that if $A^{i,j}\xi$ is orthogonal to $H_l$
then $\xi$ must be parallel to $H_l$ since $(A^{i,j}\xi)\cdot\xi=0$).

%
%

In case $A^{i,j}=0$, for any $\xi\in D$ and $x \in \Omega$ one has $|(a^i_h(x)-a^j_h(x))\cdot\xi|\to+\infty$. In fact,
$ b^{i,j}\cdot \xi\neq 0$ by definition of $D$, and 
\[
\lim_{h\to +\infty}(a^i_h(x)-a^j_h(x))\cdot\xi =\lim_{h\to +\infty}(\mu^{i,j}_h  b^{i,j} \cdot \xi)=+\infty
\]
for every $x \in \Omega$,  by \eqref{2809211651}.

Now we consider $(i,j)$, $i\neq j$, with $A^{i,j}\neq 0$. We claim that given $\xi\in D$, there is a $\hn$-negligible set $N_\xi^{i,j}$ such that $|(a^i_h(x)-a^j_h(x))\cdot\xi|\to+\infty$ for every $x \in F \sm N_\xi^{i,j}$. In fact, as soon as $-A^{i,j}\xi\cdot x + b^{i,j}\cdot \xi\neq 0$,
we have that $|(a^i_h(x)-a^j_h(x))\cdot\xi|\to+\infty$, by \eqref{2809211651}; on the other hand, $H_\xi^{i,j}:=\{x\in \Omega\colon A^{i,j}\xi \cdot x = b^{i,j}\cdot \xi\}$ is an hyperplane (since $\xi \notin \mathrm{ker}\,A^{i,j}$) such that
\begin{equation}\label{eq:nonzero}
 \hn(F \cap H_\xi^{i,j})=0,
\end{equation}
indeed $H_\xi^{i,j}$ is different from all the hyperplanes $H_l$
by the choice of $\xi \not\in E$.
Letting $N_\xi^{i,j}:=F \cap H_\xi^{i,j}$ and 
\[
N:= \bigcup_{\xi \in D} \, \bigcup_{i\neq j: A^{i,j}\neq 0} N_\xi^{i,j},
\]
we find that $\hn(N)=0$ and~\eqref{2503201839} holds. 
\end{proof}

\GGG \begin{lemma}\label{le:0108211842}
Let $(a_k)_k$ be a sequence of infinitesimal rigid motions, $a_k\colon \Omega\to \Rn$, $a_k(x)=A_k x +b_k$, $A_k\in \Mskew$, $b_k\in \Rn$.
Then, up to a subsequence, either $(a_k)_k$ converges uniformly to an infinitesimal rigid motion  
(with values in $\Rn$) or there exists an affine subspace $\Pi$ of dimension at most $n{-}2$ such that $|a_k(x)|\to +\infty$ for every $x$ in $\Omega \sm \Pi$.
\end{lemma}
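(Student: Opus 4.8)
The plan is to split into two cases according to the behaviour of $\mu_k:=|A_k|+|b_k|$. If $\liminf_k\mu_k<+\infty$, I would first pass to a subsequence along which $\mu_k$ is bounded, then to a further subsequence along which $A_k\to A$ and $b_k\to b$; since $\Mskew$ is closed, $A\in\Mskew$. Writing $R:=\sup_{x\in\Omega}|x|<+\infty$ (recall that $\Omega$ is bounded), one has $|a_k(x)-(Ax+b)|\le R\,|A_k-A|+|b_k-b|$ for every $x\in\Omega$, so $(a_k)_k$ converges uniformly on $\Omega$ to the infinitesimal rigid motion $x\mapsto Ax+b$; this is the first alternative.

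In the remaining case $\mu_k\to+\infty$, the idea is to rescale: set $\widehat A_k:=A_k/\mu_k$, $\widehat b_k:=b_k/\mu_k$, $\widehat a_k:=a_k/\mu_k$, so that $|\widehat A_k|+|\widehat b_k|=1$ and $a_k(x)=\mu_k\widehat a_k(x)$. Up to a subsequence $\widehat A_k\to A\in\Mskew$ and $\widehat b_k\to b\in\Rn$ with $|A|+|b|=1$, and, exactly as above, $\widehat a_k\to\widehat a$ uniformly on $\Omega$, where $\widehat a(x):=Ax+b$. Then, for every $x$ with $\widehat a(x)\neq0$ one has $|\widehat a_k(x)|\to|\widehat a(x)|>0$, hence $|a_k(x)|=\mu_k|\widehat a_k(x)|\to+\infty$. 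It therefore suffices to check that $Z:=\{x\in\Rn\colon Ax+b=0\}$ is contained in an affine subspace $\Pi$ of dimension at most $n-2$ (if $Z=\emptyset$, take $\Pi$ to be any such subspace): indeed then $\Omega\sm\Pi\subset\{\widehat a\neq0\}$ and the second alternative holds.

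The only genuinely structural point — and the one that forces ``at most $n-2$'' rather than merely $n-1$ — is the codimension of $Z$. If $A=0$ then $b\neq0$ because $|A|+|b|=1$, so $Z=\emptyset$. If $A\neq0$, I would invoke the elementary fact that a nonzero skew-symmetric matrix has even rank, so $\mathrm{rank}\,A\ge2$; hence $Z$ is either empty or a translate of $\mathrm{ker}\,A$, which is a linear subspace of dimension $n-\mathrm{rank}\,A\le n-2$. Beyond this observation the argument is a routine normalisation-and-compactness argument, so I do not expect a real obstacle; the only care needed is in the bookkeeping of the successive subsequence extractions and in the degenerate case $Z=\emptyset$.
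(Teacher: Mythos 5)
Your proof is correct and follows essentially the same route as the paper's: split according to whether $\mu_k=|A_k|+|b_k|$ stays bounded along a subsequence, normalize by $\mu_k$ in the divergent case, and use that a nonzero skew-symmetric matrix has rank at least $2$ to control the dimension of $\{Ax+b=0\}$. You are in fact slightly more careful than the paper in two minor places: you pass through the uniform convergence $\widehat a_k\to\widehat a$ before concluding $|a_k(x)|\to+\infty$ (rather than writing $\lim_k\mu_k|Ax+b|$ directly), and you explicitly handle the degenerate case $\{Ax+b=0\}=\emptyset$ by taking $\Pi$ arbitrary, which the paper leaves implicit.
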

\begin{proof}
If $A_k$ and $b_k$ are uniformly bounded on a subsequence, then we have uniform convergence to an infinitesimal rigid motion (with values in $\Rn$).
Else, $\mu_k:= |A_k|+|b_k|$ diverges.
We argue similarly to what done in the proof of Lemma~\ref{le:2503201833}.
Up to
a subsequence, we may assume that 
\begin{equation}\label{2909210951}
\frac{A_k}{\mu_k}\to A \ \text{ and } \
\frac{b_k}{\mu_k}\to b \quad \text{ with }|A|+|b|=1.
\end{equation}
Then, for any $x$,
\[
 Ax+b \neq 0\quad \Rightarrow \quad
  \lim_{k\to +\infty}|a_k(x)|=\lim_{k\to +\infty} \mu_k|Ax+b|=+\infty
\] 
If $A=0$, this is true for all $x$ (since $b\neq 0$). Else, this
is true as long as $x$ does not belong to the affine subspace $\Pi:=\{x\colon Ax+b=0\}$, which has dimension at most $n{-}2$, $A$ being a non null skew-symmetric matrix.  
%
%
%
%
%
\end{proof}
\EEE
\section{The compactness result}\label{Sec2}
This section is devoted to the proof of Theorem~\ref{thm:main}.
\begin{proof}[Proof of Theorem~\ref{thm:main}]
We divide the proof in steps.  \medskip
\paragraph*{\textbf{Step~1: Existence of $(a_h)_h$.}}
Let $\mu_h:=\hn \mres J_{u_h} \in \mathcal{M}_b^+(\Omega)$. Since $(u_h)_h$ is bounded in $GSBD^p(\Omega)$, $\sup_h |\mu_h|(\Omega)=\hn(J_{u_h})<M$ and then, up to a (not relabelled) subsequence, $\mu_h \wstar \mu $ in $\mathcal{M}_b^+(\Omega)$. We denote by
\begin{equation*}
J:=\Big\{x\in \Omega \colon \limsup_{\varrho\to 0^+}\frac{\mu(B_\varrho(x))}{\varrho^{n-1}}>0\Big\}\,.
\end{equation*}
By \cite[Theorem~2.56]{AFP}, the set $J$ is $\sigma$-finite \BBB w.r.t.\ \EEE the measure $\hn$, so in particular $\Ln(J)=0$.
Let us fix $\ol\sigma \in (\tfrac12, 1)$ and consider $\ol\eta=\ol\eta(\ol\sigma)$  and $C>0$ \EEE
such that the conclusion of Theorem~\ref{th:KornGSBD} holds true in correspondence to $\ol\sigma$.  (We assume $n$ and $p$ fixed once for all.) \EEE
\medskip
\paragraph*{\textit{Substep~1.1: Existence of $(a_h)_h$ up to a set of small measure.}}
Let us fix $\eta \in (0, \ol \eta)$. In 
 the following 
we perform a construction in correspondence of $\ol\sigma$ and $\eta$; to ease the notation, we do not write explicitly the dependence on these parameters in the objects introduced in the construction. Afterwards (starting from \eqref{2302201929}), we shall keep track of the dependence on $\eta$.
 
By definition of $J$, for any $x \in \Omega\sm J$ there exists $\varrho_0=\varrho_0(x,\eta)$ such that $\mu(B_\varrho(x))\leq \frac{\eta}{2}\varrho^{n-1}$ for every $\varrho\in (0, \varrho_0)$. Then, in view of the weak$^*$ convergence of $\mu_h$ to $\mu$, for every $\varrho \in (0, \varrho_0)$ such that $\mu(\partial B_\varrho(x))=0$ (notice that this holds for all $\varrho$ except countable many) we have that $\lim_{h\to \infty}\mu_h(B_\varrho(x))=\mu(B_\varrho(x))$. \BBB We denote by $T_0=T_0(x)$ the set of $\varrho<\varrho_0$ for which $\lim_{h\to \infty}\mu_h(B_\varrho(x))=\mu(B_\varrho(x))$. \EEE This implies that there exists \BBB $h_0=h_0(x,\eta,\varrho)$ \EEE such that $\mu_h(B_\varrho(x))< \eta \varrho^{n-1}$ \BBB for $\varrho\in T_0$ and $h\geq h_0$. \EEE

Applying Theorem~\ref{th:KornGSBD} in correspondence to $u_h \in GSBD^p(B_\varrho(x))$ we deduce that for every $x \in \Omega\sm J$, \BBB $\varrho\in T_0$, $h\geq h_0$, \EEE there exist $v_{h,\varrho,x} \in GSBD^p(B_\varrho(x))$ and a set of finite perimeter $\omega_{h,\varrho,x} \subset B_\varrho(x)$ such that $v_{h,\varrho,x}=u_h$ in $B_\varrho(x)\sm \omega_{h,\varrho,x}$, $v_{h,\varrho,x} \in W^{1,p}(B_{\ol \sigma \varrho}(x);\Rn)$, and
\begin{subequations}\label{eqs:2302201716}
\begin{align}
\int_{B_\varrho(x)} |e(v_{h,\varrho,x})|^p \dx & \leq  2 \EEE \int_{B_\varrho(x)} |e(u_h)|^p \dx\,,\label{2302201731}\\
 \hn(\partial^* \omega_{h,\varrho,x}) & \leq  C\, \EEE \hn(J_{u_h} \cap B_\varrho(x))\,,\label{2302201732}\\
\Ln(\omega_{h,\varrho,x}) & \leq  C\, \EEE \eta^{\frac{n}{n-1}} \varrho^n \,,\label{2302201733}
\end{align}
\end{subequations}
\BBB where in \eqref{2302201733} we used also the Isoperimetric Inequality. \EEE
In particular, by Korn and Korn-Poincaré inequalities applied to $v_{h,\varrho,x}$ in $B_{\ol\sigma \varrho}(x)$, there exist infinitesimal rigid motions $a_{h,\varrho,x}$ such that
\begin{equation}\label{2302200929}
\int_{B_{\ol\sigma \varrho}(x)} \Big(| v_{h,\varrho,x} -a_{h,\varrho,x}|^{p^*} + |\nabla (v_{h,\varrho,x} -a_{h,\varrho,x})|^p \Big) \dx \leq C  \int_{B_\varrho(x)} |e(u_h)|^p \dx\,.
\end{equation}
We notice that the family
\[
\mathcal{F}:=\{ B_{\ol \sigma \varrho}(x) \colon x \in \Omega\sm J,\, \BBB \varrho \in T_0(x)\EEE\}
\] is a fine cover of $\Omega\sm J$ (cf.\ \cite[Section~2.4]{AFP}). Then, by \BBB Besicovitch \EEE Covering Theorem, there exists a disjoint family of balls $(B_{\ol\sigma\varrho(x_i)}(x_i))_i \subset \mathcal{F}$ with $\Ln\Big((\Omega\sm J) \sm \bigcup_{i\in \N}  B_{\ol\sigma\varrho(x_i)}(x_i) \Big)=0$. In particular, there exists $N$, depending on $\eta$, such that
\begin{equation}\label{3103201628}
\Ln\Big((\Omega\sm J) \sm \bigcup_{i=1}^N  B_{\ol\sigma\varrho(x_i)}(x_i) \Big)<\eta\,.
\end{equation}
Let us fix $i \in \{1,\dots, N\}$. 
There exist (we set $\varrho_i \equiv \varrho(x_i)$) sequences of sets of finite perimeter $(\omega_{h, \varrho_i, x_i})_h$ contained in $B_{\varrho_i}(x_i),$ of functions $(v_{h,\varrho_i, x_i})_h \subset GSBD^p(B_{\varrho_i}(x_i)) \cap W^{1,p}(B_{\ol\sigma \varrho_i}(x_i);\Rn)$  with $v_{h,\varrho_i, x_i}=u_h$ in  $B_{\varrho_i}(x_i)\sm \omega_{h, \varrho_i, x_i}$, and of infinitesimal rigid motions $(a_{h,\varrho_i, x_i})_h$ such that \eqref{eqs:2302201716} and \eqref{2302200929} hold for $\varrho=\varrho_i$, $x=x_i$.

Then, by \eqref{2302201732} and \eqref{2302201733}, up to a subsequence (not relabelled) the characteristic functions of the sets $\omega_{h,\varrho_i, x_i}$ converge weakly$^*$ in $BV(B_{\varrho_i}(x_i))$ as $h\to +\infty$ to a set $\omega_{\varrho_i, x_i} \subset B_{\varrho_i}(x_i)$ with
\begin{equation}\label{2302201934}
\Ln(\omega_{\varrho_i,x_i})  \leq  C\, \EEE \eta^{\frac{n}{n-1}} {\varrho_i}^n\,.
\end{equation}
Moreover, again up to a (not relabelled) subsequence, 
\begin{equation}\label{2302202036}
v_{h, \varrho_i, x_i} - a_{h, \varrho_i, x_i} \weak u_{\varrho_i, x_i} \quad \text{in }W^{1,p}(B_{\ol \sigma \varrho_i}(x_i); \Rn)\,.
\end{equation}
We may assume that the convergences above hold along the same subsequence, independently on $i \in \{1,\dots, N\}$.
Let us denote
\begin{equation}\label{2302201929}
\omega^\eta:= \bigcup_{i =1}^N \big( \omega_{\varrho_i, x_i} \cap B_{\ol\sigma \varrho_i}(x_i) \big)\,,\quad a^\eta_h:=\sum_{i =1}^N a_{h,\varrho_i, x_i} \chi_{B_{\ol\sigma \varrho_i}(x_i)}\,,\quad u^\eta:=\sum_{i =1}^N u_{\varrho_i, x_i} \chi_{B_{\ol\sigma \varrho_i}(x_i)}\,.
\end{equation}
By \eqref{2302201934} we get
\begin{subequations}\label{eqs:2302202043}
\begin{equation}\label{2302202034}
\Ln(\omega^\eta) \leq C(\ol \sigma,n)  \, \eta^{\frac{n}{n-1}} \, \Ln(\Omega)\,,
\end{equation}
and \BBB \eqref{2302202036} implies \EEE that
\begin{equation}\label{2302202035}
u_h - a^\eta_h \to u^\eta\quad\text{in }L^p(\Omega\sm E^\eta; \Rn)\,,\qquad\text{for }E^\eta:=\omega^\eta \cup \Big((\Omega\sm J) \sm \bigcup_{i=1}^N  B_{\ol\sigma\varrho(x_i)}(x_i)\Big)\,.
\end{equation}
\end{subequations}
We may now find a partition $\mathcal{P}^\eta= (P_j^\eta)_j$ of $\Omega\sm E^\eta$ and a function $\tilde{u} \in L^p(\Omega\sm E^\eta;\Rn)$ such that, up to extracting a further subsequence \BBB w.r.t.\ \EEE $h$, in correspondence to any \BBB $P^\eta_j$ \EEE there is a sequence $(a_{h,j}^\eta)_h$ such that 
\begin{subequations}\label{eqs:2302202100}
\begin{equation}\label{2302202101}
|a_{h,j}^\eta(x) - a_{h,i}^\eta(x)| \to +\infty \text{ for a.e.\ } x \in \Omega \text{ whenever }i \neq j
\end{equation} 
and
\begin{equation}\label{2302202102}
u_h-a_{h,j}^\eta \to \tilde{u}^\eta \quad\text{in }L^{\BBB p \EEE}(P_j^\eta; \Rn)\,.
\end{equation}
\end{subequations}
In fact, this is done as follows by regrouping the sequences of infinitesimal rigid motions in each $B_{\ol\sigma \varrho_i}(x_i)$ in equivalence classes, up to extracting a further subsequence.

\BBB By Lemma~\ref{le:0108211842}, \EEE denoting $a_h^i \equiv a_{h,\varrho_i,x_i}$ for every $i \in \N$,  we may extract a subsequence (not relabelled) such that any sequence in 
\begin{equation*}
G:= \{(a^1_h)_h\} \cup \bigcup_{1\leq i < j \leq N} \{ (a^i_h-a^j_h)_h\}\,.
\end{equation*}
 either converges uniformly to an infinitesimal rigid motion
or diverges a.e.\ in $\Omega$.
We say that $i \neq j$ are in the same equivalence class if and only if $(a^i_h-a^j_h)_h$ converges uniformly to an infinitesimal rigid motion.
 
We conclude \eqref{eqs:2302202100} by considering the union of the $B_{\ol\sigma \varrho_i}(x_i) \sm \omega^\eta$ for the $i$'s in the same equivalence class, to get a partition $\mathcal{P}^\eta=(P_j^\eta)_j$, and by fixing a sequence of infinitesimal rigid motions as representative in each $P_j^\eta$. 
\medskip
\paragraph*{\textit{Substep~1.2: Conclusion of Step~1.}}
Let us now take the sequence $\eta_k:=\eta 2^{-k}$ for $k \in \N$. By a diagonal argument we may assume that \BBB \eqref{3103201628} and \EEE \eqref{eqs:2302202043} hold for the same subsequence $(u_h)_h$ for every $\eta_k$, for suitable $\omega^{\eta_k}$, $E^{\eta_k}$, $a_h^{\eta_k}$, $u^{\eta_k}$. Moreover, we find partitions $\mathcal{P}^{\eta_k}$ such that \eqref{eqs:2302202100} hold for $\eta_k$ in place of $\eta$.

Consider two $P^{\eta_{k_1}}_j$ and $P^{\eta_{k_2}}_i$ such that
\begin{equation*}
\Ln(P^{\eta_{k_1}}_j \cap P^{\eta_{k_2}}_i )>0\,.
\end{equation*} 
We notice that the sequences of infinitesimal rigid motions $(a_{h,j}^{\eta_{k_1}})_h$ and $(a_{h,i}^{\eta_{k_2}})_h$ belong to the same equivalence class, since 
$a_{h,j}^{\eta_{k_1}} - a_{h,i}^{\eta_{k_2}} \to \tilde{u}^{\eta_{k_1}}- \tilde{u}^{\eta_{k_2}}$ in $L^{\BBB p \EEE}(P^{\eta_{k_1}}_j \cap P^{\eta_{k_2}}_i ; \Rn)$.
This means that for any $k$, the partition $\mathcal{P}^{\eta_{k+1}}$ coincides with  the partition $\mathcal{P}^{\eta_{k}}$ in $\Omega\sm (E^{\eta_k} \cup E^{\eta_{k+1}})$.
Then the partition $\widetilde{\mathcal{P}}^{\eta_k}$ of $\Omega\sm \bigcap_{j\leq k} E^{\eta_j}$ characterised by
\begin{equation}\label{2508210729}
\widetilde{\mathcal{P}}^{\eta_k}=
\mathcal{P}^{\eta_j} \quad \text{in }\Omega\sm E^{\eta_j} \text{ for every }j\leq k
\end{equation}
is well defined, and such that the analogue of \eqref{2302202101} holds, that is for $\widetilde{P}^{\eta_k}_j$, $\widetilde{P}^{\eta_k}_i \in \widetilde{\mathcal{P}}^{\eta_k}$
\begin{equation}\label{2302202211}
|a_{h,j}^{\eta_k}(x) - a_{h,i}^{\eta_k}(x)| \to +\infty \text{ for a.e.\ } x \in B_1(0) \text{ whenever }i \neq j\,.
\end{equation} 
\BBB We remark that by \eqref{2508210729} we mean that for any $x \in \Omega\sm \bigcap_{j\leq k} E^{\eta_j}$ the set in the partition $\widetilde{\mathcal{P}}^{\eta_k}$ containing $x$ is the union of the sets in the partitions $\mathcal{P}^{\eta_j}$ containing $x$, as $j\leq k$. In this way we get that 
\begin{equation}\label{2508210803}
\widetilde{\mathcal{P}}^{\eta_k}=\widetilde{\mathcal{P}}^{\eta_k+1}\quad\text{ in }\Omega \sm \bigcap_{j\leq k} E^{\eta_j}\,.
\end{equation} 
Therefore $\widetilde{\mathcal{P}}^{\eta_k}$ are partitioning a larger set as $k$ increases, and any two of them coincide where regarded in common subdomains; moreover,  $(a_{h,j}^{\eta_k})_h$ and $(a_{h,l}^{\eta_k+1})_h$ are in the same equivalence class if $\Ln(\widetilde{\mathcal{P}}^{\eta_k}_j \cap \widetilde{\mathcal{P}}^{\eta_k+1}_l)>0$, arguing as above, so that we may assume that the sequences $(a_{h,j}^{\eta_k})_h$ and $(a_{h,l}^{\eta_k+1})_h$ coincide. This allows to find for any $K\in \N$ a piecewise rigid motion $a_h^K$ on $\Omega \sm \bigcap_{j\leq K} E^{\eta_j}$ such that \eqref{2302202211} holds for every $k\leq K$ and $a_h^k=a_h^{k+1}$ in $\Omega \sm \bigcap_{j\leq k} E^{\eta_j}$. \EEE

Since $\Ln(\bigcap_{j\leq k} E^{\eta_j})\leq \Ln(E^{\eta_k})\to 0$ as $k\to +\infty$ \BBB by \eqref{3103201628} and \eqref{2302202034}, 
 by the monotonicity property of partitions \eqref{2508210803} \EEE
 we obtain in the limit that there exists a partition $\mathcal{P}=(P_j)_j$ of $\Omega$ (up to a $\Ln$-negligible set) and piecewise 
 rigid motions defined as in \eqref{2302202219} such that \eqref{2202201909} holds and
\begin{equation}\label{2502201108}
u_h - a_h \to u \quad\text{in }L^0(\Omega;\Rn)\,,
\end{equation} 
\BBB where $a_h=a_h^k$ in $\Omega \sm \bigcap_{j\leq k} E^{\eta_j}$, for every $k\in \N$. 
The partition $\mathcal{P}$ is thus obtained by the monotonicity property \eqref{2508210803}; later we see that $\mathcal{P}$ is indeed a Caccioppoli partition.  
We remark that we do not have a uniform control of $u_h-a_h^k$ in $L^p$ norm w.r.t.\ $k$, while a pointwise convergence is guaranteed. \EEE
\medskip
\paragraph*{\textbf{Step~2: Proof of \eqref{eq:sciSalto}}}
In this step we follow a slicing strategy, in the spirit of \cite[Theorem~1.1]{CC18}. In particular, 
 the first part of the argument above is similar to that in \cite[Theorem~1.1, lower semicontinuity]{CC18}. We remark that we cannot directly apply that result (see Remark~\ref{rem:3103201937}).

We then fix $\xi \in \Sn$ in a set of full $\hn$-measure of $\Sn$ for which \eqref{2502200925} holds (cf.\ Lemma~\ref{le:2502200924}), and introduce
\begin{equation}\label{0101182132}
\I\xy(u_h):=\int\limits_\Oxy |(\dot{u}_h)\xy|^p\,\mathrm{d}t\,,
\end{equation}
where 
$(\dot{u}_h)\xy$ is the density of the absolutely continuous part of $\mathrm{D}(\widehat{u}_h)\xy$, the distributional derivative of $(\widehat{u}_h)\xy$ ($(\widehat{u}_h)\xy\in SBV^p_{\mathrm{loc}}(\Oxy)$ for every $\xi\in \Sn$ and for $\hn$-a.e.\ $y\in \Pi^\xi$, since $u_h\in GSBD^p(\Omega)$; we denote here and in the following $\widehat{u}_h$ for $\widehat{u_h}$).
Therefore
\begin{equation}\label{0101182137}
\int \limits_{\Pi_\xi} \I\xy(u_h) \,\mathrm{d}\hn(y)=\int \limits_\Omega |e(u_h)(x)\xi \cdot \xi  |^p \leq \int \limits_\Omega |e(u_h)|^p \dx \leq M\,,
\end{equation}
by Fubini-Tonelli's theorem and since $(u_h)_h$ is bounded in $GSBD^p(\Omega)$.
Let $u_k=u_{h_k}$ be a subsequence of $u_h$ such that
\begin{equation}\label{0101182244}
\lim_{k\to \infty} \hn(J_{u_k})=\liminf_{h\to \infty} \hn(J_{u_h})<+\infty\,,
\end{equation}
so that, 
by \eqref{0101182248}, \eqref{0101182137}, 
and Fatou's lemma, we have that for $\hn$-a.e.\ $\xi \in \Sn$
\begin{equation}\label{0101182302}
\liminf_{k\to\infty} \int\limits_{\Pi_\xi}\Big( \ho\big(J_{(\widehat{u}_k)\xy}\big) + \varepsilon \I\xy(u_k)  \Big)\,\mathrm{d}\hn(y) <+\infty\,,
\end{equation}
for a fixed $\varepsilon\in (0,1)$.
Let us fix $\xi \in \Sn$ such that \eqref{2502200925} and \eqref{0101182302} hold. Then there is a subsequence $u_m=u_{k_m}$ of $u_k$, depending on $\varepsilon$ and $\xi$, such that 
\begin{equation}\label{2502201156}
(\widehat{u}_m-\widehat{a}_m)\xy \to \widehat{u}\xy \quad\text{in }L^0(\Omega\xy) \quad\text{for $\hn$-a.e.\ $y\in \Pi_\xi$}
\end{equation}
and  
\begin{equation}\label{0101182324}
\begin{split}
\lim_{m\to\infty} &\int\limits_{\Pi_\xi}\Big( \ho\big(J_{(\widehat{u}_m)\xy}\big) + \varepsilon \I\xy(u_m) \Big)\,\mathrm{d}\hn(y)\\
&=\liminf_{k\to\infty} \int\limits_{\Pi_\xi}\Big( \ho\big(J_{(\widehat{u}_k)\xy}\big) + \varepsilon  \I\xy(u_k)  \Big)\,\mathrm{d}\hn(y)\,.
\end{split}
\end{equation}
As for \eqref{2502201156}, we notice that it follows from  Fubini-Tonelli's theorem and the convergence in measure of $u_h-a_h$ to $u$ (see \eqref{2502201108}), which corresponds to $\tanh(u_h-a_h) \to \tanh(u) \in L^1(\Omega; \Rn)$ (with $\tanh(v)=(\tanh (v\cdot e_1), \dots, \tanh (v \cdot e_n))$ for every $v \colon \Omega\to \Rn$). 
Therefore, by \eqref{0101182324} and Fatou's lemma, we have that for $\hn$-a.e.\ $y\in \Pi^\xi$
\begin{equation}\label{0101182328}
\liminf_{m\to \infty} \Big( \ho\big(J_{(\widehat{u}_m)\xy}\big) + \varepsilon \I\xy(u_m) \Big) < +\infty\,,
\end{equation}
Moreover, we infer that if $\xi$ satisfies \eqref{2502200925}, then 
\begin{equation}\label{2403201910}
\text{for $\hn$-a.e.\ $y\in \Pi^\xi$} \quad |(\widehat{a}_h^i-\widehat{a}_h^j)\xy(t)|= |(\widehat{a}_h^i-\widehat{a}_h^j)\xy(0)| \to +\infty \quad\text{for every }t\in \Omega\xy\,.
\end{equation}
Indeed, since $e(a^i_h)=0$ for every $i$, $h$, then, for fixed $h$, $(\widehat{a}_h^i-\widehat{a}_h^j)\xy$ is constant in $\Omega\xy$. Thus
\[
|(a^i_h-a^j_h)\cdot \xi|\to +\infty \ \text{ in } \ \bigcup \{ \Omega\xy \colon y\in \Pi^\xi \text{ s.t.\ } | (a^i_h-a^j_h)(y)\cdot \xi|\to +\infty\}\,,
\]
and \eqref{2403201910} holds true.

Let us consider $y\in \Pi^\xi$ satisfying \eqref{2502201156}, \eqref{0101182328},  \eqref{2403201910}, and such that 
$(\widehat{u}_m)\xy\in SBV_{\mathrm{loc}}(\Oxy)$
 for every $m$. Then we may extract a subsequence $u_j=u_{m_j}$ from $u_m$, depending also on $y$, for which
\begin{equation}\label{0101182346}
\lim_{j\to \infty} \Big( \ho\big(J_{(\widehat{u}_j)\xy}\big) + \varepsilon \I\xy(u_j)\Big)=\liminf_{m\to \infty} \Big( \ho\big(J_{(\widehat{u}_m)\xy}\big) + \varepsilon \I\xy(u_m) \Big)
\end{equation}
and
\begin{equation*}
(\widehat{u}_j-\widehat{a}_j)\xy \to \widehat{u}\xy \quad\mathcal{L}^1\text{-a.e.\ in }\Omega\xy\,,
\end{equation*}
\begin{equation}\label{2403201949}
 |( \widehat{a}^{i_1}_j-\widehat{a}^{i_2}_j)\xy(t)|=|( \widehat{a}^{i_1}_j-\widehat{a}^{i_2}_j)\xy(0)| \to +\infty \quad\text{for }t\in \Omega\xy \text{ and }i_1 \neq i_2\,.
\end{equation}
In the following, we denote  (similarly to \eqref{1305201259}, in dimension one) \EEE
\begin{equation*}
\partial \mathcal{P}\xy:= \bigcup_{j\in \N} \partial (P_j)\xy \BBB \cap \Omega\xy \EEE \subset \Omega\xy\,.
\end{equation*}

Since,  by \eqref{0101182346}, the number of jump points of $(\widehat{u}_j)\xy$ 
is  bounded uniformly \BBB w.r.t.\ \EEE $j$, up to pass to a subsequence of $((\widehat{u}_j)\xy)_j$ we may assume that for every $j$ 
\[
\ho\big(J_{(\widehat{u}_j)\xy}\big)=N_y \in \N\,.
\]
Therefore we have $M_y \leq N_y$ cluster points in the limit, denoted by 
\[t_1,\dots ,t_{M_y}\,.\]

Using the equiboundedness of $\mathrm{I}\xy(u_j)$, which follows from \eqref{0101182346},
we get that, for $\mathcal{L}^1$-almost any choice of $ \ol t \EEE \in (t_l, t_{l+1})$, 
\begin{equation}\label{2502201338}
 t \EEE \mapsto (\widehat{u}_j)\xy( t \EEE) - (\widehat{u}_j)\xy( \ol t \EEE) \text{ are equibounded \BBB w.r.t.\ \EEE $j$ in } W^{1,p}_\mathrm{loc}(t_l, t_{l+1})\,,
\end{equation} 
 by the Fundamental Theorem of Calculus,
 and then this sequence converges locally uniformly 
  in $(t_l, t_{l+1})$, as $j \to \infty$.
  
 Let us prove that
 \begin{equation}\label{2502201307}
 \partial \mathcal{P}\xy \subset \{ t_1,\dots ,t_{M_y} \}
\end{equation}  
We argue by contradiction, assuming that there exists $l\in \{1,\dots, M_y\}$ and $i_1$ such that $\partial (P_{i_1})\xy \cap (t_l, t_{l+1})\neq \emptyset$.
If this holds, there exist two sequences of infinitesimal rigid motions $(a_j^{i_1})_j$, $(a_j^{i_2})_j$ (the latter corresponds to some $P_{i_2}$ with $i_1 \neq i_2$)  such that
\begin{equation}\label{2502201322}
\begin{split}
&(\widehat{u}_j - \widehat{a}_j^{i_1})\xy \to \widehat{u}\xy \quad\mathcal{L}^1\text{-a.e.\ in } (P_{i_1})\xy \cap (t_l, t_{l+1}), \\
& (\widehat{u}_j - \widehat{a}_j^{i_2})\xy\to \widehat{u}\xy\quad\mathcal{L}^1\text{-a.e.\ in } (P_{i_2})\xy \cap (t_l, t_{l+1})\,,
\end{split}
\end{equation}
with $\mathcal{L}^1\big(  (P_{i_1})\xy  \cap (t_l, t_{l+1})  \big),\, \mathcal{L}^1\big(  (P_{i_2})\xy  \cap (t_l, t_{l+1})  \big)>0$. But this gives, with \eqref{2502201338} and since $\widehat{a}_j^i$ are infinitesimal rigid motions
and $\widehat{u}\xy\colon \Omega\xy \to \R$, that 
$( \widehat{a}^{i_1}_j-\widehat{a}^{i_2}_j)\xy $ is constant in $\Omega\xy$ and uniformly bounded  \BBB w.r.t.\ \EEE $j$. 
This is in contradiction with \eqref{2403201949}.
Therefore, \eqref{2502201307} is proven. 

Moreover, for every $l$ there exists a unique $i \in \N$ such that 
\begin{equation}\label{2502201659}
(\widehat{u}_j - \widehat{a}_j^{i})\xy \to \widehat{u}\xy \quad\text{in }W^{1,p}_{\mathrm{loc}}(t_l, t_{l+1}) 
\end{equation}
and in particular the above convergence is locally uniform in $(t_l, t_{l+1})$.
Since $a_j^i$ are rigid motions, we also have that 
\[
\|\dot{u}\xy\|_{L^p(K)} \leq \liminf_{j\to \infty} \| (\dot{u}_h)\xy\|_{L^p(t_l, t_{l+1})} \text{ for every compact set }K \subset (t_l, t_{l+1})\,,
\]
so 
\[
\widehat{u}\xy \in SBV^p(\Omega\xy) \text{ and } J_{\widehat{u}\xy} \subset \{t_1,\dots, t_{M_y}\}\,.
\] 
This implies, with \eqref{0101182346},  that 
\begin{equation}\label{0201181710}
\ho\big(J_{\widehat{u}\xy} \cup \partial \mathcal{P}\xy \big)= \ho\big(J_{\widehat{u}\xy} \cap  (\mathcal{P}\xy)^{(1)} \big) + \ho(\partial \mathcal{P}\xy) \leq \liminf_{m\to \infty} \Big( \ho\big(J_{(\widehat{u}_m)\xy}\big) + \varepsilon \, \mathrm{I}\xy(u_m)\Big)\,.
\end{equation}
Notice that we have expressed $J_{\widehat{u}\xy}  \cup \partial \mathcal{P}\xy$ as the disjoint union $\big(J_{\widehat{u}\xy} \cap  (\mathcal{P}\xy)^{(1)}\big) \cup \partial \mathcal{P}\xy$, denoting
\[
 (\mathcal{P}\xy)^{(1)}:=\bigcup_{j \in \N} \big((P_j)\xy\big)^{(1)}
 \] (recall  \eqref{1305201259} \EEE and that $E^{(1)}$ denotes the point where $E\subset \mathbb{R}^d$ has density 1 \BBB w.r.t.\ \EEE $\mathcal{L}^d$;  above $d=1$\EEE).

Integrating over $y\in\Pi^\xi$ and using Fatou's lemma with \eqref{0101182324} we get
\begin{equation}\label{0201181712}
\begin{split}
\int \limits_{\Pi^\xi} &\Big[\ho\big(J_{\widehat{u}\xy} \cap  (\mathcal{P}\xy)^{(1)} \big) + \ho(\partial \mathcal{P}\xy)\Big] \,\mathrm{d}\hn(y) \\&\leq \liminf_{k\to\infty} \int\limits_{\Pi^\xi}\Big[ \ho\big(J_{(\widehat{u}_k)\xy}\big) + \varepsilon \, \mathrm{I}\xy(u_k) \Big]\,\mathrm{d}\hn(y)
\end{split}
\end{equation}
for $\hn$-a.e.\ $\xi \in \Sn$.
 In particular we deduce that each $P_j$ has finite perimeter (\textit{cf.}~\cite[Remark~3.104]{AFP}) and $ \sum_{j\in \N} \hn(\partial^* P_j) < +\infty$. \BBB This confirms that $\mathcal{P}$ is a Caccioppoli partition, as claimed at the end of Step~1. \EEE

Integrating further \eqref{0201181712} over $\xi \in \Sn$, by \eqref{0101182248}, \eqref{0101182137}, and \eqref{0101182244} we get
\begin{equation}\label{0201181718}
\hn\Big(J_u \cup \bigcup_{j\in \N} \partial^* P_j \BBB \cap \Omega \EEE \Big) \leq C\,M \varepsilon + \liminf_{h\to \infty} \hn(J_{u_h})\,,
\end{equation}
for a universal constant $C$. By the arbitrariness of $\varepsilon$, 
\eqref{eq:sciSalto} follows. 
\medskip
\paragraph*{\textbf{Step~3: Proof of \eqref{eq:convGradSym}}} In order to prove \eqref{eq:convGradSym} it is enough to combine what we have proven so far with the compactness and lower semicontinuity result \cite[Theorem~11.3]{DM13} (or \cite[Theorem~1.1]{CC18}).
In fact, the sequence $(u_h-a_h)_h$ is bounded in $GSBD^p(\Omega)$: by definition of $a_h^j$ we have that
\begin{equation*}
e(u_h-a_h)=e(u_h) \,,\quad J_{u_h-a_h}\subset J_{u_h} \cup \bigcup_{j \in \N} \partial^* P_j\,,
\end{equation*}
and 
we know that
 $(u_h)_h$ is bounded in $GSBD^p(\Omega)$ and $\sum_j \hn(\partial^* P_j)<+\infty$, by \eqref{eq:sciSalto}.
Since we know that $u_h-a_h \to u$, we are allowed to apply \cite[Theorem~11.3]{DM13} (or \cite[Theorem~1.1]{CC18}, knowing that the exceptional set $A$ therein is empty). We deduce 
\begin{equation*}
e(u_h)=e(u_h-a_h) \weak e(u) \quad\text{in }L^p(\Omega;\Mnn)\,,
\end{equation*}
so \eqref{eq:convGradSym} is proven and the general proof is concluded.
\end{proof}

\begin{remark}\label{rem:0403201804}
With the notation of Theorem~\ref{thm:main}, the sequence $(u_h-a_h)_h$ is bounded in $GSBD^p(\Omega)$. This is proven in Step~3.
\end{remark}
\begin{remark}\label{rem:3103201937}
We cannot directly apply \cite[Theorem~1.1]{CC18} to $u_h-a_h^j$ for every $j$ in Step~2. Indeed, we would obtain $\hn\big((J_u \cap P_j^{(1)}) \cup \partial^* P_j \BBB \cap \Omega\EEE) \leq \liminf_{h \to \infty} \hn(J_{u_h})$, but the $j$'s are countable many and we cannot localize on right-hand side, since the $P_j$'s are not open sets.
\end{remark}

%

\section{Lower semicontinuity and minimisation}\label{Sec3}


In this section we first prove our main lower semicontinuity result, concerning a class of free discontinuity functionals with general bulk and surface energy densities. In the second part we apply Theorem~\ref{thm:minimization} to the minimisation of free discontinuity functionals with general bulk and surface energy densities.


\begin{proof}[Proof of Theorem~\ref{thm:minimization}]
Up to a subsequence, we may assume that 
\begin{equation}\label{0603201834}
\liminf_{h\to \infty} E(u_h) = \lim_{h \to \infty} E(u_h) < +\infty\,.
\end{equation}
In view of the growth assumptions on $f$ and $g$, we have that $(u_h)_h$ is bounded in $GSBD^p(\Omega)$. Thus we may apply Theorem~\ref{thm:main} to find a subsequence (not relabelled), a Caccioppoli partition $\mathcal{P}$ of $\Omega$, a sequence of piecewise infinitesimal rigid motions $(a_h)_h$, and $u \in GSBD^p(\Omega)$ satisfying \eqref{2302202219} and \eqref{eqs:0203200917}.

By \eqref{0603201834} we obtain that, up to a further subsequence,  
\[
f(x,e(u_h)) \Ln\mres \Omega + g( x \EEE, [u_h], \nu_{u_h}) \hn \mres J_{u_h} =:\mu_h \wstar \mu \quad\text{in } \mathcal{M}_b^+(\Omega)
\]
as $h\to \infty$.
Therefore, by the Besicovitch derivation theorem and the Radon-Nikodym decomposition for $\mu$ (cf.\ \cite[Theorem~2.2]{AFP}), the result will follow from the estimates
\begin{equation}\label{0203201134}
\frac{\mathrm{d} \mu}{\mathrm{d} \Ln}(x_0) \geq f\big(x_0, e(u)(x_0)\big) \quad\text{for $\Ln$- a.e.\ } x_0 \in \Omega
\end{equation}
and
\begin{equation}\label{0603201227}
\begin{split}
\frac{\mathrm{d} \mu}{\dh}(x_0) &\geq g\big( x_0, \EEE [u](x_0), \nu_u(x_0) \big) \quad\text{for }\hn\text{-a.e.\ }x_0 \in J_u \cap \mathcal{P}^{(1)}\,,\\
\frac{\mathrm{d} \mu}{\dh}(x_0) &\geq g_\infty( x_0, \EEE \nu_{\mathcal{P}}) \quad\text{for }\hn\text{-a.e.\ }x_0 \in \partial^*\mathcal{P} \BBB \cap \Omega\EEE\,.
\end{split}
\end{equation}
\medskip
\paragraph*{\textbf{Step~1: Proof of \eqref{0203201134}.}}
We divide this step into different substeps.
\medskip
\paragraph*{\textit{Substep~1.1: Choice of the blow up point $x_0$ and first properties.}}
 We pick $x_0$ in a subset of $\Omega$ of full $\Ln$-measure, satisfying the following four criteria. 
First, we notice that by the definition of Radon-Nikodym derivative and \cite[Theorem~2.2]{AFP} we have that for $\Ln$-a.e.\ $x_0 \in \Omega$ 
\begin{equation}\label{2504201018}
\frac{\mathrm{d} \mu}{\mathrm{d} \Ln}(x_0)= \lim_{\varrho\to 0^+}\frac{\mu(B_\varrho(x_0))}{\gamma_n\, \varrho^n}\,.
\end{equation} 
 Second, in \cite{CagChaSca20} it is proven that every function in $GSBD^p$ is approximately differentiable $\Ln$-a.e., namely that  for $\Ln$-a.e.\ $x_0 \in \Omega$ there exists $\nabla u(x_0) \in \mathbb{M}^{n\times n}$ (such that $e(u)(x_0)= (\nabla u(x_0))^{\mathrm{sym}}$ for a.e. $x_0$) for which it holds 
\begin{equation}\label{0203201223}
\aplim_{x\to x_0} \frac{  |u(x)-u(x_0)- \nabla u(x_0) (x-x_0)|}{|x-x_0|} =0\,.
\end{equation} 
Third, we take 
\begin{equation}\label{0403201320}
x_0 \in \mathcal{P}^{(1)}\,,
\end{equation} 
which is a set of full $\Ln$-measure in $\Omega$.
The fourth and last criterion employed in the choice of $x_0$ is based on the properties of $f$.
Since $f$ is a Carathéodory function,  arguing as in \cite[proof of Theorem~1.2]{Ebo05}, \EEE by  Scorza Dragoni  Theorem (see, e.g., \cite{EkeTem76}, p.\ 235) one deduces that there exists $F \subset \Omega$ with $\Ln(\Omega\sm F)=0$ such that for any $x_0 \in F$ there exists a compact set $K_{x_0}\subset \Omega$
(depending on $x_0$) such that 
\begin{equation}\label{0603201829}
f|_{K_{x_0} {\times} \Mnn} \quad\text{is continuous in }K_{x_0} {\times} \Mnn \quad\text{and} \quad {x_0} \in K_{x_0} \cap K_{x_0}^{(1)}\,.
\end{equation}
 Then the set of points $x_0$ satisfying \eqref{2504201018}, \eqref{0203201223}, \eqref{0403201320}, and \eqref{0603201829} is of full $\Ln$-measure in $\Omega$. Let us choose $x_0$ in this set. 

Let us fix a sequence $(\varrho_k)_k$ converging to 0 such that $\mu(\partial B_{\varrho_k}(x_0))=0$ for every $k$ (in fact this is true for any $\varrho>0$ except at most countable many). Then,  by \eqref{2504201018}    we have that 
\begin{equation}\label{0203201239}
\begin{split}
\gamma_n\,\frac{\mathrm{d} \mu}{\mathrm{d} \Ln}(x_0)&= \lim_{k\to \infty}\lim_{h\to \infty}\frac{\mu_h(B_{\varrho_k}(x_0))}{\varrho_k^n}\\
&= \lim_{k\to \infty}\lim_{h\to \infty}\frac{1}{\varrho_k^n}\Bigg\{\int_{B_{\varrho_k}(x_0)} f(x, e(u_h)(x)) \dx + \int_{J_{u_h} \cap B_{\varrho_k}(x_0)} g( x, \EEE[u_h], \nu_{u_h}) \dh  \Bigg\} \,.
\end{split}
\end{equation} 
\BBB Moreover, \eqref{2202201910} gives that $y\mapsto(u_h-a_h)(x_0+\varrho_k y)$ converge pointwise in $B_1$ to $y\mapsto u(x_0+\varrho_k y)$, and
by \eqref{0203201223}, \eqref{0403201320} it holds that $y\mapsto  \frac{u(x_0+\varrho_k y)-u(x_0)}{\varrho_k} $ converges to $y\mapsto \nabla u(x_0)\,y$ pointwise in $B_1$. 
Therefore
\begin{equation}\label{0203201224}
\lim_{k\to \infty}\lim_{h\to \infty}u_{k,h}^{x_0}\to \nabla u(x_0)\,\cdot \quad\text{in }L^0(B_1;\Rn), \quad u_{k,h}^{x_0}(y):=\frac{(u_h-a_h)(x_0+\varrho_k y)-u(x_0)}{\varrho_k}\,.
\end{equation}
\EEE
Furthermore, $\lim_{k\to \infty} \varrho_k^{-(n-1)}\hn(\partial^*\mathcal{P}\cap B_{\varrho_k}(x_0))=0$, and so 
\begin{equation}\label{0203201227}
 \lim_{k\to \infty}\lim_{h\to \infty}\frac{\hn(J_{a_h} \cap B_{\varrho_k}(x_0))}{\varrho_k^{n-1}}=0\,.
\end{equation}
\medskip

\paragraph*{\textit{Substep~1.2: Blow up argument: change of variables.}} We perform a blow up procedure in correspondence of a point $x_0 \in \Omega$ chosen as above, in order to prove \eqref{0203201134}.

Let us consider the functions $u_{k,h}^{x_0}$, defined in \eqref{0203201224}.
We notice that \eqref{0203201239} and \BBB ($g_1$) \EEE
imply that for a suitable $\widetilde{C}>0$
\begin{equation*}
\limsup_{k\to \infty}\limsup_{h\to \infty}\frac{1}{\varrho_k^n} \hn(J_{u_h} \cap B_{\varrho_k}(x_0)) \leq \widetilde{C}\,.
\end{equation*}
Together with \eqref{0203201227}, by a change of variable this gives that
\begin{equation}\label{0603202037}
\limsup_{k\to \infty} \limsup_{h\to \infty} \hn(J_{u_{k,h}^{x_0}})=0\,.
\end{equation}
Setting
\begin{equation*}
f_k(y,\xi):= f(x_0+\BBB \varrho_k \EEE y, \xi)\,,
\end{equation*} 
by \eqref{0203201239} we obtain that
\begin{equation}\label{0603202028}
\gamma_n\,\frac{\mathrm{d} \mu}{\mathrm{d} \Ln}(x_0)\geq \limsup_{k\to \infty}\limsup_{h\to \infty} \int_{B_1} f_k(y, e(u_{k,h}^{x_0})(y)) \, \mathrm{d}y \,.
\end{equation} 
Then a diagonal argument allows to define functions $v_k:= u_{k,h_k}^{x_0}$ such that \eqref{0203201224}, \eqref{0603202037}, and \eqref{0603202028} hold true for $v_k$ (and considering the limit or the $\limsup$ only in $k$).


Eventually we observe that, due to \eqref{0603201829}, 
\begin{equation}\label{0603202050}
\lim_{k\to \infty} f_k(y, \xi)=f(x_0,\xi) \quad \text{for a.e.\ }y\in B_1,\, \text{ locally uniformly in }\Mnn\,.
\end{equation}
In fact, the pointwise convergence follows from the fact that $x_0 \in K_{x_0} \cap K_{x_0}^{(1)}$, and the local uniform convergence in $\Mnn$ by the continuity of $f$ in $K_{x_0} \times \Mnn$.
\medskip
\paragraph*{\textit{Substep~1.3: Blow up argument: lower semicontinuity.}}
Let us fix $\ol \sigma \in (0,1)$.
In correspondence of $\ol \sigma$ we find positive constants $\eta(\ol\sigma)$ and  $C$ \EEE
such that the conclusion of  Theorem~\ref{th:KornGSBD} \EEE
holds. Fix also $\delta\in (0, \frac{\eta(\ol\sigma)}{\widetilde{C}})$, where $\widetilde{C}$ is the constant \BBB from the equation above \EEE \eqref{0603202037}.
We notice that, up to consider a subsequence (not relabelled), we may assume that 
\begin{equation}\label{1003201107}
\sum_{k \in \N} \hn(J_{v_k}) <  \big(  C^{-1} \EEE \wedge 1 \big) \,\delta\,.
\end{equation}
In particular, we have that $\hn(J_{v_k} \cap B_1) < \eta(\ol \sigma)$ for every $k$,
and we may apply 
\BBB Theorem~\ref{th:KornGSBD} \EEE to the functions $v_{k} \in GSBD^p(B_1)$. 
This provides functions $w_k \in GSBD^p(B_1) \cap W^{1,p}(B_{\ol\sigma};\Rn)$ and sets of finite perimeter $\omega_k \subset B_1$ such that 
\begin{equation}\label{1003201110}
w_k=v_k \text{ in } B_1 \sm \omega_k,\quad \hn(\partial^* \omega_k) < C\, \EEE \hn(J_{v_k}), 
\end{equation}
 $\hn(J_{w_k})\leq \hn(J_{v_k})$, and
\begin{equation}\label{0903200002}
\int_{B_1} |e(w_k)|^p \dx \leq  2 \EEE \int_{B_1} |e(v_k)|^p \dx\,. 
\end{equation}
By \eqref{1003201107}, \eqref{1003201110}, and the Isoperimetric Inequality
$\Ln(\omega_k) \leq \BBB \hn(\partial^* \omega_k)^{\frac{n}{n-1}}\EEE $, 
we have that
\begin{equation}\label{1003201118}
\Ln(\omega_\delta) < \delta  \qquad \text{ for } \ \omega_\delta:= \bigcup_{k\in \N}\omega_k\,, 
\end{equation} 
and
\begin{equation}\label{1003201119}
w_k=v_k \text{ in } B_1 \sm \omega_\delta \quad\text{for every }k\in \N \,.
\end{equation}
\BBB We have that $w_k$ are equibounded in $W^{1,p}(B_{\ol\sigma};\Rn)$: $w_k \in W^{1,p}(B_{\ol\sigma};\Rn)$, $\|\nabla w_k\|_{L^p(B_{\ol\sigma})}\leq C \|e(v_k)\|_{L^p(B_1)}$ by \eqref{0903200002} and Korn's Inequality, and $w_k$ pointwise converge by \eqref{0203201224}. \EEE
\BBB Moreover, by \eqref{1003201110}  we deduce that $\Ln(\{w_k \neq v_k\})\to 0$, so that \EEE 
\begin{equation*}
w_k(y) \weak \nabla u (x_0) \,y \quad\text{in }W^{1,p}(B_{\ol\sigma};\Rn)\,.
\end{equation*}

We now perform a further approximation, through a sequence of equi-Lipschitz functions. This is done for two reasons: first, to employ \eqref{0603202050} since therein the convergence holds for $\xi$ in compact sets; second, to pass to the limit in the integral of $f(x_0, e(w_k))$ over the set $B_{\ol\sigma} \sm \omega_\delta$, which is not in general open and so the semicontinuity theorem in \cite{AceFus84} does not apply directly.
Then we recall, adapting to the present case, what proven in \cite[Proposition~3.1]{Ebo05}, in the spirit of \cite{AceFus84} and \cite{Amb94}.

In correspondence to $\delta$, there exist a  Borel set $E_\delta$ with $\Ln(E_\delta)< \delta$ (this replaces the sequence $(E_k)_k$  with $\Ln(E_k)\to 0$ as $k\to \infty$ in \cite[Proposition~3.1]{Ebo05}) and for every $m$ and $k\in \N$ there exist $\widehat{w}_{k,m} \in W^{1,\infty}(B_{\ol\sigma};\Rn)$, $E_{k,m} \subset B_{\ol\sigma}$ Borel sets such that
\begin{equation}\label{0803202300}
\|\widehat{w}_{k,m}\|_{L^\infty} + \mathrm{Lip}(\widehat{w}_{k,m}) \leq C(n, B_1)m,\qquad
 \widehat{w}_{k,m}=w_k \text{ in }B_{\ol\sigma}\sm E_{k,m}\,,
 \end{equation}
and, up to extracting a subsequence \BBB w.r.t.\ \EEE $k$, $\widehat{w}_{k,m} \wstar \widehat{w}_m$ in $W^{1,\infty}(B_{\ol\sigma};\Rn)$ for every $m$, with
\begin{equation}\label{0803201858}
\begin{split}
& \lim_{m\to \infty} \limsup_{k\to \infty} \int_{E_{k,m}\sm E_\delta} \Big(1+ f_k(y, e(\widehat{w}_{k,m})) \Big) \mathrm{d}y =0\,,\\ 
 &\lim_{m\to \infty} m^p \,\Ln(A_m)=0 \,, \quad\text{for }A_m:=\{ \widehat{w}_m \neq  \nabla u(x_0) \,\cdot \} \cap (B_{\ol\sigma}\sm E_\delta)\,.
 \end{split}
\end{equation}

It follows that
\begin{equation}\label{1003201150}
\begin{split}
\int_{B_1} f_k(y, e(v_k)) \, \mathrm{d}y &\geq \int_{B_{\ol\sigma}\sm \omega_\delta} f_k(y, e(w_k)) \, \mathrm{d}y \geq  \int_{B_{\ol\sigma}\sm (E_\delta\cup \omega_\delta \cup E_{k,m})} f_k(y, e(\widehat{w}_{k,m})) \, \mathrm{d}y
\\& = \int_{B_{\ol\sigma}\sm (E_\delta\cup \omega_\delta)} f_k(y, e(\widehat{w}_{k,m})) \, \mathrm{d}y  - \int_{E_{k,m} \sm (E_\delta\cup \omega_\delta)} f_k(y, e(\widehat{w}_{k,m})) \, \mathrm{d}y
\end{split}
\end{equation}
recalling that $f_k \geq 0$, $w_k=v_k$ in $B_1 \sm \omega_\delta$, and $\widehat{w}_{k,m}=w_k$ in $B_{\ol\sigma}\sm E_{k,m}$.
We now use the fact that $\widehat{w}_{k,m} \wstar \widehat{w}_m$ in $W^{1,\infty}(B_{\ol\sigma};\Rn)$ (so that $(\widehat{w}_{k,m})_k$ is a sequence of equi-Lipschitz functions and $\big(f(x_0, e(\widehat{w}_{k,m}))\big)_k$ is equi-integrable in $B_{\ol\sigma}$) and  \eqref{0603202050}, to deduce that
\begin{equation}\label{0803201838}
\begin{split}
\liminf_{k\to \infty}  \int_{B_{\ol\sigma}\sm (E_\delta\cup \omega_\delta)} f_k(y, e(\widehat{w}_{k,m})) \, \mathrm{d}y & = \liminf_{k\to \infty}  \int_{B_{\ol\sigma}\sm (E_\delta\cup \omega_\delta)} f(x_0, e(\widehat{w}_{k,m})) \, \mathrm{d}y\,,
\\&  \geq \int_{B_{\ol\sigma}\sm (E_\delta\cup \omega_\delta)} f(x_0, e(\widehat{w}_m)) \, \mathrm{d}y\,.
 \end{split}
\end{equation}
We observe that in the equality above we used \eqref{0603202050}, and to prove the latter estimate it is enough to apply Morrey's Lower Semicontinuity Theorem in an arbitrary open set containing $B_{\ol \sigma}\sm (E_\delta\cup \omega_\delta)$ and observe that for any $\varepsilon>0$, thanks to the equi-integrability,
we can find an open set $B'_\varepsilon \supset  B_{\ol \sigma}\sm (E_\delta\cup \omega_\delta)$ such that the integrals of $f(x_0, e(\widehat{w}_{k,m}))$ (for every $k$) and $f(x_0, e(\widehat{w}_m))$ over $\BBB B'_\varepsilon \EEE \sm \big( B_{\ol \sigma}\sm (E_\delta\cup \omega_\delta) \big)$ are less than $\varepsilon$.

The second estimate in \eqref{0803201858}, \eqref{0803202300}, and $f\geq 0$ imply that
\begin{equation}\label{0803202304}
\int_{B_{\ol\sigma}\sm (E_\delta\cup \omega_\delta)} f(x_0, e(\widehat{w}_m)) \, \mathrm{d}y \geq \Ln\big(B_{\ol\sigma}\sm (E_\delta\cup \omega_\delta \cup A_m) \big) f(x_0, e(u)(x_0)) \,.
\end{equation}

Moreover, employing again $f_k \geq 0$,
\begin{equation}\label{1003201504}
\int_{E_{k,m} \sm (\omega_\delta \cup E_\delta)} f_k(y, e(\widehat{w}_{k,m})) \, \mathrm{d}y \leq \int_{E_{k,m} \sm E_\delta} f_k(y, e(\widehat{w}_{k,m})) \, \mathrm{d}y \,.
\end{equation}

Collecting \eqref{1003201150}, \eqref{0803201838}, \eqref{1003201504}, \eqref{0803202304}, \eqref{0803201858}, and passing to the $\liminf$ in $k$ and to the limit in $m$, we obtain that
\begin{equation*}
\liminf_{k\to \infty} \int_{B_1} f_k(y, e(v_k)) \, \mathrm{d}y \geq \Ln\big(B_{\ol\sigma}\sm (E_\delta\cup \omega_\delta) \big) f(x_0, e(u)(x_0))> \BBB (\gamma_n\ol\sigma^n -2\delta)\EEE f(x_0, e(u)(x_0))\,.
\end{equation*}
Passing to the limit first as $\delta\to 0$ and then as $\ol \sigma \to 1$, by \eqref{0603202028} (recall the definition $v_k=u_{k,h_k}^{x_0}$) we deduce \eqref{0203201134}.

\medskip
\paragraph*{\textbf{Step~2: Proof of \eqref{0603201227}.}} We denote  $J'_u:= (J_u \cap \mathcal{P}^{(1)}) \cup (\partial^* \mathcal{P}\BBB\cap \Omega)\EEE$.
\medskip
\paragraph*{\textit{Substep~2.1: Choice of the blow up point $x_0$ and first properties.}}
Since $J'_u$ is countably rectifiable and thanks to \eqref{0106172148},
 for $\hn$-a.e.\ $x_0 \in J'_u$ there exist $u^+(x_0)$, $u^-(x_0)\in \Rn$, $\nu_0 \in \Sn$ such that
\begin{equation}\label{1003202359}
\aplim_{\substack{x \in (Q_\varrho^{\nu_0}(x_0))^\pm\\ x\to x_0}} u(x)=u^{\pm}(x_0)\,.
\end{equation}  
Notice that 
$\nu_0$ denotes $\nu_u(x_0)$ if $x_0 \in J_u$ and the outer normal to $P_i$ at $x_0$, if $x_0 \in \partial^* \mathcal{P}\BBB \cap \Omega\EEE$ and $x_0 \in \partial^*P_i \cap \partial^*P_j$ for $i<j$.
 We remark that $u^+(x_0)\neq u^-(x_0)$ for 
 $x_0 \in J_u$. 

 Moreover, since $\mu$ is a positive bounded Radon measure and $J'_u$ is countably rectifiable, there exists the Radon-Nikodym derivative of $\mu$ \BBB w.r.t.\ \EEE $\hn\mres J'_u$ (which is $\sigma$-finite) and it holds (see e.g.\ \cite[Theorems~1.28 and 2.83]{AFP})
\begin{equation}\label{0203201213}
\frac{\mathrm{d} \mu}{\mathrm{d} \hn}(x_0)= \lim_{\varrho\to 0^+}\frac{\mu(Q^{\nu_0}_\varrho(x_0))}{\varrho^{n-1}}\quad\text{for }\hn\text{-a.e.\ } x_0 \in J'_u\,.
\end{equation}


We thus fix $x_0$ such that both \eqref{1003202359} and \eqref{0203201213}  hold for $x_0$. Moreover, let us consider the sets $D\subset \Sn$ and $N \subset \partial^* \mathcal{P}$ given by Lemma~\ref{le:2503201833} applied to $F=\partial^* \mathcal{P}\BBB \cap \Omega\EEE$, and fix $x_0 \notin N$.

Recalling the pointwise convergence of $u_h-a_h$ to $u$, by
a change of variables we obtain from \eqref{1003202359} that
\begin{equation}\label{1003202244}
 (u_h-a_h)(x_0+\varrho_k \cdot ) \to u_0:=u^+(x_0) \chi_{Q_1^{\nu_0,+}} + u^-(x_0) \chi_{Q_1^{\nu_0,-}} \quad\text{in }L^0(Q_1^{\nu_0};\Rn)
\end{equation}
as $h\to \infty$, $k\to \infty$. \BBB (Recall the notation for half cubes in Section~\ref{Sec1}.) \EEE
Analogously to \eqref{0203201239}, we fix a vanishing sequence $(\varrho_k)_k$ with $\mu(\partial Q^{\nu_0}_{\varrho_k}(x_0))=0$, and then
\begin{equation}\label{1003202145}
\begin{split}
\frac{\mathrm{d} \mu}{\mathrm{d} \hn}(x_0)&= \lim_{k\to \infty}\lim_{h\to \infty}\frac{\mu_h(Q^{\nu_0}_{\varrho_k}(x_0))}{\varrho_k^{n-1}}\\
&= \lim_{k\to \infty}\lim_{h\to \infty}\frac{1}{\varrho_k^{n-1}}\Bigg\{\int_{Q^{\nu_0}_{\varrho_k}(x_0)} f(x, e(u_h)(x)) \dx + \int_{J_{u_h} \cap Q^{\nu_0}_{\varrho_k}(x_0)} g( x, \EEE [u_h], \nu_{u_h}) \dh  \Bigg\}\,.
\end{split}
\end{equation}
\medskip
\paragraph*{\textit{Substep~2.2: Blow up argument for $x_0 \in J_u \cap \mathcal{P}^{(1)}$.}} Given $x_0 \in J_u \cap \mathcal{P}^{(1)}$, there exists $j\in \N$ such that $x_0 \in \BBB P_j^{(1)}$. \EEE Then
\begin{equation}\label{2103201324}
\lim_{h\to +\infty} |a_h - a_h^j| = 0 \quad\Ln\text{-a.e.\ in }Q_1^{\nu_0}\,,\quad 
\end{equation}
for $a_h^j$ the infinitesimal rigid motion corresponding to \BBB $P_j$, \EEE cf.\ \eqref{2302202219}. 
Up to choosing a subsequence $h_k$ in \eqref{1003202244}, by \eqref{2103201324} we get  
\begin{equation}\label{2003201714}
\widetilde{v}_k:=(u_{h_k}-a_{h_k}^j)(x_0+\varrho_k \cdot) \to u_0:=u^+(x_0) \chi_{Q_1^{\nu_0,+}} + u^-(x_0) \chi_{Q_1^{\nu_0,-}} \quad\text{in }L^0(Q_1^{\nu_0};\Rn)\,.
\end{equation}

By \eqref{1003202145}  and assumptions ($g_1$), ($g_3$) \EEE we obtain that
\begin{equation}\label{1103200009}
\begin{split}
\frac{\mathrm{d} \mu}{\mathrm{d} \hn}(x_0)&=  \lim_{k\to \infty} \Bigg\{\int_{Q_1^{\nu_0}} f(x, e(\widetilde{v}_k)(x)) \dx + \int_{J_{\widetilde{v}_k} \cap Q_1^{\nu_0}} g( x_0, \EEE [\widetilde{v}_k], \nu_{\widetilde{v}_k}) \dh \Bigg\} \,.
\end{split}
\end{equation}
We remark that above we used that $g$ does not depend separately on the two traces $v^+$ and $v^-$ but only on $[v]$. This allowed us to infer that for any function $v$ and infinitesimal rigid motion $a$ the surface part evaluated on $v$ is equal to the surface part evaluated on $v-a$.

By the growth assumptions on $f$ and $g$ it follows that $(\widetilde{v}_k)_k$ converges weakly in $GSBD^p(Q_1^{\nu_0})$ to $u_0$.

Therefore, ($g_4$) and \eqref{1103200009} imply that
\begin{equation*}
\begin{split}
g\big( x_0, \EEE [u](x_0), \nu_u(x_0)  \big)  &= \int_{J_{u_0}} g_{ x_0 \EEE}([u_0], \nu_0) \dh \\&\leq \liminf_{k\to \infty} \int_{J_{\widetilde{v}_k}\cap Q_1^{\nu_0}} g_{ x_0 \EEE}([\widetilde{v}_k], \nu_{\widetilde{v}_k}) \dh \leq \frac{\mathrm{d} \mu}{\mathrm{d} \hn}(x_0)\,.
\end{split}
\end{equation*}
\medskip
\paragraph*{\textit{Substep~2.3: Blow up argument for $x_0 \in \partial^* \mathcal{P}\BBB \cap\Omega\EEE$.}} Assume that $x_0 \in \partial^* P_i \cap \partial^* P_j$, for $i<j$.
Let us take a subsequence $h_k$ such that the convergences in \eqref{1003202244} and \eqref{1003202145} hold along $h_k$, as $k\to \infty$. Then we denote
\begin{equation*}
v_k(y):=u_{h_k}(x_0+\varrho_k y)\quad\text{for }y \in Q_1^{\nu_0}\,.
\end{equation*}
These functions satisfy (by a change of variables in \eqref{1003202145})
\begin{equation}\label{2303201131}
\frac{\mathrm{d} \mu}{\mathrm{d} \hn}(x_0)= \lim_{k\to \infty}\Bigg\{\int_{Q_1^{\nu_0}} f(x, e(v_k)(x)) \dx + \int_{J_{w_k} \cap Q_1^{\nu_0}} g( x, \EEE [v_k], \nu_{v_k}) \dh  \Bigg\}
\end{equation}
and, by \eqref{1003202244},
\begin{equation*}\label{2303201132}
 (u_{h_k}-a_{h_k})(x_0+\varrho_k \cdot) \to u_0 \quad\text{in }L^0(Q_1^{\nu_0};\Rn)
\end{equation*}
as $k\to \infty$. 
In particular, since $x_0 \in \partial^* P_i \cap \partial^* P_j$, setting $a^+_k(y):=a^j_{h_k}(x_0+\varrho_k y)$ and $a^-_k(y):=a^i_{h_k}(x_0+\varrho_k y)$ for $y \in Q_1^{\nu_0}$, we have that
\begin{equation}\label{2403201239}
\begin{split}
v_k - a^+_k=(u_{h_k}-a^j_{h_k})(x_0+\varrho_k \cdot) &\to u^+(x_0) \quad\text{in }L^0(Q_1^{\nu_0,+};\Rn)\,,
\\
v_k - a^-_k= (u_{h_k}-a^i_{h_k})(x_0+\varrho_k \cdot) &\to u^-(x_0) \quad\text{in }L^0(Q_1^{\nu_0,-};\Rn)\,.
\end{split}
\end{equation}
Moreover, in view of the choice $x_0 \notin N$ 
it holds that
$|(a_k^+-a_k^-)\cdot \xi|\to +\infty$ uniformly in $Q_1^{\nu_0}$, as $k\to \infty$, for any $\xi \in D$ (recall that $N$ and $D$ are given by Lemma~\ref{le:2503201833}).  Since 
$e(a_k^+-a_k^-)=0$, we get 
\begin{equation}\label{2603201108}
|(\widehat{a}_k^+-\widehat{a}_k^-)\xy|\equiv|(\widehat{a}_k^+-\widehat{a}_k^-)\xy(0)| \to +\infty \quad\text{ as $k\to \infty$, for any $\xi \in D$}\,.
\end{equation}
\medskip
\paragraph*{\textbf{Case $g_\infty( x_0, \EEE \nu_0) \in \R$.}}
Assume that $g_\infty( x_0, \EEE \nu_0) \in \R$,  so that $g_\infty$ takes finite values, \EEE and fix $\eta>0$ small. We find $\xi_0=\xi_0(\nu_0,\eta) \in D \subset \Sn$ such that $\xi_0$ satisfies \eqref{2502200925} and 
\begin{equation}\label{2303202056}
\bigg|g_\infty( x_0, \EEE \nu_0)-\frac{|\nu_0\cdot \xi_0|}{g^*_{ x_0, \EEE \infty}(\xi_0)}\bigg|<\eta\,,
\end{equation} \EEE
where $g^*_{ x_0, \EEE \infty}$ is the dual norm of $g_\infty( x_0, \EEE \cdot)$,  given by $\phi^*(\xi):=\sup_{\phi(\nu)\leq 1} |\nu \cdot \xi|$. \EEE 
This is done by choosing a vector $\ol\xi$ in $\Sn$ such that $g_\infty( x_0, \EEE \nu_0)=\frac{|\nu_0\cdot \ol\xi|}{g^*_{ x_0, \EEE \infty}(\ol\xi)}$
and by continuity, using that $D$ is dense in $\Sn$.


By ($g_5$) there is a function $\kappa\colon [0,+\infty) \to [0,+\infty)$ with $\lim_{t\to +\infty}\kappa(t)=0$ such that
\begin{equation*}
g( x, \EEE y, \nu) > g_\infty( x \EEE, \nu)-\kappa(t)\quad \text{for every $x\in \Omega$, $|y|>t$, and $\nu\in \Sn$,}
\end{equation*}
 and, from ($g_3$),
 $g( x, \EEE y, \nu) > g_\infty( x_0 \EEE, \nu)-\kappa(t)$ in a neighbourhood of $x_0$. \EEE
By the definition of dual norm,
($g_5$), \eqref{2303202056}, and since  $|[v_k]\cdot \xi_0| \leq |[v_k]|$, \EEE
we get
\begin{equation}\label{2303201142}
\begin{split}
g( x, \EEE [v_k], \nu_{v_k}) &\geq (g_\infty( x_0, \EEE \nu_{v_k} )-\kappa(t))
  \chi_{ \{ |[v_k]|>t\} } 
\geq \Bigg( \frac{|\nu_{v_k} \cdot \xi_0|}{g^*_{ x_0, \EEE \infty}(\xi_0)} - \kappa(t) -\eta \Bigg) \chi_{ |\{ [v_k]\cdot \xi_0|>t\} }
\end{split}
\end{equation}
 $\hn$-a.e.\ in $J_{v_k}\cap Q_1^{\nu_0}$. 
We observe that 
\begin{equation}\label{2303201954}
\limsup_{k\to \infty} \hn(J_{v_k} \cap Q_1^{\nu_0})=:L< +\infty\,,
\end{equation} 
since $g$ takes values in $[c,+\infty)$. 
Then, using also \eqref{2303201131} and \eqref{2303201142}  we obtain that
\begin{equation}\label{2303201235}
\begin{split}
\frac{\mathrm{d} \mu}{\mathrm{d} \hn}(x_0)& \geq \liminf_{k\to \infty}\Bigg\{ \int_{Q_1^{\nu_0}} \frac{|e(v_k) \xi_0 \cdot \xi_0|^p}{C} \dx + \int_{J_{v_k} \cap Q_1^{\nu_0}} \Bigg( \frac{|\nu_{v_k} \cdot \xi_0|}{g^*_{ x_0, \EEE \infty}(\xi_0)}  \chi_{ \{ |[v_k]\cdot \xi_0| \EEE>t\}} + \varepsilon \Bigg)\dh \Bigg\} \\& \hspace{1em}
-(\kappa(t)+\eta +\varepsilon)L
\\& = \liminf_{k\to+\infty} \int_{\Pi^{\xi_0}}  F_{y,t}^{\xi_0,\varepsilon}((\widehat{v}_k)\xoy) \dh(y) -(\kappa(t)+\eta +\varepsilon)L
\end{split}
\end{equation}
with
\begin{equation*}
F_{y,t}^{\xi_0,\varepsilon}(v):= \frac{1}{C}\int_{(Q_1^{\nu_0})\xoy}|v'(s)|^p \,\mathrm{d}s \ + \mathcal{H}^0(\{ s\colon  |[v](s)| \EEE >t \}) \frac{1}{g^*_{ x_0, \EEE \infty}(\xi_0)} +\varepsilon \mathcal{H}^0(J_v) \quad
\end{equation*}
for $v\colon (Q_1^{\nu_0})\xoy \to \R$.
We observe that the second relation in \eqref{2303201235} follows from the Area Formula (cf.\ e.g.\ \cite[(12.4)]{Sim84}) and the slicing property \eqref{slicingSaltoGSBD}. 
 
Fatou's lemma and \eqref{2303201235} give that $\liminf_k  F_{y,t}^{\xi_0,\varepsilon}((\widehat{v}_k)\xoy)<+\infty$ for $\hn$-a.e.\ $y \in \Pi^{\xi_0}$, so we may find, for $\hn$-a.e.\ $y \in \Pi^{\xi_0}$, a subsequence $\widehat{v}_m=\widehat{v}_{k_m}$ (depending on $y$) such that
\begin{equation}\label{2603201226}
\lim_{m\to \infty} F_{y,t}^{\xi_0,\varepsilon}((\widehat{v}_m)\xoy)= \liminf_{k\to \infty} F_{y,t}^{\xi_0,\varepsilon}((\widehat{v}_k)\xoy)\,,\quad \mathcal{H}^0\big(J_{(\widehat{v}_m)\xoy}\big)= N_y \in \N\,.
\end{equation}
Recalling \eqref{2403201239},
we may also choose the subsequence $(k_m)_m$ such that, denoting by $(\widehat{v}_m- \widehat{a}^\pm_m)\xoy$ the functions $(\widehat{v}_{k_m}- \widehat{a}^\pm_{k_m})\xoy$, it holds
\begin{equation}\label{2603201215}
(\widehat{v}_m- \widehat{a}^\pm_m)\xoy \to u^\pm(x_0) \cdot \xi_0 \quad\text{in }L^0\big((Q_1^{\nu_0,\pm})\xoy\big)\,.
\end{equation}

We now claim that there exists $\ol m \in \N$ such that 
\begin{equation}\label{2603201211}
\Big\{ s \in (Q_1^{\nu_0,\pm})\xoy \colon  |[(\widehat{v}_m)\xoy](s)| \EEE >t \Big\} \neq \emptyset \quad\text{for }m\geq \ol m\,.
\end{equation}
Indeed, let us argue by contradiction assuming that \eqref{2603201211} is not true. Then, by \eqref{2603201226},
\begin{equation}\label{2903201858}
 \mathrm{D}\big((\widehat{v}_m)\xoy\big) \big( (Q_1^{\nu_0,\pm})\xoy\big) \EEE\leq \int_{(Q_1^{\nu_0})\xoy}\Big|\big((\widehat{v}_m)\xoy\big)'(s)\Big|\, \mathrm{d}s + t\, N_y \leq \widehat{C} \,,
\end{equation}
for a suitable $\widehat{C}>0$ independent of $m$.
Therefore, for any $s^+ \in (Q_1^{\nu_0,+})\xoy$, $s^- \in (Q_1^{\nu_0,-})\xoy$,
\begin{equation*}
\begin{split}
 | \EEE(\widehat{a}_m^+-\widehat{a}_m^-)\xoy | \EEE& \equiv\EEE  | \EEE(\widehat{a}_m^+)\xoy(s^+)-(\widehat{a}_m^-)\xoy(s^-) | \EEE
\\&
= \Big| \EEE(\widehat{v}_m- \widehat{a}^-_m)\xoy(s^-) - (\widehat{v}_m- \widehat{a}^+_m)\xoy(s^+) + \Big((\widehat{v}_m)\xoy(s^+) - (\widehat{v}_m)\xoy(s^-)\Big) \Big| \EEE\,.
\end{split}
\end{equation*}
From the identity above we obtain a contradiction for $m$ large enough, since the left-hand side tends to $+\infty$ by \eqref{2603201108}
 while the right-hand side is bounded by \eqref{2603201215} and \eqref{2903201858}.
This proves \eqref{2603201211}.

In particular, \eqref{2603201211} implies that $\lim_{m\to \infty} F_{y,t}^{\xi_0,\varepsilon}((\widehat{v}_m)\xoy)\geq \frac{1}{g^*_{ x_0, \EEE \infty}(\xi_0)}$. Recalling \eqref{2603201226} and integrating in $\Pi^{\xi_0}=\Pi^{\xi_0}(Q_1^{\nu_0})$, by Fatou's lemma, \eqref{2303201235}, and the arbitrariness of $t$, $\eta$, $\varepsilon$, we get
\begin{equation}\label{2803201204}
\frac{\mathrm{d} \mu}{\mathrm{d} \hn}(x_0) \geq \frac{ | \EEE\nu_0\cdot \xi_0 | \EEE}{g^*_{ x_0, \EEE \infty}(\xi_0)}\,.
\end{equation}
The second estimate in \eqref{0603201227} follows now by \eqref{2303202056} and the arbitrariness of $\eta$.

\medskip
\paragraph*{\textbf{Case $g_\infty \equiv \EEE+\infty$.}} We have to prove (with the usual notation $\nu_0=\nu(x_0)$) that  
\begin{equation}\label{2803201153}
\hn(\partial^*\mathcal{P}\BBB \cap\Omega\EEE)=0\,.
\end{equation}
Assume by contradiction that there is $(\partial^*\mathcal{P}\BBB \cap\Omega\EEE)\sm N \neq \emptyset$ (for $N$ given by Lemma~\ref{le:2503201833}), and let $x_0 \in (\partial^*\mathcal{P}\BBB \cap\Omega\EEE)\sm N$.
Then, by the assumption ($g_5$), for any fixed large $M>0$ there exists $t_M$ such that
\begin{equation}\label{2305201643}
g(x, [v_k], \nu_{v_k}) \geq M
  \chi_{ \{ |[v_k]|>t_M\} } \geq  M\, |\nu_{v_k} \cdot \xi_0|
  \chi_{ \{ |[v_k]\cdot \xi_0|>t_M\} } 
\end{equation}
for any $\xi_0 \in D$.
Arguing as in the case $g_\infty(x_0, \nu_0)\in \R$, with \eqref{2303201142} replaced by \eqref{2305201643}, we obtain that $\frac{\mathrm{d} \mu}{\mathrm{d} \hn}(x_0) \geq M\,|\nu_0 \cdot \xi_0|$ for every $M>0$ and $\xi_0 \in D$.
Taking $\xi_0$ such that $|\nu_0 \cdot \xi_0|\BBB > \EEE\tfrac12$ we obtain a contradiction for $M>2 \, \frac{\mathrm{d} \mu}{\mathrm{d} \hn}(x_0)$.
%
%
%
%
%
 Then $\partial^*\mathcal{P}\BBB \cap\Omega\EEE\subset N$,
and \eqref{2803201153} is proven.
 
 Therefore the general proof is concluded.
\end{proof}

\begin{remark}\label{rem:symjointlyconvex}
In \cite{FriPerSol20}  a class of functions $g\colon (\Rn)^3\to [0,+\infty)$
 satisfying for any bounded open set $\Omega\subset \Rn$
\begin{equation*}
\int_{J_v} g(v^+,v^-,\nu_v)\dh\leq \liminf_{h\to \infty} \int_{J_{v_h}} g(v_h^+,v_h^-,\nu_{v_h})\dh \quad\text{if }v_h\to v \text{ weakly in }GSBD^p(\Omega)
\end{equation*}
has been provided.
This is the class of \emph{symmetric jointly convex} functions, which are characterized by  (see \cite[Definition~3.1 and Theorem~5.1]{FriPerSol20})
\begin{equation*}
g(i,j,\nu)=\sup_{h\in \N}  (f_h(i)-f_h(j)) \cdot \nu \quad\text{for all }(i,j,\nu) \in (\Rn)^3 \text{ with }i\neq j
\end{equation*}
where $f_h\colon \Rn \to \Rn$ is a uniformly continuous, bounded, and conservative vector field (that is, there exists a potential $F_h\in C^1(\Rn)$ for which $\nabla F_h=f_h$) for every $h\in\N$.
Any symmetric jointly convex function depending only on the difference $i-j$ provides a function satisfying ($g_4$). Examples of such functions are (see \cite[Section~4]{FriPerSol20})
\begin{equation*}
g_1(y,\nu)=\wt g(|y|) \quad\text{for }\wt g\colon [0,+\infty) \to [0,+\infty) \text{ increasing with }\frac{\wt g(t)}{t} \text{ nonincreasing in }(0,+\infty)\,,
\end{equation*} 
\begin{equation*}
g_2(y,\nu)= \sup_{\{\xi_k\}_{k=1}^n \text{ orthornormal basis }} \Bigg(  \sum_{k=1}^n \theta_k \big( y\cdot \xi_k \big)^2 | \nu \cdot \xi_k|^2 \Bigg)^{1/2}
\end{equation*}
for $\theta_k \in C(\R;[0,+\infty))$ even and subadditive, for $k=1,\dots,n$ (this class has been introduced and studied in a $BD$ setting in \cite{DMOrlToaACV}), and
\begin{equation*}
g_3(y,\nu)=\psi(\nu)
\end{equation*}
for $\psi$ a norm. 
\end{remark}
From Theorem~\ref{thm:minimization} we deduce existence for the following minimisation problems with Dirichlet conditions, in the  propositions below. In the following $\Omega\subset \Rn$ is a bounded, open, connected and Lipschitz \BBB set. \EEE Moreover, we assume that $u_0\in W^{1,p}(\Rn;\Rn)$ and that $\dod \subset \dom$ be relatively open with $\dod=\wt \Omega \cap \dom$ for a bounded, open, connected domain $\wt\Omega\supset \Omega$.

 We consider first the simpler cases corresponding to $g_\infty\equiv +\infty$ and $g$ independent of the jump amplitude, in \BBB Propositions~\ref{prop:minimizzazioneNormaInfinita} and \ref{prop:minimizzazioneNormaFinita}. \EEE
Then we consider the case with general $f$, $g$ as in Theorem~\ref{thm:minimization}, which formally includes the other two cases. We prefer to state three different results since the proofs of \BBB Propositions~\ref{prop:minimizzazioneNormaInfinita} and \ref{prop:minimizzazioneNormaFinita} are more direct. \EEE
 \begin{proposition}\label{prop:minimizzazioneNormaInfinita} 
 Assume $f$, $g$ as in Theorem~\ref{thm:minimization}, with $g_\infty\equiv +\infty$.
 Then the problem
 \begin{equation}\label{DirichletMinimumPb}
 \min_{u=u_0 \in \wt\Omega\sm \ol \Omega} \bigg\{\int_\Omega f(x,e(u))\dx + \int_{J_{u}} g( x, \EEE [u], \nu_u) \dh \bigg\}
 \end{equation}
 admits a solution in $GSBD^p(\wt\Omega)$. In particular, this holds for $g(y,\nu)=\wt g(|y|)$ with  $\wt g\colon [0,+\infty) \to [0,+\infty)$ increasing, unbounded, and such that $\frac{\wt g(t)}{t}$ is nonincreasing.
 \end{proposition}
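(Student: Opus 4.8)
The plan is to run the direct method on the enlarged domain $\widetilde\Omega$, using Theorem~\ref{thm:minimization} for compactness and lower semicontinuity, and then to exploit the Dirichlet constraint to eliminate the piecewise rigid motions that the theorem produces. The only point that really needs to be argued is the passage from the convergence $u_h-a_h\to u$ of Theorem~\ref{thm:minimization} to genuine convergence of the minimising sequence; this succeeds precisely because $g_\infty\equiv+\infty$.

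First I would extend $f$ to $\widetilde\Omega\times\Mnn$ by $\widehat f(x,\xi):=|\xi|^p$ for $x\in\widetilde\Omega\sm\Omega$ (and, if needed, extend $g$ to $\widetilde\Omega$ in any way preserving ($g_1$)--($g_5$)). Then $\widehat f$ is a Carathéodory function satisfying ($f_1$)--($f_3$), with $\phi$ replaced by $\phi\chi_\Omega\in L^1(\widetilde\Omega)$ and a possibly larger constant ($(f_2)$ holds since $|\cdot|^p$ is convex). Calling $v$ \emph{admissible} if $v\in GSBD^p(\widetilde\Omega)$ and $v=u_0$ in $\widetilde\Omega\sm\ol\Omega$, and setting $\widehat E(v):=\int_{\widetilde\Omega}\widehat f(x,e(v))\dx+\int_{J_v}g(x,[v],\nu_v)\dh$, one has for every admissible $v$
\[
\widehat E(v)=\int_\Omega f(x,e(v))\dx+\int_{J_v}g(x,[v],\nu_v)\dh+c_0\,,\qquad c_0:=\int_{\widetilde\Omega\sm\Omega}|e(u_0)|^p\dx\,,
\]
since $e(v)=e(u_0)$ and $\hn\big(J_v\cap(\widetilde\Omega\sm\ol\Omega)\big)=0$ there. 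In particular the infimum in \eqref{DirichletMinimumPb} is finite (take $v=u_0$). Given a minimising sequence $(u_h)_h$, we get $\sup_h\widehat E(u_h)<+\infty$, and by ($f_3$), ($g_1$) the sequence $(u_h)_h$ is bounded in $GSBD^p(\widetilde\Omega)$. Applying Theorem~\ref{thm:minimization} on $\widetilde\Omega$ with $\widehat f$ in place of $f$ yields a subsequence, a Caccioppoli partition $\mathcal P$ of $\widetilde\Omega$, piecewise rigid motions $(a_h)_h$ as in \eqref{2302202219}, and $u\in GSBD^p(\widetilde\Omega)$ with \eqref{eqs:0203200917}; since $g_\infty\equiv+\infty$, moreover $\hn(\partial^*\mathcal P\cap\widetilde\Omega)=0$ and $\widehat E(u)\le\liminf_h\widehat E(u_h)$.

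Next I would observe that $\mathcal P$ is trivial: each $P_j$ has $\hn(\partial^*P_j\cap\widetilde\Omega)=0$, so, $\widetilde\Omega$ being connected, $P_j$ is $\Ln$-equivalent to $\emptyset$ or to $\widetilde\Omega$, and as $\sum_j\Ln(P_j)=\Ln(\widetilde\Omega)$ exactly one of them, say $P_1$, is co-null. Hence $a_h$ coincides $\Ln$-a.e.\ with a single infinitesimal rigid motion $r_h:=a_h^1$, and \eqref{2202201910} becomes $u_h-r_h\to u$ $\Ln$-a.e.\ in $\widetilde\Omega$. The boundary datum now fixes the rigid motion: on $\widetilde\Omega\sm\ol\Omega$, a set of positive measure, $u_h=u_0$, hence $r_h\to u_0-u$ $\Ln$-a.e.\ there. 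By Lemma~\ref{le:0108211842}, along a further subsequence either $(r_h)_h$ converges uniformly on $\widetilde\Omega$ to an infinitesimal rigid motion $r$, or $|r_h|\to+\infty$ outside an affine subspace of dimension at most $n{-}2$, hence $\Ln$-a.e.; the second alternative contradicts the finite limit $u_0-u$, so the first holds.

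Consequently $u_h=(u_h-r_h)+r_h\to\ol u:=u+r$ $\Ln$-a.e.\ in $\widetilde\Omega$, with $\ol u\in GSBD^p(\widetilde\Omega)$, $e(\ol u)=e(u)$, $J_{\ol u}=J_u$, $[\ol u]=[u]$, $\nu_{\ol u}=\nu_u$. Passing to the limit in $u_h=u_0$ on $\widetilde\Omega\sm\ol\Omega$ shows $\ol u$ is admissible, while the rigid-motion invariance of both energy densities gives $\widehat E(\ol u)=\widehat E(u)\le\liminf_h\widehat E(u_h)$; subtracting the constant $c_0$ shows that $\ol u$ realises the infimum in \eqref{DirichletMinimumPb}, proving the first part. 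For the last assertion one checks that $g(y,\nu)=\wt g(|y|)$ fulfils ($g_1$)--($g_5$): ($g_2$) and ($g_3$) are immediate ($g$ depends only on $|y|$ and not on $x$), ($g_1$) holds since $\wt g$ is bounded below by a positive constant, ($g_4$) holds because this $g$ is symmetric jointly convex — it is exactly the example $g_1$ of Remark~\ref{rem:symjointlyconvex}, where the monotonicity of $\wt g$ and of $t\mapsto\wt g(t)/t$ enter — and ($g_5$) holds with $g_\infty\equiv+\infty$ since $\wt g$ is increasing and unbounded; Theorem~\ref{thm:minimization} then applies. I expect the step of the two previous paragraphs, recovering convergence of $(u_h)_h$ itself, to be the only non-routine part, and it is exactly there that both $g_\infty\equiv+\infty$ (triviality of $\mathcal P$) and the Dirichlet constraint (boundedness of $r_h$ via Lemma~\ref{le:0108211842}) are used.
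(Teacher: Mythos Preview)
Your proof is correct and follows essentially the same route as the paper's: apply Theorem~\ref{thm:minimization} on $\wt\Omega$, use $g_\infty\equiv+\infty$ to make the partition trivial, and then use the Dirichlet datum on $\wt\Omega\sm\ol\Omega$ to control the single surviving rigid motion. The only cosmetic difference is that the paper argues one can directly take $a_h=0$ (since $a_h$ and $0$ must lie in the same equivalence class, else $u_h-a_h$ would diverge on $\wt\Omega\sm\ol\Omega$), whereas you prove $r_h\to r$ via Lemma~\ref{le:0108211842} and then shift the limit; these are the same idea phrased differently.
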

 \begin{proof}
 Let us apply Theorem~\ref{thm:minimization} to a minimising sequence $(u_h)_h$ for \BBB \eqref{DirichletMinimumPb} \EEE in $GSBD^p(\wt\Omega)$. 
 Since $g_\infty\equiv +\infty$, \EEE we have that the partition $\mathcal{P}$ of $\wt\Omega$ is trivial. The fact that $u_h=u_0$ in $\wt\Omega \sm \ol \Omega$ \BBB implies that one can choose $a_h$ (which for every $h$ reduces to a unique infinitesimal rigid motion) as $a_h=0$: in fact, if for $a_h^i=a_h$ and $a_h^j=0$  \eqref{2202201909} holds (namely, if $a_h$ and 0 are not in the same equivalence class, as discussed in the proof of Theorem~\ref{thm:main}), then $u_h-a_h$ would diverge on $\wt\Omega\sm \ol \Omega$, in contrast to the pointwise convergence toward a finite valued function in \eqref{2202201910}. \EEE
\BBB Therefore \EEE $u_h \to u$ in $L^0(\wt\Omega;\Rn)$.
 In particular $u_h=u_0$ in $\wt\Omega \sm \ol \Omega$ and
 using again that $g_\infty\equiv +\infty$ \EEE 
  we get 
 the lower semicontinuity of the functional $E$ to minimise. 
 \end{proof}
 \begin{remark}\label{rem:0304201045}
 In the above assumptions, if $g(y,\nu)\geq \wt c |y|$ for some $\wt c >0$, the solutions to \eqref{DirichletMinimumPb} belong to $SBD^p(\wt \Omega)$. In fact, this follows from the fact that $[u] \in L^1(J_u;\Rn)$: under such condition, every $GSBD$ function is in $SBD$, as shown in \cite[Theorem~2.9]{CC19ACV} (take $\mathbb{A}v=\mathrm{E}v$ therein, cf.\ \cite[Remark~2.5]{CC19ACV}).
 For other surface densities $g$, such as $g(y,\nu)=c+\sqrt{|y|}$, one obtains existence for the Dirichlet problem in $GSBD^p$.
 \end{remark}
 
 \begin{proposition}\label{prop:minimizzazioneNormaFinita} 
 Assume $f$ as in Theorem~\ref{thm:minimization}
  and let 
  $g\colon \Omega\times \Sn \to [c,+\infty)$ be continuous in the first variable and such that $g(x, \cdot)$ is a norm for every $x\in \Omega$.
 Then the problem
 \begin{equation}\label{DirichletMinimumPb'}
 \min_{u=u_0 \in \wt\Omega\sm \ol \Omega} \bigg\{ \int_\Omega f(x,e(u))\dx + \int_{J_{u}}  g(x, \nu_u) \EEE \dh \bigg\}
 \end{equation}
 admits a solution in $GSBD^p(\wt\Omega)$. 
 \end{proposition}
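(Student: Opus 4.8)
The plan is to run a minimising sequence through Theorem~\ref{thm:minimization} and then to \emph{undo}, componentwise on the Caccioppoli partition it produces, the piecewise infinitesimal rigid motions, choosing the translations so as to recover the Dirichlet datum on $\wt\Omega\sm\ol\Omega$. First I would check that, regarded as a density $g(x,y,\nu)$ independent of $y$, the given $g$ fits the hypotheses of Theorem~\ref{thm:minimization} with $g_\infty(x,\cdot)=g(x,\cdot)$: ($g_1$)--($g_3$) and ($g_5$) are immediate (a norm on $\Sn$ is even, bounded below by a positive constant by equivalence of norms, and trivially $|y|$-independent so its ``limit'' is itself), while ($g_4$) holds because densities of the form $\psi(\nu)$ are symmetric jointly convex, see Remark~\ref{rem:symjointlyconvex}, case $g_3$. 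Then, given a minimising sequence $(u_h)_h\subset GSBD^p(\wt\Omega)$ with $u_h=u_0$ on $\wt\Omega\sm\ol\Omega$, which is bounded in $GSBD^p(\wt\Omega)$ by the growth assumptions, I would apply Theorem~\ref{thm:minimization} on $\wt\Omega$ (extending $f$ to the collar $\wt\Omega\sm\Omega$ as, e.g., $|\cdot|^p$, which only adds to the bulk part the fixed constant $\int_{\wt\Omega\sm\Omega}|e(u_0)|^p\dx$, both on the sequence and, later, on the candidate). This yields, up to a subsequence, a Caccioppoli partition $\mathcal P=(P_j)_j$ of $\wt\Omega$, piecewise infinitesimal rigid motions $a_h=\sum_j a_h^j\chi_{P_j}$, and $u\in GSBD^p(\wt\Omega)$ satisfying \eqref{2302202219}, \eqref{eqs:0203200917} and
\[
\int_\Omega f(x,e(u))\dx+\int_{J_u\cap\mathcal P^{(1)}}g(x,\nu_u)\dh+\int_{\partial^*\mathcal P\cap\wt\Omega}g(x,\nu_{\mathcal P})\dh\le\liminf_{h\to\infty}E(u_h)\,.
\]

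Next I would identify the limit of $a_h^j$ on the collar for every index $j$ with $\Ln\big(P_j\cap(\wt\Omega\sm\ol\Omega)\big)>0$. Since $a_h=a_h^j$ on $P_j$ and $u_h-a_h\to u$ $\Ln$-a.e.\ in $\wt\Omega$ by \eqref{2202201910}, while $u_h=u_0$ on $\wt\Omega\sm\ol\Omega$, one has $a_h^j\to u_0-u$ $\Ln$-a.e.\ on $P_j\cap(\wt\Omega\sm\ol\Omega)$. By Lemma~\ref{le:0108211842}, the alternative in which $|a_h^j|\to+\infty$ outside an affine subspace of dimension at most $n{-}2$ is incompatible with a.e.\ convergence on a set of positive measure; hence (after a diagonal extraction over the countably many such $j$) $a_h^j$ converges uniformly to an infinitesimal rigid motion $a^j$, and $u=u_0-a^j$ $\Ln$-a.e.\ on $P_j\cap(\wt\Omega\sm\ol\Omega)$. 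I would then set $c_j:=a^j$ for these indices and $c_j:=0$ otherwise, and define $v:=\sum_j (u+c_j)\chi_{P_j}$. Since $e(u+c_j)=e(u)$ and $\sum_j\hn(\partial^*P_j)<+\infty$, the function $v$ lies in $GSBD^p(\wt\Omega)$ with $e(v)=e(u)$ and $J_v\subset J_u\cup(\partial^*\mathcal P\cap\wt\Omega)$; moreover $v=u_0$ $\Ln$-a.e.\ on $\wt\Omega\sm\ol\Omega$, because that set equals, up to an $\Ln$-null set, the union of the pieces $P_j\cap(\wt\Omega\sm\ol\Omega)$ of positive measure, and on each of them $u+c_j=u+a^j=u_0$. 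Thus $v$ is admissible for \eqref{DirichletMinimumPb'}.

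Finally I would conclude by comparing energies. By Theorem~\ref{thm:Caccioppoli}, $\hn$-almost every point of $\wt\Omega$ belongs to $\mathcal P^{(1)}$ or to some $\partial^*P_i\cap\partial^*P_j$, so $J_v$ is, up to $\hn$-null sets, contained in the disjoint union $(J_u\cap\mathcal P^{(1)})\cup(\partial^*\mathcal P\cap\wt\Omega)$, with $\nu_v$ equal (up to sign) to $\nu_u$, respectively $\nu_{\mathcal P}$, at such points. Since $g$ is nonnegative and even in $\nu$, and recalling $e(v)=e(u)$ and (after the extension of $f$) that the collar contributes the same constant to the bulk of $v$ and of $u$,
\[
E(v)=\int_\Omega f(x,e(u))\dx+\int_{J_v}g(x,\nu_v)\dh\le\int_\Omega f(x,e(u))\dx+\int_{J_u\cap\mathcal P^{(1)}}g(x,\nu_u)\dh+\int_{\partial^*\mathcal P\cap\wt\Omega}g(x,\nu_{\mathcal P})\dh\le\liminf_{h\to\infty}E(u_h)\,.
\]
As $(u_h)_h$ is minimising, the right-hand side is the infimum in \eqref{DirichletMinimumPb'}, so $v$ is a solution in $GSBD^p(\wt\Omega)$.

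The step I expect to be the main obstacle is the middle one: ruling out divergence of $a_h^j$ on the collar $\wt\Omega\sm\ol\Omega$ and extracting from the a.e.\ convergence there the uniform limit $a^j$ (the uniform convergence is what makes the Dirichlet datum transferable to $v$); the verification that $v\in GSBD^p(\wt\Omega)$ with the stated jump set and the bookkeeping via Theorem~\ref{thm:Caccioppoli} are routine by comparison, as is the harmless extension of $f$ to $\wt\Omega\sm\Omega$.
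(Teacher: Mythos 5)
Your proof is correct and follows the same overall strategy as the paper: apply Theorem~\ref{thm:minimization} (with $g_\infty=g$, checking its hypotheses via Remark~\ref{rem:symjointlyconvex}) to a minimising sequence, transfer the Dirichlet datum to the limit via the pointwise convergence on the collar, and conclude by combining the lower-semicontinuity estimate with Theorem~\ref{thm:Caccioppoli} and the evenness of $g$. The only substantive difference is in the handling of the Dirichlet condition. The paper argues, as in Proposition~\ref{prop:minimizzazioneNormaInfinita}, that \eqref{2202201909} together with \eqref{2202201910} forces $\wt\Omega\sm\ol\Omega$ to lie inside a single partition element $P_1$, and then normalises $a_h^1=0$ so that $u$ itself equals $u_0$ on the collar and is the sought minimiser. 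You instead work piece by piece: for each $P_j$ meeting the collar on a set of positive measure, you invoke Lemma~\ref{le:0108211842} to upgrade the a.e.\ convergence $a_h^j\to u_0-u$ to uniform convergence, then correct $u$ by a piecewise constant $c=\sum_j c_j\chi_{P_j}$ to build an admissible $v$. The two routes are equivalent: the same argument (uniform convergence on two pieces meeting the collar would contradict \eqref{2202201909}) shows only one piece can meet the collar, so your $v$ is just $u$ shifted by a single rigid motion on $P_1$, matching the paper's normalisation. Using Lemma~\ref{le:0108211842} as a stated tool, rather than appealing to the equivalence-class bookkeeping inside the proof of Theorem~\ref{thm:main}, makes your version slightly more self-contained, at the price of the small extra bookkeeping of constructing $v$; also note that $e(u)=e(u_0)$ on the collar (by \eqref{eq:convGradSym} and $e(u_h)=e(u_0)$ there), which is what makes your extension of $f$ to $\wt\Omega\sm\Omega$ contribute exactly the same constant on both sides.
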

 \begin{proof}
 Let us apply Theorem~\ref{thm:minimization} to a
 minimising sequence $(u_h)_h$. 
 By \eqref{eqs:0203200917} we obtain a function $u \in GSBD^p(\wt\Omega)$, with $u=u_0$ in $\wt\Omega \sm \ol \Omega$. In fact, \BBB arguing as in the proof of Proposition~\ref{prop:minimizzazioneNormaInfinita} with \eqref{2202201909} and \eqref{2202201910}, the choice $a_h^1= 0$ is possible in the set $\wt\Omega\sm \ol\Omega$, which than has to be contained in a single element $P_1$ of the Caccioppoli partition. \EEE
 
 Moreover, 
 recalling Theorem~\ref{thm:Caccioppoli} we have that
 \begin{equation*}
 \begin{split}
 \int_\Omega f(x,e(u))\dx + \int_{J_{u}}  g(x, \nu_u) \EEE  \dh &\leq  \int_\Omega f(x,e(u))\dx + \int_{(J_{u}\cap \mathcal{P}^{(1)}) \cup (\partial^* \mathcal{P}\BBB \cap\wt\Omega\EEE)}  g(x, \nu_u) \EEE \dh
 \\& \leq \liminf_{h\to +\infty} \int_\Omega f(x,e(u_h))\dx + \int_{J_{u_h}}  g(x, \nu_{u_h}) \EEE \dh
 \\& = \inf_{u=u_0 \in \wt\Omega\sm \ol \Omega} \int_\Omega f(x,e(u))\dx + \int_{J_{u}}  g(x, \nu_u) \EEE \dh\,.
 \end{split}
 \end{equation*}
 Therefore $u$ is a solution to the problem \eqref{DirichletMinimumPb'} and this concludes the proof. \BBB We notice that by the chain of inequalities above, expressing the domain of the surface integral in the second inequality as the disjoint union  $(J_{u}\cap \mathcal{P}^{(1)}) \cup (J_u \cap \partial^* \mathcal{P})\cup \big((\partial^* \mathcal{P}\BBB \cap\wt\Omega)\sm J_u\big)=J_u \cup\big((\partial^* \mathcal{P}\BBB \cap\wt\Omega)\sm J_u\big)$, it holds 
 \[
 0=\int_{ (\partial^* \mathcal{P}\BBB \cap\wt\Omega\EEE)\sm J_{u}}  g(x, \nu_u) \EEE \dh \geq c \hn\big((\partial^* \mathcal{P}\BBB \cap\wt\Omega\EEE)\sm J_{u}\big)
 \]
 and then
 $\partial^* \mathcal{P}\BBB \cap\wt \Omega\EEE \subset J_u$, up to a $\hn$-negligible set.
 \end{proof} 
We consider now the case of general $g$.

\begin{proposition}\label{prop:1405201326}
Assume $f$, $g$ as in Theorem~\ref{thm:minimization}. Then the problem
\begin{equation*}\label{1405201336}
\min_{\substack{ u=u_0  \in \wt\Omega\sm \ol \Omega \\ \mathcal{P}\subset \wt\Omega \text{ Caccioppoli partition, } \partial^*\mathcal{P}\BBB \cap\wt\Omega\EEE\subset J_u}}  \hspace{-2em} \bigg\{ \hspace{-0.3em}\int_\Omega f(x,e(u))\dx + \int_{J_{u}\sm \partial^*\mathcal{P}} g( x, \EEE [u], \nu_u) \dh + \int_{\partial^*\mathcal{P}\BBB \cap\wt\Omega\EEE} \hspace{-1em} g_\infty( x, \EEE \nu) \dh    \bigg\}
\end{equation*}
admits a solution in $GSBD^p(\wt\Omega)$.
\end{proposition}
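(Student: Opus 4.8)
The plan is to deduce the claim from Theorem~\ref{thm:minimization}, after replacing a minimising sequence by one with bounded $E$-energy, and then to restore the structural constraint $\partial^*\mathcal P\cap\wt\Omega\subset J_u$ on the limit by a harmless piecewise-constant perturbation. If $g_\infty\equiv+\infty$, then $\int_{\partial^*\mathcal P\cap\wt\Omega}g_\infty(x,\nu)\dh$ is finite only when $\hn(\partial^*\mathcal P\cap\wt\Omega)=0$, i.e.\ $\mathcal P$ is trivial and the functional reduces to $\int_\Omega f(x,e(u))\dx+\int_{J_u}g(x,[u],\nu_u)\dh$: this case is Proposition~\ref{prop:minimizzazioneNormaInfinita}. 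Assume henceforth $g_\infty$ finite. By ($g_3$) and ($g_5$), $g_\infty$ is continuous on $\ol\Omega\times\Sn$ and a norm in $\nu$, hence $c\le g_\infty\le C_\infty$ there, and by ($g_5$) there is a modulus $\kappa$ with $\kappa(r)\to0$ as $r\to+\infty$ and $|g(x,y,\nu)-g_\infty(x,\nu)|\le\kappa(|y|)$; in particular $g\le C_\infty+1$ when $|y|\ge R_0$, for a suitable $R_0$. Write $\mathcal F$ for the functional and $m$ for its infimum over admissible pairs ($v=u_0$ on $\wt\Omega\sm\ol\Omega$, $\mathcal P$ a Caccioppoli partition with $\partial^*\mathcal P\cap\wt\Omega\subset J_v$); then $m<+\infty$, taking $v=u_0$ and $\mathcal P$ trivial, and by ($f_3$) we extend $f$ to $\wt\Omega$ so that the extra bulk term is constant on the admissible class. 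Since $f\ge\tfrac1C|\cdot|^p$, $g,g_\infty\ge c$, and $J_v=(J_v\sm\partial^*\mathcal P)\cup(\partial^*\mathcal P\cap\wt\Omega)$ disjointly for admissible pairs, any $(w_h,\mathcal P_h)$ with $\mathcal F(w_h,\mathcal P_h)\to m$ has $(w_h)_h$ bounded in $GSBD^p(\wt\Omega)$ and $\hn(\partial^*\mathcal P_h\cap\wt\Omega)$ bounded.

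Fix such $(w_h,\mathcal P_h)$. The main step is to replace $w_h$ by $u_h:=w_h+b_h$, with $b_h=\sum_j b_h^j\chi_{P_h^j}$ piecewise constant on $\mathcal P_h$, $b_h^j=0$ on the element of $\mathcal P_h$ meeting $\wt\Omega\sm\ol\Omega$, and the $b_h^j$ chosen — iteratively over the pairs $i\ne j$ with $\hn(\partial^*P_h^i\cap\partial^*P_h^j)>0$, discarding at each step an exceptional set of translations that would leave the induced jump small on a non-$\hn$-null portion of the interface, and using that $[w_h]$ is $\hn$-a.e.\ finite and already of norm $\ge R_0$ off an arbitrarily small subset — so that $|[u_h]|\ge R_h$ $\hn$-a.e.\ on $\partial^*\mathcal P_h\cap\wt\Omega$ with $R_h\to+\infty$. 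Adding a function piecewise constant on $\mathcal P_h$ changes neither $e(w_h)$ nor $[w_h]$ on $J_{w_h}\sm\partial^*\mathcal P_h$, and on $\partial^*\mathcal P_h$ the density $g_\infty$ is jump-independent; hence $\mathcal F(u_h,\mathcal P_h)=\mathcal F(w_h,\mathcal P_h)\to m$, $\partial^*\mathcal P_h\cap\wt\Omega\subset J_{u_h}$, $u_h=u_0$ on $\wt\Omega\sm\ol\Omega$, $(u_h)_h$ is bounded in $GSBD^p(\wt\Omega)$, and
\[
\bigl|E(u_h)-\mathcal F(w_h,\mathcal P_h)\bigr|=\biggl|\,\int_{\partial^*\mathcal P_h\cap\wt\Omega}\!\!\bigl(g(x,[u_h],\nu_{u_h})-g_\infty(x,\nu_{\mathcal P_h})\bigr)\dh\,\biggr|\le\kappa(R_h)\,\hn(\partial^*\mathcal P_h\cap\wt\Omega)\to0 .
\]
Since $\int_\Omega f(x,e(u_h))\dx=\int_\Omega f(x,e(w_h))\dx$ is bounded by ($f_3$), we conclude $\sup_hE(u_h)<+\infty$ and $E(u_h)\to m$.

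Apply Theorem~\ref{thm:minimization} to $(u_h)_h$: up to a subsequence there exist a Caccioppoli partition $\mathcal Q=(Q_j)_j$ of $\wt\Omega$, piecewise infinitesimal rigid motions, and $u\in GSBD^p(\wt\Omega)$ with \eqref{2302202219}, \eqref{eqs:0203200917} and
\[
\int_\Omega f(x,e(u))\dx+\int_{J_u\cap\mathcal Q^{(1)}}\!\!g(x,[u],\nu_u)\dh+\int_{\partial^*\mathcal Q\cap\wt\Omega}\!\!g_\infty(x,\nu_{\mathcal Q})\dh\le\liminf_hE(u_h)=m .
\]
By Theorem~\ref{thm:Caccioppoli}, $J_u\cap\mathcal Q^{(1)}=J_u\sm\partial^*\mathcal Q$ up to $\hn$-null sets, so the left-hand side equals $\mathcal F(u,\mathcal Q)$; and, arguing as in Proposition~\ref{prop:minimizzazioneNormaInfinita}, \eqref{2202201909}--\eqref{2202201910} and $u_h=u_0$ on $\wt\Omega\sm\ol\Omega$ force the rigid motion of the element of $\mathcal Q$ meeting $\wt\Omega\sm\ol\Omega$ to be $0$, so $u=u_0$ there. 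To restore the constraint, set $\hat u:=u+\sum_j c_j\chi_{Q_j}$ with $c_j=0$ on that element and the remaining $c_j$ chosen generically — removing only countably many $\hn$-null sets, since on each interface $\partial^*Q_i\cap\partial^*Q_j$ the map $[u]$ is constant on at most countably many positive-$\hn$ subsets — so that $[\hat u]\ne0$ $\hn$-a.e.\ on $\partial^*\mathcal Q\cap\wt\Omega$. This changes neither $e(u)$, nor $[u]$ off $\partial^*\mathcal Q$, nor the value of $\mathcal F$; hence $(\hat u,\mathcal Q)$ is admissible, $\hat u=u_0$ on $\wt\Omega\sm\ol\Omega$, $\hat u\in GSBD^p(\wt\Omega)$, and $\mathcal F(\hat u,\mathcal Q)=\mathcal F(u,\mathcal Q)\le m$, i.e.\ $(\hat u,\mathcal Q)$ is a minimiser.

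The main obstacle is the middle step — producing a minimising sequence with bounded $E$-energy — and it is exactly where ($g_5$) is used in an essential way: since $g_\infty$ ignores the jump amplitude, inflating the jump of $u_h$ across $\partial^*\mathcal P_h$ costs nothing for $\mathcal F$, and ($g_5$) then forces $g$ and $g_\infty$ to agree asymptotically there, so $\liminf_hE(u_h)=m$. The genuinely delicate technical point is the generic choice of the piecewise-constant translations when $[w_h]$ is not essentially bounded on $\partial^*\mathcal P_h$, which is dealt with by isolating the small-$\hn$-measure portion where $|[w_h]|$ is large — a portion on which $g$ is anyway comparable with $g_\infty$.
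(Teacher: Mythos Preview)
Your overall strategy coincides with the paper's: perturb a minimising sequence by piecewise constants subordinate to $\mathcal P_h$ so as to make the jump across $\partial^*\mathcal P_h$ large, apply Theorem~\ref{thm:minimization} to the perturbed sequence, and then restore the constraint $\partial^*\mathcal Q\cap\wt\Omega\subset J_{\hat u}$ by a further generic piecewise-constant shift. The last step is correct and matches the paper exactly.

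The gap is in the middle step. You assert that the constants $b_h^j$ can be chosen so that $|[u_h]|\ge R_h$ holds $\hn$-a.e.\ on $\partial^*\mathcal P_h\cap\wt\Omega$, but this is false in general: on an interface $\partial^*P_h^i\cap\partial^*P_h^j$ the new jump is $[w_h]+(b_h^i-b_h^j)$, and if the pushforward of $\hn$ under $[w_h]$ is, say, absolutely continuous with everywhere positive density, then for \emph{every} constant the set $\{|[u_h]|<R_h\}$ has positive $\hn$-measure. Your final-paragraph hedge does not repair this: you note that on the exceptional portion (where $|[w_h]|$ is large) one has $g(x,[w_h],\nu)\approx g_\infty$, but the integrand in $E(u_h)$ is $g(x,[u_h],\nu)$ with $[u_h]=[w_h]+(b_h^i-b_h^j)$, and precisely there $|[u_h]|$ may be small. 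What the paper does is the correct weaker claim: first truncate $\mathcal P_h$ to finitely many pieces $P_{h,1},\dots,P_{h,m_k^h}$ so that the remaining interfaces have total measure $<k^{-1}$, then use $\hn(\partial^*\mathcal P_h\cap\{|[w_h]|>k\})\to0$ as $k\to\infty$, and choose the constants with pairwise distance $>2k$. This yields $\hn(\partial^*\mathcal P_h\cap\{|[u_h]|<k\})\to0$ as $k\to\infty$ for each fixed $h$, and (together with a bound on $g$ for $|y|<k$) the one-sided inequality $E(u_h)\le\mathcal F(w_h,\mathcal P_h)+\tfrac1h$, which is all that is needed. Your displayed two-sided estimate $|E(u_h)-\mathcal F(w_h,\mathcal P_h)|\le\kappa(R_h)\,\hn(\partial^*\mathcal P_h)$ should be replaced accordingly.
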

\begin{proof}
Let us denote 
\begin{equation*}
F(u, \mathcal{P}):=
\begin{dcases}
\hspace{-0.2em} \int_\Omega f(x,e(u))\dx \hspace{-0.2em}+ \hspace{-0.3em}\int_{J_{u}\sm \partial^*\mathcal{P}} g( x, \EEE [u], \nu_u) \dh \hspace{-0.2em}+ \hspace{-0.3em} \int_{\partial^*\mathcal{P}\BBB \cap\wt\Omega\EEE} \hspace{-1em} g_\infty( x, \EEE \nu) \dh &  \text{if }\partial^*\mathcal{P}\BBB \cap \wt\Omega\EEE\subset J_u,   \\
 +\infty &  \text{otherwise,}
 \end{dcases}
\end{equation*}
and fix a minimising sequence $(u_h, \mathcal{P}_h)$ for $F$. \BBB To shorten the notation, in the following of the proof we write simply $\partial^*\mathcal{P}$ in place of $\partial^*\mathcal{P}\cap \wt\Omega$. \EEE
We observe that, for $\mathcal{P}_h=(P_{h,j})_j$ we can assume that $\wt\Omega \sm \ol \Omega \subset P_{h,1}$ \BBB (arguing as in the proofs of Propositions~\ref{prop:minimizzazioneNormaInfinita}, \ref{prop:minimizzazioneNormaFinita}, and we may choose 0 as the infinitesimal rigid motion in $P_{h,1}$) \EEE and find piecewise rigid functions $\wt a_h$ such that 
\begin{equation}\label{1405201712}
\wt a_h=\sum_{j\in \N} \wt a_h^j \chi_{P_{h,j}}\,,
\end{equation}
\begin{equation}\label{1405201714}
\wt a_h^1= 0,\quad|\wt a_h^j(x)-\wt a_h^i(x)| \to +\infty \quad \text{for }\Ln\text{-a.e.\ }x\in \Omega, \text{ for all }i\neq j\,,
\end{equation}
and 
\begin{equation}\label{1405201344}
 E(u_h-\wt a_h) < F(u_h, \mathcal{P}_h) + \frac{1}{h}  
\end{equation}
for every $h \in \N$. (Recall \eqref{1705201919} for the definition of $E$).
In fact, since $u_h \in GSBD^p(\wt\Omega)$ it holds that $[u_h]\colon J_{u_h} \to \Rn$ is measurable, and then there exists for every $h$ a vanishing sequence $(s^h_k)_k$ such that 
\begin{equation}\label{1505201658}
\hn(J_{u_h}\BBB \cap \EEE \{ |[u_h]|>k \}) < s^h_k\,.
\end{equation}
%
Moreover, since $\hn(\partial^*\mathcal{P}_h)<+\infty$, for every $h,\,k\in \N$ there exists $m^h_k \in \N$ 
such that 
\begin{equation}\label{1505201659}
\sum_{j>m^h_k} \hn(\partial^* P_{h,j})<k^{-1}\,.
\end{equation}
Then we choose, in correspondence of $(P_{h,j})_j$, a sequence
$(\wt a_h^j)_j\subset \R^n$ (that is, each $\wt a_h^j$ is a constant function)  with $\wt a_h^1 = 0$ (in view of the Dirichlet boundary conditions), such that $|\wt a_h^j - \wt a_h^i|> 2k$ for $i \neq j \leq m^h_k$. By \eqref{1505201658}, \eqref{1505201659}, and triangle inequality  we find that 
\[
\hn(\partial^* \mathcal{P}_h\BBB \cap \EEE \{|[u_h - \wt a_h]|<k\})< s^h_k + k^{-1}\,.
\]
This implies, in view of ($g_5$) and since $g$ is a measurable function taking finite values, that there is $\ol k\in \N$, depending on $h$, large enough so that \eqref{1405201344} holds true.

Let us now apply Theorem~\ref{thm:minimization} to the sequence $(u_h-\wt a_h)_h\subset GSBD^p(\wt\Omega)$ (that satisfies the assumptions of Theorem~\ref{thm:minimization} by \eqref{1405201344}): this provides a function $u \in GSBD^p(\wt\Omega)$ and a sequence $(\widehat{a}_h)_h$ of piecewise rigid functions corresponding to a partition $\widehat{\mathcal{P}}=(\widehat{P}_j)_j$ (in particular, $J_{\widehat{a}_h}=\partial^*\widehat{\mathcal{P}}\BBB \cap \wt\Omega$\EEE) such that 
\[
u_h -\wt a_h -\widehat{a}_h \to u \quad\Ln\text{-a.e.}
\]
and
\begin{equation}\label{1405201903}
\int_\Omega f(x,e(u))\dx + \int_{J_u \cap \widehat{\mathcal{P}}^{(1)}} g( x, \EEE  [u], \nu_u) \dh + \int_{\partial^* \widehat{\mathcal{P}}\BBB \cap \wt\Omega\EEE} g_\infty( x,  \BBB \nu_{\widehat{\mathcal{P}}}) \EEE\dh\leq \liminf_{h\to +\infty} E(u_h-\wt a_h)\,.
\end{equation}
In particular, in view of the boundary conditions we may take $\wt\Omega\sm \ol \Omega\subset\widehat{P}_1$, $\widehat{a}_h^1= 0$ and we have that  $\partial^*\widehat{\mathcal{P}} \cap \wt \Omega \subset \ol \Omega$ and $u=u_0$ in $\wt\Omega\sm \ol \Omega$.
Collecting  \eqref{1405201344}, \eqref{1405201903}, and since $(u_h, \mathcal{P}_h)_h$ is a minimising sequence for $F$, we have that 
\begin{equation}\label{1405201922}
\int_\Omega f(x,e(u))\dx + \int_{J_u \cap \widehat{\mathcal{P}}^{(1)}} g( x, \EEE [u], \nu_u) \dh + \int_{\partial^* \widehat{\mathcal{P}}\BBB \cap \wt\Omega\EEE} g_\infty( x,  \BBB \nu_{\widehat{\mathcal{P}}}) \EEE \dh \leq\inf_{v,\mathcal{P}} F(v, \mathcal{P})\,.
\end{equation}
We notice now that we can find a piecewise rigid function $\wt a$ with $\wt a=0$ in $\wt\Omega\sm \ol \Omega$ and $J_{\wt a}\subset \partial^* \widehat{\mathcal{P}}\BBB \cap \wt\Omega\EEE$ for which $\partial^* \widehat{\mathcal{P}}\BBB \cap \wt\Omega\EEE\subset J_{u-\wt a}$. This follow from the fact that there are at most countable many $s \in \Rn$ such that $\hn(\partial^* \widehat{\mathcal{P}} \cap \{[u]=s\})>0$.
Moreover, since $\partial^* \widehat{\mathcal{P}}\BBB \cap \wt\Omega\EEE\subset J_{u-\wt a}$ (and in view of the fact that $g$ depends only on the jump amplitude, cf.\ below \eqref{1103200009}), we have that for $\widehat{u}=u-\wt a$
\begin{equation}\label{1405201923}
\int_\Omega f(x,e(u))\dx + \int_{J_u \cap \widehat{\mathcal{P}}^{(1)}} g( x, \EEE [u], \nu_u) \dh = \int_\Omega f(x,e(\widehat u))\dx + \int_{J_{\widehat u} \cap \widehat{\mathcal{P}}^{(1)}} g( x, \EEE [\widehat u ], \nu_{\widehat u}) \dh\,.
\end{equation}
Therefore, in view of the fact that  $\partial^* \widehat{\mathcal{P}}\BBB \cap \wt\Omega\EEE\subset J_{\widehat{u}}$,  by \eqref{1405201922} and \eqref{1405201923} we get that $(\widehat{u}, \widehat{\mathcal{P}})$ is a minimiser for $F$. This concludes the proof.
\end{proof}
\begin{remark}\label{1505202003}
The minimisation problem in Proposition~\ref{prop:1405201326} formally reduces to those one in Proposition~\ref{prop:minimizzazioneNormaInfinita} and \ref{prop:minimizzazioneNormaFinita} noticing that $\mathcal{P}=\{\wt\Omega\}$ when $g_\infty\equiv +\infty$ (cf.\ Theorem~\ref{thm:minimization})
and the functional in Proposition~\ref{prop:1405201326} does not depend on $\mathcal{P}$ when $g$ depends only on $\nu$ and coincides with $g_\infty$.
\end{remark}
\BBB \begin{remark}\label{rem:2608211805}
In the minimisation problem in Proposition~\ref{prop:1405201326} the restriction $\partial^*\mathcal{P}\BBB \cap\wt\Omega\EEE\subset J_u$ may be dropped. The mechanical interpretation for including this condition is to regard $\partial^*\mathcal{P}$ as part of the discontinuity set of the displacement, where the fracture is present, since also in $\partial^*\mathcal{P}$ the material can be interpreted as fractured.
\end{remark}
 \EEE

\bigskip
\noindent {\bf Acknowledgements.} The authors are grateful to the anonymous referees for their valuable and helpful comments, that permitted to greatly improve the paper.
For this project VC has received funding from the European Union’s Horizon 2020 research and innovation programme under the Marie Skłodowska-Curie grant agreement No. 793018. Most of this work was done while the authors were
still working in CMAP, CNRS and Ecole Polytechnique, Institut Polytechnique de
Paris, Palaiseau, France.

%

\Addresses

\end{document}